\def\H1{{_0}H^1(\Omega)}
\def\k{\mathcal{K}}
\def\N{\mathbb{N}}
\def\Q{\mathbb{Q}}
\def\R{\mathbb{R}}
\def\Xint#1{\mathchoice
{\XXint\displaystyle\textstyle{#1}}%
{\XXint\textstyle\scriptstyle{#1}}%
{\XXint\scriptstyle\scriptscriptstyle{#1}}%
{\XXint\scriptscriptstyle\scriptscriptstyle{#1}}%
\!\int}
\def\XXint#1#2#3{{\setbox0=\hbox{$#1{#2#3}{\int}$ }
\vcenter{\hbox{$#2#3$ }}\kern-.6\wd0}}
\def\intbar{\Xint-}
\DeclareMathOperator*{\essinf}{ess\,inf}
\newtheorem{lem}{Lemma}[section]
\newtheorem{prop}[lem]{Proposition}
\newtheorem{thm}[lem]{Theorem}
\newtheorem{remark}[lem]{Remark}
\newtheorem*{prop*}{Proposition}
\newtheorem*{thm*}{Theorem}
\newtheorem*{def*}{Definition}
\newtheorem*{lem*}{Lemma}
\newcommand{\sm}{\setminus}
\newcommand{\ov}{\overline}
\newcommand{\wkto}{\rightharpoonup}
\numberwithin{equation}{section}
\title{On $\Gamma-$Convergence of a Variational Model for Lithium-Ion Batteries}
\author{Kerrek Stinson \\  kstinson@andrew.cmu.edu \\ Carnegie Mellon University}
\begin{document}
\pagestyle{plain}
\setcounter{page}{1}
 
\begin{titlingpage}
    \maketitle
    \begin{abstract}
        A singularly perturbed phase field model used to model lithium-ion batteries including chemical and elastic effects is considered. The underlying energy is given by
        $$I_\epsilon [u,c ] := \int_{\Omega}\left(\frac{1}{\epsilon}f(c)+\epsilon\|\nabla c\|^2+\frac{1}{\epsilon}\mathbb{C}(e(u)-ce_0):(e(u)-ce_0)\right) dx, $$
        where $f$ is a double well potential, $\mathbb{C}$ is a symmetric positive definite fourth order tensor, $c$ is the normalized lithium-ion density, and $u$ is the material displacement. The integrand contains elements close to those in energy functionals arising in both the theory of fluid-fluid and solid-solid phase transitions. For a strictly star-shaped, Lipschitz domain $\Omega \subset \R^2,$ it is proven that $\Gamma - \lim_{\epsilon\to 0} I_\epsilon = I_0,$ where $I_0$ is finite only for pairs $(u,c)$ such that $f(c) = 0$ and the symmetrized gradient $e(u) = ce_0$ almost everywhere. Furthermore, $I_0$ is characterized as the integral of an anisotropic interfacial energy density over sharp interfaces given by the jumpset of $c.$
    \end{abstract}
 \noindent    \textbf{Key words:} Gamma convergence, lithium-ion batteries, linear elasticity
    
  \noindent   \textbf{AMS Classifications:} 74G65, 49J45, 74N99
\end{titlingpage}

\newpage

\section{Introduction}
The lithium-ion battery is a fundamental tool in modern technology and the intertwined challenge of harnessing renewable energy, with applications extending from mobile phones to hybrid cars. In recognition of this importance, the 2019 Nobel Prize in Chemistry was awarded to Goodenough, Whittingham, and Yoshino for their pioneering works in the development of lithium-ion batteries \cite{nobelPrize}. Motivated by the eminence of lithium-ion batteries, we study a mathematical model that underlies their capacity. A prominent performance limitation of lithium-ion batteries is their short life-cycle resulting from the electrochemical processes governing the battery which induce phase transitions. Elaborating on this, during the process of charging, lithium-ions intercalate into the host structure of the cathode. This intercalation is not homogeneous and undergoes phase separation, that is, lithium-ions form areas of high concentration and low concentration with sharp phase transitions between these regions. These phase transitions induce a strain on the host material which, ultimately, leads to its degradation. Damage of the cathode's host material leads to a decrease in battery performance and limited life-cycle (see \cite{Bazant-Theory2013}, \cite{Dal2015-Comp}, and references therein).

Understanding the onset of phase transitions is, therefore, imperative to improving battery performance, and much work has been done in this direction. Contemporary paradigms for modeling lithium-ion batteries are moving towards the incorporation of phase field models, also known as diffuse interface models (see, e.g., \cite{singh2008intercalation}, \cite{cogswell2012coherency}, \cite{bai2011suppression}, \cite{Balluffi2005KineticsOM}, \cite{Nauman2001NonlinearDA}). These phase field models are governed by global energy functionals, which have regular inputs (e.g. Sobolev functions). As noted in \cite{Bazant-Theory2013}, the phase field field model is robust, allowing for electrochemically consistent models for the time evolution of lithium-ion batteries. Competing models include the shrinking core model and the sharp interface model; however, as noted in Burch et. al. \cite{Burch-PhaseTransformation2008}, the shrinking core model fails to capture fundamental qualitative behavior. Furthermore, in \cite{HanVanDerVen2004} it is proposed that the phase field model may provide a more accurate numerical analysis of the problem than the sharp interface model, which seeks to model the evolution of the phase boundary as a free boundary problem (see \cite{caginalp1991}; see also \cite{Acharya-msThesis}, and references therein, for benefits of the phase field model).

In this paper we study a variational model introduced by Cogswell and Bazant in \cite{cogswell2012coherency} (see also \cite{Bazant-Theory2013}, \cite{Bazant-PhaseSepDyn2014}, \cite{singh2008intercalation}, \cite{burch2009size}). For a fixed domain $\Omega\subset \R^2$, we consider a phase field model for which the free energy functional is given by \begin{align*}
I &[u,c,\Omega] :=  \int_{\Omega}\left( \bar f(c)+\rho\|\nabla c\|^2+\mathbb{C}(e(u)-ce_0):(e(u)-ce_0) \right) dz
\end{align*} with 
\begin{equation} \label{chemPotFunc}
\bar f(s)  := \omega s(1-s)+KT(s\log(s) +(1-s)\log(1-s)),  \ \ \ \ \ s\in [0,1].
\end{equation} 
Here $c:\Omega \to [0,1]$ stands for the normalized density of lithium-ions, and $u:\Omega \to \R^2$ represents the material displacement with symmetrized gradient $e(u):= \frac{\nabla u +\nabla u^T}{2}$, $\omega\in \R$ is a regular solution parameter (enthalpy of mixing), $e_0\in \R^{2\times 2}$ is the lattice misfit, $K>0$ is the Boltzman constant, $T>0$ is the absolute temperature, $\rho>0$ is a constant associated with interfacial energy scaling with interface width (see \cite{BallJames-FinePhase1987}, \cite{kohn1994-surface}, \cite{Muller1999-VariationalLecture}, \cite{bhattacharya-martensiteBook}, and references therein), and $\mathbb{C}$ is a symmetric, positive definite, fourth order tensor, that captures the material constants (stiffness). Note the tensor $\mathbb{C}$ is defined to be positive definite as follows
\begin{align} \label{coercivity}
\mathbb{C}:\R^{2\times 2}\to \R^{2\times 2}_{\rm{sym}}, \quad   \mathbb{C}( \xi):\xi >0 \text{ for all } \xi \in \R^{2\times 2}_{\rm{sym}} \text{ with } \xi \neq 0.
\end{align}

Adding a constant and letting $\rho  := \epsilon^2$, we rescale the functional by $1/\epsilon$ to consider the collection of functionals $\{I_\epsilon\}_{\epsilon>0}$ on $H^1(\Omega,\R^2)\times L^2(\Omega,[0,1])$ defined as  
\begin{equation}\label{energyFunc}
\begin{aligned}
 & I_{ \epsilon} [u,c,\Omega] := \\
& \begin{cases} 
 \int_{\Omega}\left (\frac{1}{\epsilon}f(c)+\epsilon\|\nabla c\|^2+\frac{1}{\epsilon}\mathbb{C}(e(u)-ce_0):(e(u)-ce_0) \right) dz & (u,c)\in H^1(\Omega,\R^2)\times H^1(\Omega,[0,1]), \\
 \infty & \text{otherwise,}
 \end{cases}
 \end{aligned}
\end{equation} where 
\begin{equation}\label{wellFunc}
f(s) := \bar f(s)-\min_{t\in [0,1]} \bar f(t), \ \ \ \ \ s\in [0,1]
\end{equation} 
is a well function. We wish to consider the asymptotic behavior of this collection of energies as $\epsilon\to 0$ (i.e., when the interfacial width goes to $0$). This analysis will, in some capacity, mathematically validate the numerical solutions witnessing phase separation for small interfacial widths as seen by Bazant and Cogswell in \cite{cogswell2012coherency}. 

To study the asymptotic behavior, we will use the notion of $\Gamma-$convergence, as introduced by De Giorgi in \cite{DeGiorgi-GammaConv}. $\Gamma-$convergence was first used by Modica and Mortola in \cite{ModicaMortola} to study the class of functionals arising in the Cahn-Hilliard theory of fluid-fluid transitions given by $$E_\epsilon[c,\Omega] := \int_\Omega \left( \frac{1}{\epsilon}W(c) + \epsilon\|\nabla c\|^2 \right) dz, \ \ \ c\in H^1(\Omega,\R),$$ where $W$ is a double well function and $\Omega\subset \R^N$ (see also the foundational work by Cahn and Hilliard \cite{CahnHilliard}). Herein, they showed that $\Gamma - \lim_{\epsilon\to 0} E_\epsilon = E_0,$ where $E_0(c) := C\text{Per}_\Omega(c),$ with $\text{Per}_\Omega(c)$, the perimeter in $\Omega$ of one of the phases of $c$, taken to be $\infty$ if $c$ is not of finite perimeter. See also \cite{FonsecaTartar1989}, \cite{barroso1994-Anisotropic}, \cite{Ambrosio1990-metric}, and references therein.

More recently, a variety of work has been directed at analyzing classes of functionals given by 
\begin{equation}\label{csEnergyFunc}
F_\epsilon[u,\Omega] := \int_\Omega \left( \frac{1}{\epsilon}W(\nabla u) + \epsilon\|\nabla^2 u\|^2 \right)  dz, \ \ \ u\in H^2(\Omega,\R^N),
\end{equation} with $\Omega\subset \R^N,$ which arise in the theory of solid-solid phase transitions \cite{bhattacharya-martensiteBook}. Accounting for frame indifference in a geometrically nonlinear framework, it is necessary to consider $W$ satisfying the well condition $W(G) = 0$ if and only if $G\in \text{SO}(N)A\cup\text{SO}(N)B$ for matrices $A,B\in \R^{N\times  N},$ where $SO(N)$ is the special orthogonal group. To guarantee existence of nonaffine functions for which the limiting energy is finite, the wells must satisfy Hadamard's rank-one compatibility condition given by $QA-B = a\otimes \nu$ for some $Q\in \text{SO}(N),$ and $a,\nu \in \R^N$ (see \cite{BallJames-FinePhase1987}, \cite{Dolzmann1995-microstructure}). As an initial step in \cite{ContiFonsecaLeoni-gammConv2grad}, Conti et. al. treat the case of a double well function $W$ disregarding frame indifference, meaning $W(G)=0$ if and only if $G=A$ or $G=B$, concluding that $\{F_{\epsilon}\}_{\epsilon>0}$ $\Gamma-$converges to a functional reminiscent of $F_0$ defined in (\ref{def:f0func}). Convergence of a case intermediate to $E_\epsilon$ and $F_\epsilon$ is considered by Fonseca and Mantegazza \cite{FonsecaMantegazza-SecondOrder2000} wherein the nonconvex integrand of $F_\epsilon$ is replaced by $\frac{1}{\epsilon}W(u).$ Many promising results regarding convergence of $F_\epsilon$ when it is the Eikonal functional, that is $W(G) := (1-\|G\|^2)^2$, have been obtained, although the $\Gamma-$limit is still yet to be identified (see \cite{DeLellis2003-structure}, \cite{desimone_muller_kohn_otto_2001}). 

Restricted to a strictly star-shaped Lipschitz domain $\Omega\subset \R^2$, Conti and Schweizer in \cite{ContiSchweizer-Linear} address the problem of frame indifference in a geometrically linear framework, that is when $W$ is invariant under the tangent space of $\text{SO}(2)$ or, equivalently, satisfies the well condition $W(G) = 0$ if and only if $\frac{G+G^T}{2} \in \{A\}\cup \{B\}$. Conti and Schweizer conclude that the functionals $\{F_\epsilon\}_{\epsilon>0}$ $\Gamma-$converge to 
\begin{equation}\label{def:f0func}
F_0[u,\Omega] := \begin{cases}
\int_{J_{e(u)}} k(\nu)\ d\mathcal{H}^1 & \text{ if } e(u)\in BV(\Omega,\{A,B\}), \\
\infty & \text{ otherwise,}
\end{cases}
\end{equation} 
where $J_{e(u)}$ is the associated jumpset with normal $\nu$, and $k(\nu)$ is the effective anisotropic interfacial energy density. Again, the existence of displacement with non-constant symmetrized gradient exactly on the two wells requires a rank-one connectivity property. To be precise, there is some skew-symmetric matrix $S$ such that $A-B+S$ is rank one (see Proposition \ref{prop:detcond}). Furthermore, the condition that $e(u)\in BV(\Omega,\{A,B\})$ forces considerable restriction on the functions for which $F_0[u]<\infty.$ Specifically, each interface of $J_{e(u)}$ has a single normal (out of two choices) and extends to the boundary of $\Omega$. Consequently $u$ behaves like a laminate (see Theorem \ref{struct}).

Furthermore in \cite{conti-SO2rigid}, with $N=2,$ Conti and Schweizer analyze the case of a geometrically nonlinear framework with a result analogous to the linear case. In order to extend this result to higher dimensions, in \cite{davoli2018twowell} Davoli and Friedrich analyze the energy 
$$\int_\Omega \left( \frac{1}{\epsilon}W(\nabla u) + \epsilon\|\nabla^2 u\|^2  + \eta(\epsilon)(\|\nabla^2 u\|^2 - |\partial_{N}^2 u|^2) \right)  dz, \ \ \ u\in H^2(\Omega,\R^N) $$
utilizing sophisticated rigidity results for incompatible vector fields (see \cite{Muller-IncompatGeoRigidity}, \cite{Chambolle-piecewiseRigid}, \cite{lauteri2017geometric}). Here, it is assumed that the two wells of $W$ are $0$ and $SO(N)e_N\otimes e_N$. Furthermore, the last term in the energy specifically penalizes change in the displacement orthogonal to $e_N$, and it follows that there is a single relevant interfacial normal, $e_N$. This is in contrast to the two possible interfacial normals that arise in the analysis of $\{F_\epsilon\}_{\epsilon>0}$ (see Theorem \ref{struct}). Here $\eta (\epsilon)\to \infty$ as $\epsilon\to 0$, leaving the identification of the $\Gamma-$limit of $\{F_\epsilon\}_{\epsilon>0}$ in arbitrary dimensions an open problem. 

Looking towards applications to fracture mechanics, Bellettini et. al. \cite{bellettini2013-gammaLim} analyze $\Gamma-$convergence of the energy functionals
$$
\int_\Omega \left( \frac{1}{\epsilon \phi(1/\epsilon)}\phi(\|\nabla u\|) + \epsilon^3 \|\nabla^2 u\|^2 \right) dz  , \ \ \ u\in H^2(\Omega,\R^N)
 $$ where $\phi :[0,\infty)\to [0,\infty)$ is continuous, nondecreasing, has sublinear growth at infinity, and satisfies $\phi^{-1}(\{0\}) = \{0\}$. As noted by the authors, this energy may viewed as a special case of (\ref{csEnergyFunc}) where the wells of $W$ are at $0$ and $\infty.$ 

The integrand in the energy $I_\epsilon$ bears clear similarities to the integrands of both functionals $E_\epsilon$ and $F_\epsilon$. In our analysis of the $\Gamma-$convergence of the functionals $I_\epsilon,$ we will use many of the ideas put forth in the $\Gamma-$convergence analyses of both $E_\epsilon$ by Modica and Mortola in \cite{ModicaMortola} and $F_\epsilon$ by Conti and Schweizer in \cite{ContiSchweizer-Linear}. 

We now introduce some terminology allowing us to state the main results of this paper. Let $\mu_0 \in (0,1)$ and $\mu_1 = 1-\mu_0 \in (0,1)$ be the two wells of $f$ (see Proposition \ref{prop:ffunc}).  
In view of Remark \ref{rmk:symAssump} and Proposition \ref{prop:detcond}, we assume that 
\begin{equation} \label{detcond} \text{det}(e_0)\leq 0, \quad \quad  e_0\in \R^{2\times 2}_{\rm{sym}},
\end{equation}
and consequently there are one or two choices (up to sign) of $\nu \in S^1$ such that 
\begin{equation} \label{R1connection}
  S_\nu := a \otimes \nu - (\mu_1-\mu_0)e_0
  \end{equation} is skew symmetric for some $a\in \R^2$ (see Section \ref{secmathprelim}).
Letting $\Q_\nu$ be a unit square in $\R^2$ centered at the origin with two sides parallel to $\nu$, we define the following interfacial energy density
\begin{equation}\label{interfaceenergyconst} 
\begin{aligned}
\k(\nu) := \inf\{\liminf\limits_{i\to \infty}I_{\epsilon_i}[u_i,c_i,\Q_\nu]:\epsilon_i\to 0,  u_i\in &  H^1(\Q_\nu,\R^2),  u_i\to \bar u_\nu \text{ in }H^1(\Q_\nu,\R^2),   \\
& c_i\in H^1(\Q_\nu,[0,1]), c_i\to \bar c_\nu \text{ in } L^2(\Q_\nu) \}, 
\end{aligned}
\end{equation} with
\begin{equation} \label{unucnu}
\begin{aligned}
&\bar u_\nu(x,y)  := \begin{cases} 
      \mu_0 e_0(x,y)^T &  \text{ if }(x,y)\cdot \nu<0,\\
      (\mu_1e_0+S_\nu)(x,y)^T & \text{ if } (x,y)\cdot \nu>0,
   \end{cases}\\
   &\bar c_\nu(x,y)  := \begin{cases} 
      \mu_0 & \text{ if } (x,y)\cdot \nu<0,\\
      \mu_1 & \text{ if } (x,y)\cdot \nu>0.
   \end{cases}
\end{aligned} 
\end{equation}
Note that $\bar u_\nu$ is Lipschitz by virtue of (\ref{R1connection}). With these definitions in hand, we now state the main results of this paper:

\begin{thm} \label{maintheorem}
Let $\Omega\subset \R^2$ be an open, bounded, star-shaped domain with Lipschitz continuous boundary, and assume that (\ref{coercivity}) and (\ref{detcond}) hold. Considering the strong topology of $H^1(\Omega,\R^2)\times L^2(\Omega,[0,1]),$ we have $$\Gamma - \lim_{\epsilon\to 0} I_{\epsilon} = I_0,$$ where $I_\epsilon$ is defined in (\ref{energyFunc}), and 
\begin{equation}\label{energyFuncGLim}
\begin{aligned}
 I_0[u,c,\Omega] :=  \begin{cases} 
      \int_{J_{c}} \k(\nu) \ d\mathcal{H}^1 &  c\in BV(\Omega;\{\mu_0,\mu_1\}),\ u\in H^1(\Omega;\R^2), \ \ e(u) = ce_0,\\
    \infty & \text{otherwise},
   \end{cases}
  \end{aligned}
  \end{equation}
where $J_c$ is the jumpset for $c$ with normal $\nu$, and $\mu_0$ and $\mu_1$ are the wells of $f$ (see (\ref{wellFunc})).
\end{thm}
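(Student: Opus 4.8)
The plan is to establish the $\Gamma$-convergence in the two standard steps: the liminf inequality (lower bound) and the construction of a recovery sequence (upper bound), both relative to the strong topology of $H^1(\Omega;\R^2)\times L^2(\Omega;[0,1])$. Throughout, the guiding principle is that the integrand of $I_\epsilon$ decouples, up to localization, into a Modica--Mortola-type piece $\frac{1}{\epsilon}f(c)+\epsilon\|\nabla c\|^2$ controlling the phase variable $c$, and an elastic piece $\frac{1}{\epsilon}\mathbb{C}(e(u)-ce_0):(e(u)-ce_0)$ that, once $c$ is essentially $\{\mu_0,\mu_1\}$-valued, becomes a penalization forcing $e(u)=ce_0$ and selecting the admissible normals via (\ref{detcond})--(\ref{R1connection}). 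I expect to rely on the compactness and structure results already invoked in the excerpt (Theorems \ref{thm:compactness} and \ref{struct}) to pass from sequences of bounded energy to limits of the form described in (\ref{energyFuncGLim}).

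For the \textbf{liminf inequality}, I would take $(u_\epsilon,c_\epsilon)\to(u,c)$ with $\liminf_\epsilon I_\epsilon[u_\epsilon,c_\epsilon,\Omega]<\infty$; passing to a subsequence realizing the liminf, the energy bound gives $\int_\Omega f(c_\epsilon)\to 0$ (so $f(c)=0$ a.e., i.e.\ $c\in\{\mu_0,\mu_1\}$ a.e.) and $\int_\Omega \mathbb{C}(e(u_\epsilon)-c_\epsilon e_0):(e(u_\epsilon)-c_\epsilon e_0)\to 0$, which with (\ref{coercivity}) and lower semicontinuity of the (convex) elastic energy forces $e(u)=ce_0$. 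Compactness (Theorem \ref{thm:compactness}) then yields $c\in BV(\Omega;\{\mu_0,\mu_1\})$. To obtain the sharp constant $\int_{J_c}\k(\nu)\,d\mathcal{H}^1$, I would use the blow-up method of Fonseca--M\"uller: define the sequence of measures $\lambda_\epsilon:=I_\epsilon[u_\epsilon,c_\epsilon,\,\cdot\,]\,\llcorner\,\Omega$, extract a weak-$*$ limit $\lambda$, and show for $\mathcal{H}^1$-a.e.\ $x_0\in J_c$ that $\frac{d\lambda}{d\mathcal{H}^1\llcorner J_c}(x_0)\ge \k(\nu(x_0))$ by rescaling around $x_0$, using that the blow-ups of $(u_\epsilon,c_\epsilon)$ converge to the planar profile $(\bar u_{\nu(x_0)},\bar c_{\nu(x_0)})$ on the unit cube $\Q_{\nu(x_0)}$ and that $\k(\nu)$ is precisely the infimum of rescaled energies over such converging sequences; a covering argument then gives $\lambda(\Omega)\ge\int_{J_c}\k(\nu)\,d\mathcal{H}^1$. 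The delicate point is matching boundary data on the blow-up cubes, which is handled by the usual cut-off/slicing modifications near $\partial\Q_\nu$ together with the continuity properties of $\k$.

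For the \textbf{upper bound (recovery sequence)}, given admissible $(u,c)$ with $I_0[u,c,\Omega]<\infty$, the structure theorem (Theorem \ref{struct}) tells us the interfaces of $c$ are finitely many line segments crossing $\Omega$, each with one of the admissible normals $\nu$, and $u$ is a corresponding laminate-type displacement. I would first treat a single planar interface: by definition of $\k(\nu)$ there exist $\epsilon_i\to 0$ and $(u_i,c_i)\to(\bar u_\nu,\bar c_\nu)$ on $\Q_\nu$ with $I_{\epsilon_i}[u_i,c_i,\Q_\nu]\to\k(\nu)$; a scaling and periodic-tiling argument extends this to an interface of arbitrary finite length with asymptotically the same cost per unit length. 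Then, using that $\Omega$ is strictly star-shaped and Lipschitz, I would patch together the profiles associated with each of the finitely many interface segments, gluing across the constant-phase regions where $c_\epsilon\equiv c$ and $u_\epsilon$ is exactly the affine displacement (so the gluing region contributes zero energy in the limit), and controlling the overlap/transition zones so their energy is $o(1)$. The star-shapedness is what allows the competitor fields built near the interfaces to be defined consistently on all of $\Omega$ (via dilation from the star center) without introducing spurious jumps or incompatibilities at the boundary.

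The \textbf{main obstacle} is the interplay between the elastic compatibility constraint $e(u)=ce_0$ and the geometry: unlike the pure Modica--Mortola setting, one cannot perturb $c$ near an interface freely, because changing $c$ forces a matching change in $e(u)$, and $e(u)$ must remain the symmetrized gradient of an actual $H^1$ field. Concretely, in the recovery-sequence construction the transition profiles for $u$ and $c$ must be chosen jointly so that $e(u_\epsilon)-c_\epsilon e_0$ is small in $L^2$; this is exactly why the optimal constant is the coupled quantity $\k(\nu)$ rather than a product of separate chemical and elastic contributions, and why only the normals satisfying (\ref{R1connection}) are admissible. Ensuring that the glued competitor remains a genuine gradient field across the patching regions -- and that the global displacement converges strongly in $H^1$, not merely weakly -- will require careful use of the rank-one structure (\ref{R1connection}) and the rigidity estimates underlying Theorem \ref{struct}, in the spirit of Conti--Schweizer \cite{ContiSchweizer-Linear}.
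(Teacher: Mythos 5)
Your overall architecture (compactness, liminf via the cell formula $\k(\nu)$, recovery sequence by patching optimal box profiles using strict star-shapedness) matches the paper, and your liminf argument is a legitimate alternative: the paper does not use the Fonseca--M\"uller blow-up method but instead exploits Theorem \ref{struct} directly --- the interfaces are finitely many (up to a $\theta$-fraction of $\mathcal{H}^1(J_c)$) disjoint straight segments with normal $e_x$ or $e_y$, so one covers them by disjoint rectangles and applies the Fonseca--Tartar scaling identity $\mathcal{F}_\nu(d,l)=2d\k(\nu)$ on each. Your blow-up route would also work (and generalizes to higher dimensions, as the paper remarks), provided you justify that the blow-up of $u_\epsilon$ at $\mathcal{H}^1$-a.e.\ jump point converges to $\bar u_{\nu}$ modulo skew-affine shifts.

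The genuine gap is in the upper bound, and it is precisely where the paper spends roughly half its length (Section \ref{sec:charInterface}). Two things are missing. First, the infimum defining $\k(\nu)$ in (\ref{interfaceenergyconst}) is taken over \emph{all} sequences $\epsilon_i\to 0$, whereas in the $\Gamma$-limsup the sequence $\epsilon_i$ is prescribed; ``by definition of $\k(\nu)$ there exist $\epsilon_i\to 0$ and $(u_i,c_i)$ \dots'' only produces some sequence of parameters, not the given one. The paper's Theorem \ref{boxseq} resolves this by rescaling an optimal sequence at scale $\bar\epsilon_{j(i)}$ up to scale $\epsilon_i$ (which stretches the box horizontally by $\alpha_i\to\infty$), selecting a low-energy sub-box by averaging, and then recompactifying in the vertical direction via Lemma \ref{energylines}; your ``periodic-tiling'' remark gestures at this but does not identify the issue. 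Second, and more seriously, your gluing step requires the box competitors to have exactly affine trace ($u_i=\bar u_\nu$ plus a small skew-affine shift, $c_i=\bar c_\nu$) on the faces parallel to the interface, so the transition zones genuinely cost $o(1)$. The naive cut-off $\bar u_i=\psi_i u_i+(1-\psi_i)\bar u_\nu$ fails: the elastic term produces $\epsilon_i^{-1}\|(u_i-\bar u_\nu)\otimes\nabla\psi_i\|_{L^2}^2\sim\epsilon_i^{-3}\|u_i-\bar u_\nu\|_{L^2}^2$ on an $\epsilon_i$-thick strip, and the Poincar\'e inequality with mean subtraction only gains a factor $d^2$, not $\epsilon_i^2$ (Remark \ref{rmk:poincareChallenge}). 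The paper's resolution --- an $H^{1/2}$ trace estimate $\|u_i-w_y\|^2_{H^{1/2}}\le C\eta_i\epsilon_i$ on a large set of good slices $y$, proved via the refining-grid construction, Lemma \ref{conti4.4}, and Korn's inequality, followed by a Gagliardo-type extension (Lemma \ref{traceext}) and a zero-boundary-value Poincar\'e inequality on the strip --- is the key idea your proposal lacks; invoking ``the rigidity estimates underlying Theorem \ref{struct}'' does not substitute for it, since Theorem \ref{struct} concerns the limit configuration, not quantitative control of the approximating sequence near a slice.
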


We note that in the above theorem we have restricted the functions $c$ to map into $[0,1],$ a physically meaningful constraint as $c$ is the normalized lithium-ion density. 

Furthermore, it is natural to consider specific mass constraints on the imposed on the lithium-ions. Explicitly, let $\{m_\epsilon \}_{\epsilon>0}\subset [0,1]$ be a net converging to $m_0\in [\mu_0,\mu_1]$ as $\epsilon\to 0$, and consider $\Gamma$-convergence restricting $c_\epsilon$ to satisfy $\intbar_\Omega c_\epsilon \ dx \ dy = m_\epsilon.$ We then have:

\begin{thm} \label{maintheoremconstraint}
The results of Theorem \ref{maintheorem} still hold under the restriction that $\Gamma - $convergence is performed restricting $c_\epsilon$ to satisfy $$\intbar_\Omega c_\epsilon \ dz = m_\epsilon,$$ where $m_\epsilon\in [0,1].$ 
\end{thm}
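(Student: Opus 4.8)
The plan is to deduce the constrained $\Gamma$-convergence result from the unconstrained one (Theorem \ref{maintheorem}) by the standard device of perturbing recovery sequences to satisfy the mass constraint, while noting that the liminf inequality transfers for free. I would begin by observing that the $\Gamma$-$\liminf$ inequality is immediate: any sequence $(u_\epsilon,c_\epsilon)$ with $\intbar_\Omega c_\epsilon\,dz = m_\epsilon$ is in particular an admissible sequence for the unconstrained problem converging to the same limit $(u,c)$, so $\liminf_\epsilon I_\epsilon[u_\epsilon,c_\epsilon,\Omega]\ge I_0[u,c,\Omega]$ by Theorem \ref{maintheorem}. One should also check that the limit $c$ automatically satisfies $\intbar_\Omega c\,dz = m_0$ whenever $I_0[u,c,\Omega]<\infty$: since $c_\epsilon\to c$ in $L^2(\Omega)$ and $|\Omega|<\infty$ we get $\intbar_\Omega c_\epsilon\,dz\to\intbar_\Omega c\,dz$, and since $m_\epsilon\to m_0$ this forces $\intbar_\Omega c\,dz = m_0$. (If $c$ is required to land in $\{\mu_0,\mu_1\}$ a.e., then $m_0\in[\mu_0,\mu_1]$ is exactly the compatibility range in which such $c$ exist, which is why that hypothesis is placed on $m_0$.)

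The substantive part is the $\Gamma$-$\limsup$ inequality, i.e.\ constructing a recovery sequence respecting the constraint. Fix an admissible limit $(u,c)$ with $I_0[u,c,\Omega]<\infty$, so $c\in BV(\Omega;\{\mu_0,\mu_1\})$, $e(u)=ce_0$, and $\intbar_\Omega c\,dz=m_0$. Let $(\tilde u_\epsilon,\tilde c_\epsilon)$ be the unconstrained recovery sequence from Theorem \ref{maintheorem}, so $\tilde u_\epsilon\to u$ in $H^1$, $\tilde c_\epsilon\to c$ in $L^2$, and $I_\epsilon[\tilde u_\epsilon,\tilde c_\epsilon,\Omega]\to I_0[u,c,\Omega]$. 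Set $\delta_\epsilon := m_\epsilon - \intbar_\Omega \tilde c_\epsilon\,dz$; since $\intbar_\Omega \tilde c_\epsilon\,dz\to m_0$ and $m_\epsilon\to m_0$ we have $\delta_\epsilon\to 0$. The idea is to add a small correction $\varphi_\epsilon$ with $\intbar_\Omega\varphi_\epsilon\,dz=\delta_\epsilon$, supported in a region where $\tilde c_\epsilon$ is safely in the interior of $(0,1)$ (e.g.\ a fixed ball $B\csubset\Omega$ on which $c\equiv\mu_i$ for one of the wells, which exists since $c$ is nonconstant only across an $\mathcal{H}^1$-finite jumpset — if $c$ is constant the construction is even easier), choosing $\varphi_\epsilon$ to be, say, a fixed bump function rescaled by $\delta_\epsilon$ times a normalizing constant. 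Then $c_\epsilon := \tilde c_\epsilon + \varphi_\epsilon$ satisfies the constraint, still converges to $c$ in $L^2$, and still maps into $[0,1]$ for $\epsilon$ small (since $|\delta_\epsilon|\to 0$ and on $\supp\varphi_\epsilon$ the values of $\tilde c_\epsilon$ are bounded away from $\{0,1\}$). I keep $u_\epsilon := \tilde u_\epsilon$ unchanged.

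It remains to verify $I_\epsilon[u_\epsilon,c_\epsilon,\Omega]\le I_\epsilon[\tilde u_\epsilon,\tilde c_\epsilon,\Omega] + o(1)$. The elastic term is untouched since $u_\epsilon=\tilde u_\epsilon$ and $c_\epsilon$ differs from $\tilde c_\epsilon$ only on a fixed compact set where we can control things — more precisely one estimates the change in $\frac1\epsilon\int_\Omega \mathbb{C}(e(u_\epsilon)-c_\epsilon e_0):(e(u_\epsilon)-c_\epsilon e_0)\,dz$ against the unperturbed value; here there is a genuine subtlety, because the elastic term carries a $1/\epsilon$ and a naive bound gives $O(\delta_\epsilon^2/\epsilon)$, which need not vanish. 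The fix is to choose $\delta_\epsilon$ decaying fast enough — but $\delta_\epsilon$ is dictated by the data. The cleaner route, and the one I would actually pursue: absorb the mass defect not by a bump but by slightly translating/dilating the interface layer of the unconstrained recovery sequence, or equivalently by first replacing $c$ with the exact target mass at the $I_0$ level — but $c\in\{\mu_0,\mu_1\}$ is rigid, so instead one modifies $\tilde c_\epsilon$ within its transition layer. Concretely, since $I_\epsilon[\tilde u_\epsilon,\tilde c_\epsilon]$ is bounded, the transition layer of $\tilde c_\epsilon$ has measure $O(\epsilon)$ and there $\tilde c_\epsilon$ sweeps through all intermediate values; perturbing $\tilde c_\epsilon$ there by $O(\delta_\epsilon)$ over a set of measure $O(\epsilon)$ changes the Modica–Mortola part by $o(1)$ provided $|\delta_\epsilon|$ decays, and changes the elastic part by $O(\delta_\epsilon^2)$ (not $O(\delta_\epsilon^2/\epsilon)$, since on the transition set $e(\tilde u_\epsilon)-\tilde c_\epsilon e_0$ is already $O(1)$ and we only shift $\tilde c_\epsilon$ by $O(\delta_\epsilon)$ pointwise while the set has measure $O(\epsilon)$: the cross term is $O(\delta_\epsilon)$, times $1/\epsilon$, times measure $O(\epsilon)$, hence $O(\delta_\epsilon)\to0$). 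I expect this bookkeeping — ensuring each of the three terms in $I_\epsilon$ is perturbed by $o(1)$ despite the $1/\epsilon$ weights — to be the main obstacle; everything else (the liminf inequality, the constraint on $m_0$, measurability of the modified functions) is routine. Once the perturbed sequence is shown to be a recovery sequence, both inequalities hold and the constrained $\Gamma$-limit equals $I_0$, exactly as in the unconstrained case.
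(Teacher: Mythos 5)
Your reduction of the problem to modifying a recovery sequence, and your diagnosis that a fixed bump of amplitude $\delta_\epsilon$ fails because the well and elastic terms each contribute $O(\delta_\epsilon^2/\epsilon)$, are both correct. But the fix you propose does not close the gap: a perturbation of pointwise size $O(\delta_\epsilon)$ supported on the transition layer, which has measure $O(\epsilon)$, changes the total mass only by $O(\delta_\epsilon\,\epsilon)$, whereas the defect you must cancel is $\delta_\epsilon\,\mathcal{L}^2(\Omega)$ with $\delta_\epsilon$ decaying at an unknown (possibly arbitrarily slow) rate dictated by the data $m_\epsilon$. To absorb that defect on a set of measure $O(\epsilon)$ you would need amplitude $\sim\delta_\epsilon/\epsilon$, which destroys both the $[0,1]$ constraint and your energy accounting. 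Note also that the case $c$ constant (i.e.\ $m_0=\mu_0$ or $\mu_1$) is not ``even easier'': there is no transition layer at all, so your second mechanism is unavailable and only the bump — which you correctly rejected — remains.

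The missing idea is that a mass defect of order $\delta_\epsilon$ must be absorbed by flipping the \emph{phase} (an $O(1)$ change in $c$) on a set of measure $\sim\delta_\epsilon$, and that this can be done at vanishing energetic cost because the cost of an optimal transition is proportional to the perimeter of that set. Concretely, the paper keeps $u$ fixed, observes that with $u$ frozen the functional in $c$ alone has integrand $\frac1\epsilon W(c')+\epsilon\|\nabla c'\|^2$ with the \emph{single-well} potential $W(s)=f(s)+\|(s-\mu_0)e_0\|^2$ (so the troublesome $\frac1\epsilon$-weighted elastic term is absorbed into the well), and then inserts the Modica--Mortola optimal profile $\phi_i^{-1}(d_{E_i}+s_i)$ around a ball $B(z_0,\eta_i)$ whose radius $\eta_i\to0$ is tuned to the mass defect, with the shift $s_i$ chosen by the intermediate value theorem to hit the constraint exactly; the coarea formula then bounds the added energy by $C(\eta_i+\sqrt{\epsilon_i})\int_0^1\sqrt{\epsilon_i+W}\to0$. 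This insertion is performed inside a ball where the recovery sequence is identically at a well (which exists by the structure theorem), so no interaction terms arise. Your proposal correctly identifies the obstacle but lacks this construction, so as written the $\limsup$ inequality is not established.
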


We comment that this result specifically depends on the split structure wherein $\Gamma-$convergence relies on both the convergence of $u_\epsilon$ and $c_\epsilon.$ The analogous constraint in the case of energies such as $F_\epsilon$ would be a mass constraint imposed on the gradient, but such gradient restrictions impose more difficulties in the explicit construction of low energy sequences.

In Section \ref{secmathprelim} we introduce basic definitions and present some results about the functional $I_{\epsilon}$. With these in hand, in Section \ref{seccompact} we consider the compactness of the energy functionals, i.e., if $I_{\epsilon_i}[u_i,c_i,\Omega]\leq C <\infty$ for all $i\in \N,$ for which topologies do $\{u_i\}$ and $\{c_i\}$ converge? We conclude that, up to subsequences, $\{u_i\}$ and $\{c_i\}$ strongly converge in $H^1$ and $L^2$, respectively. This naturally motivates us to consider $\Gamma-$convergence for the energy functionals with strong convergence of $(u_i,c_i)$ in $H^1(\Omega,\R^2)\times L^2(\Omega,[0,1]).$ In Section \ref{secliminf} we prove the associated limit inferior bound showing that for any sequence $\epsilon_i\to 0$, for all $(u_i,c_i)\to (u,c)$ in $H^1\times L^2,$ we have $$\liminf_{i \to \infty} I_{\epsilon_i}[u_i,c_i,\Omega]\geq I_{0}[u,c,\Omega].$$ To conclude Theorem \ref{maintheorem}, it remains to prove that there is a recovery sequence for any pair $(u,c)\in H^1(\Omega,\R^2)\times L^2(\Omega,[0,1])$ such that $I_0[u,c,\Omega]<\infty$. To do this, we will need a precise characterization of the interfacial energy in terms of sequences which are affine away from the interface. We prove this characterization in Section \ref{sec:charInterface}. In Section \ref{seclimsup} we critically utilize this characterization to prove that for any $(u,c)\in H^1(\Omega,\R^2)\times L^2(\Omega,[0,1])$ there are $(u_i,c_i)\in H^1(\Omega,\R^2)\times L^2(\Omega,[0,1])$ strongly converging to $(u,c)$ with $$\lim_{i\to \infty} I_{\epsilon_i}[u_i,c_i,\Omega] = I_{0}[u,c,\Omega].$$ Lastly, in Section \ref{secmassconstraint} we extend Theorem \ref{maintheorem} to the case of mass constraints (see Theorem \ref{maintheoremconstraint}).

The primary contribution of this paper to the existing literature on phase field models for lithium-ion batteries is the mathematical validation of the numerical solutions witnessing phase separation for small interfacial widths as seen by Bazant and Cogswell \cite{cogswell2012coherency}. The primary mathematical contribution of this paper is in connecting analysis of the functional $I_\epsilon$ to the treatment of the functional $F_{\epsilon}.$ Apriori, the latter connection is not clear as no second order terms appear in $I_\epsilon$ and $I_\epsilon[u,c,\Omega]$ possesses the integrand term $$\|e(u)-ce_0\|^2$$ which is not a well function. However this term is similar to the well function $W(\nabla u) := \min\{\|e(u)-\mu_0e_0\|^2,\|e(u)-\mu_1e_0\|^2\}$, and this similarity is exploited to crucially apply the rigidity analysis of Conti and Schweizer in \cite{ContiSchweizer-Linear}.

\section{Preliminaries}\label{secmathprelim}

We first introduce some notation that will be used throughout the paper. We write $z = (x,y)\in \R^2,$ and we denote by $e_x$ and $e_y$ the standard basis vectors in $\R^2.$ For a set $D\subset \R^2$, we define $\chi_D:\R^2\to \{0,1\}$ to be the indicator function of $D.$ We denote the convex hull of a set $D\subset \R^2$ by $\text{conv}(D).$ Given $\phi\in \R,$ we further define the skew symmetric matrix 
\begin{equation}\label{def:skewsymRot}
R_\phi : = \begin{bmatrix}
   0      & -\phi \\
    \phi      & 0 \end{bmatrix}.
    \end{equation} For $u\in H^1(\Omega,\R^2),$ we define the symmetrized gradient $e(u):=\frac{\nabla u+(\nabla u)^T}{2}.$ For a function $c\in BV(\Omega,\R),$ we let $J_c$ denote the jumpset of $c$ (see \cite{AmbrosioFuscoPallara},\cite{EvansGariepy}). 
We will occasionally drop reference to the domain or range in a function norm, e.g., $\|u\|_{H^1(\Omega,\R^2)} = \|u\|_{H^1(\Omega)} = \|u\|_{H^1}.$ If a norm is written without a function space subscript, it refers to the euclidean norm of the vector or matrix.

We note throughout the following that we will consider the class of functionals $\{I_\epsilon\}_{\epsilon>0}$ (defined by (\ref{energyFunc})) as defined on $H^1(\Omega,\R^2)\times L^2(\Omega,[0,1])\times \mathcal{A}(\R^2),$ where $\mathcal{A}(\R^2)$ is the collection of all open subsets of $\R^2.$

We will make use of the exact structure of the well function $f$ (see (\ref{chemPotFunc}) and (\ref{wellFunc})).
\begin{prop}\label{prop:ffunc}
Let $f$ be defined as in (\ref{chemPotFunc}). The following holds:
\begin{enumerate}[label=\roman*)]
\item If $\omega\leq 2KT,$ then $f$ is a single-well function.
\item If $\omega>2KT$, then $f$ is a double-well function with super-quadratic wells at $\mu_0 \in (0,1/2)$ and $\mu_1 = 1-\mu_0 \in (1/2,1).$
\end{enumerate}
\end{prop}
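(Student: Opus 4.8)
The plan is to reduce both parts to a study of the sign of $\bar f''$, exploiting the reflection symmetry $\bar f(s)=\bar f(1-s)$ about $s=\tfrac12$. On $(0,1)$ one computes
$$\bar f'(s)=\omega(1-2s)+KT\log\frac{s}{1-s},\qquad \bar f''(s)=-2\omega+\frac{KT}{s(1-s)},$$
together with the elementary facts $\bar f(0^+)=\bar f(1^-)=0$, $\bar f'(0^+)=-\infty$, $\bar f'(1^-)=+\infty$, $\bar f'(\tfrac12)=0$, and $\bar f'(1-s)=-\bar f'(s)$. Since $s(1-s)\le\tfrac14$ on $[0,1]$ with equality only at $s=\tfrac12$, we get $\bar f''(s)\ge 4KT-2\omega$, with equality exactly at $s=\tfrac12$. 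For part (i), if $\omega\le 2KT$ this forces $\bar f''\ge 0$ on $(0,1)$ with $\{\bar f''=0\}$ empty or a single point, so $\bar f'$ is strictly increasing; its unique zero is $s=\tfrac12$, which is therefore the unique critical point and the unique minimizer, and hence $f=\bar f-\min_{[0,1]}\bar f$ has a single zero and is a single-well function.

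For part (ii), assume $\omega>2KT$. Then $\bar f''(s)=0$ is equivalent to $s(1-s)=\tfrac{KT}{2\omega}<\tfrac14$, which has exactly two roots $s_-<\tfrac12<s_+$ in $(0,1)$, symmetric about $\tfrac12$; consequently $\bar f$ is strictly convex on $(0,s_-)$, strictly concave on $(s_-,s_+)$, strictly convex on $(s_+,1)$, and $\bar f''(\tfrac12)=4KT-2\omega<0$ makes the critical point $\tfrac12$ a strict local maximum. On $(s_-,s_+)$ the derivative $\bar f'$ is strictly decreasing with $\bar f'(\tfrac12)=0$, so $\bar f'(s_-)>0>\bar f'(s_+)$ and $\tfrac12$ is its only zero there. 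On $(0,s_-)$, $\bar f'$ is strictly increasing from $-\infty$ up to $\bar f'(s_-)>0$, hence has exactly one zero $\mu_0\in(0,s_-)\subset(0,\tfrac12)$; using $\bar f'(1-s)=-\bar f'(s)$, the unique zero on $(s_+,1)$ is $\mu_1:=1-\mu_0\in(\tfrac12,1)$. Thus $\bar f$ has precisely the three critical points $\mu_0<\tfrac12<\mu_1$, with $\mu_0$ and $\mu_1$ local minima and, by the reflection symmetry, $\bar f(\mu_0)=\bar f(\mu_1)=\min_{[0,1]}\bar f$ (note also $\bar f(\mu_0)<\bar f(0)=0$ since $\bar f'<0$ on $(0,\mu_0)$), so $f$ vanishes exactly at $\mu_0,\mu_1$ and is a double-well function. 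For the super-quadratic assertion: $\mu_0\in(0,s_-)$ and $\mu_1\in(s_+,1)$ lie in the region $\{\bar f''>0\}$, so $\bar f''(\mu_0),\bar f''(\mu_1)>0$, giving quadratic lower bounds $f(s)\ge c_0|s-\mu_i|^2$ on neighborhoods of the wells; combining these with the positivity of $f$ on any compact subset of $[0,1]$ bounded away from $\{\mu_0,\mu_1\}$ (including the endpoints, where $f(0)=f(1)=-\min_{[0,1]}\bar f>0$) yields a single constant $c>0$ with $f(s)\ge c\,\dist(s,\{\mu_0,\mu_1\})^2$ on all of $[0,1]$.

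Since the whole argument is one-variable calculus, there is no genuine obstacle; the only place demanding some care is the case $\omega>2KT$, where one must correctly track $\bar f'$ across the three convexity/concavity intervals — in particular invoking the blow-up $\bar f'(0^+)=-\infty$ (and its mirror at $s=1$) to force the existence of the minimizers $\mu_0,\mu_1$ strictly inside $(0,s_-)$ and $(s_+,1)$ — and then stitching the local quadratic estimates at the wells into the global minorant underlying the term ``super-quadratic.''
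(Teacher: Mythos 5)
Your proof is correct and follows essentially the same route as the paper: both arguments hinge on the sign of $\bar f''(s) = -2\omega + KT/(s(1-s))$, the bound $s(1-s)\le 1/4$, and the count of inflection points, with the symmetry about $s=1/2$ locating the wells at $\mu_0$ and $1-\mu_0$. Your explicit sign-tracking of $\bar f'$ across the three convexity intervals in the case $\omega>2KT$ is in fact cleaner than the paper's contradiction argument, and your stitching of the local quadratic bounds into a global minorant is the intended content of the paper's appeal to the fundamental theorem of calculus for super-quadraticity.
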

\begin{proof}
 By definition of absolute temperature and the Boltzmann constant, we note that it always holds that $KT\geq 0$. However, there are no restrictions on the sign of $\omega$. In the case $ \omega\leq 0$, we note that $f$ is decreasing on the interval $[0,1/2]$ and increasing on the interval $[1/2,1],$ as observed by a direct inspection of the derivative $$\frac{d}{ds}f(s) =\omega(1-2s)+KT\log\Big(\frac{s}{1-s} \Big).$$  Consequently $f$ is a single-well function.

For the case of $\omega >0$, we note that 
\begin{equation}\label{2ndderiv}
\frac{d^2}{ds^2}f(s) =-2\omega+\frac{KT}{s(1-s)},
\end{equation} which has at most $2$ zeros. Hence, $f$ necessarily has zero, one, or two inflection points.

In the case of zero inflection points, that is when $\omega<2KT$, $f$ has a single well (minimum) at $1/2$, as the derivative blows up to negative infinity at the $0$ boundary point.

In the case of one inflection point, that is when $\omega=2KT$, symmetry implies it occurs at $1/2,$ and this is the minimizer. We note the well is not super-quadratic.

In the case of two inflection points, that is when $\omega>2KT$, we must have that $f$ is a double well function with superquadratic wells. Note that the inflections must occur on the interior by equation (\ref{2ndderiv}) and there must exist $\mu_0$ such that a minimum is obtained. If this minimum is obtained at $\mu_0=1/2,$ we cannot have two inflection points. To see this, note we cannot have any local min/maxes away from the global minimizer or else we contradict the number of inflection points. Thus, at the inflection point, $\frac{d}{ds} f\leq 0$. As $\frac{d^2}{ds^2}f$ is the reciprocal of a quadratic plus a constant, it changes signs at the inflection point, and consequently $\frac{d^2}{ds^2}f<0$ after this point. But this implies $\frac{d}{ds}f(1/2)<0,$ a contradiction. Consequently, the minimum is obtained for some $\mu_0 \neq 1/2$ and $\mu_1 = 1-\mu_0.$ As there are at least two inflection points between every minimizer, these are the only minimizers (local or global). We further note that the function $f$ cannot inflect at $\mu_0,$ else the derivative is only positive between $\mu_0$ and $\mu_1.$ From this, it follows that we may write $[0,1]$ as the union of $I_1:= [0,\mu_0], \  I_2:=[\mu_0,1/2], \ I_3:=[1/2,\mu_1],$ and $I_4:=[\mu_1,1],$ where $f$ is decreasing on $I_1$ and $I_3$ and increasing on $I_2$ and $I_4.$ As in the case of zero inflection points, we have that $\frac{d^2}{ds^2}f(\mu_0)>0$, and we may apply the fundamental theorem of calculus to find a desired quadratic function to show that $f$ is super-quadratic at the wells.

\end{proof}

In the case in which $f$ is a single-well function, phase separation will not be witnessed (see \cite{Bazant-PhaseSepDyn2014}). The analysis of this case is simple as the functions for which $I_0$ is finite still belong to Sobolev spaces, and we do not focus on it. Consequently, in what follows we assume $f$ is a double well, with wells $\mu_0$ and $\mu_1$ satisfying 
\begin{equation}\label{wells}
0<\mu_0<1/2<\mu_1<1,
\end{equation}  and 
\begin{equation}\label{fcoeffCond}
\omega>2KT.
\end{equation}

Before invoking (\ref{detcond}) to simplify the functional $I_\epsilon$, we provide a justification of this assumption (see also \cite{BallJames-FinePhase1987}, \cite{Dolzmann1995-microstructure}). 
\begin{remark}\label{rmk:symAssump}
We note that by property (\ref{coercivity}), $\mathbb{C}(\R^{2\times 2}_{\rm{skew}}) = \{0\}.$ Furthermore we recall that symmetric and skew-symmetric matrices are orthogonal with respect to the Frobenius inner product. Uniquely decomposing the lattice misfit matrix as $e_0 = e^{\rm{sym}}_0 +e^{\rm{skew}}_0,$ with $e^{\rm{sym}}_0 \in \R^{2\times 2}_{\rm{sym}}$ and $e^{\rm{skew}}_0\in \R^{2\times 2}_{\rm{skew}}$, it follows 
$$\mathbb{C}(e(u)-ce_0):(e(u)-ce_0) = \mathbb{C}(e(u)-ce_0^{\rm{sym}}):(e(u)-ce_0^{\rm{sym}}). $$ Consequently, the assumption $e_0\in \R^{2\times 2}_{\rm{sym}}$ in (\ref{detcond}) occurs without loss of generality.
\end{remark}
\begin{prop} \label{prop:detcond} Suppose there is non-affine $u\in C(\Omega,\R^2)$ which is piecewise $C^1$ with the jumpset of $\nabla u$ given by a disjoint union of $C^1$ manifolds, and $e(u)\in \{\mu_0,\mu_1\}e_0$ where $\mu_0,\mu_1$ satisfy (\ref{wells}) and $e_0\in \R^{2\times 2}_{\rm{sym}}$. Then (\ref{detcond}) holds.
\end{prop}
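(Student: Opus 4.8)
The plan is to derive the constraint $\det(e_0) \le 0$ from the assumed existence of a non-affine displacement $u$ whose symmetrized gradient takes the two distinct values $\mu_0 e_0$ and $\mu_1 e_0$ across some interface. First I would locate an interface: since $u$ is non-affine, piecewise $C^1$, and $e(u) \in \{\mu_0, \mu_1\}e_0$ with the two values distinct (because $\mu_0 \ne \mu_1$ and, in the relevant case $e_0 \ne 0$), the two ``phases'' $\{e(u) = \mu_0 e_0\}$ and $\{e(u) = \mu_1 e_0\}$ must both be nonempty, so there is a point $p$ on the jumpset of $\nabla u$ lying on one of the $C^1$ manifolds, with a well-defined normal $\nu \in S^1$ and tangent $t \in S^1$. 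Near $p$ the function $u$ is continuous and $C^1$ up to the interface from each side, so the tangential derivative $\nabla u\, t$ agrees from both sides at $p$ (this is exactly the Hadamard jump condition for $C^0$ functions). Writing $\nabla u = e(u) + W$ with $W$ the skew part, on one side $\nabla u = \mu_0 e_0 + W_0$ and on the other $\nabla u = \mu_1 e_0 + W_1$ for skew matrices $W_0, W_1$; equality of the tangential parts gives $(\mu_1 - \mu_0) e_0\, t + (W_1 - W_0) t = 0$, i.e. $\big((\mu_1-\mu_0)e_0 + S\big)t = 0$ where $S := W_1 - W_0$ is skew. Equivalently, $(\mu_1 - \mu_0) e_0 + S = a \otimes \nu$ for some $a \in \R^2$, so this matrix has rank at most one, hence zero determinant.

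Next I would compute that determinant explicitly. Writing $S = R_s$ for some $s \in \R$ in the notation of \eqref{def:skewsymRot}, and using that $e_0$ is symmetric, I expand $\det\big((\mu_1-\mu_0)e_0 + R_s\big)$. Using bilinearity of the $2\times 2$ determinant in rows (or the identity $\det(M+N) = \det M + \det N + \big(\tr M \,\tr N - \tr(MN)\big)$ valid in dimension two), and the facts that $\det(R_s) = s^2$ and that the cross term between a symmetric matrix $B = (\mu_1-\mu_0)e_0$ and $R_s$ vanishes (a short computation: $\tr B \cdot \tr R_s - \tr(B R_s) = 0 - 0$ since $\tr R_s = 0$ and $\tr(B R_s) = 0$ for $B$ symmetric, $R_s$ skew), I obtain
\begin{equation*}
0 = \det\big((\mu_1-\mu_0)e_0 + R_s\big) = (\mu_1-\mu_0)^2 \det(e_0) + s^2.
\end{equation*}
Since $s^2 \ge 0$ and $(\mu_1 - \mu_0)^2 > 0$ by \eqref{wells}, this forces $\det(e_0) \le 0$, which is \eqref{detcond}.

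One technical point I would want to address carefully is the degenerate possibility $e_0 = 0$: then $e(u) \equiv 0$, and rigidity for linear elasticity (Korn's inequality / the kernel of $e(\cdot)$) forces $u$ to be an infinitesimal rigid motion, hence affine, contradicting the hypothesis — so this case simply does not occur, and when $e_0 \ne 0$ the two well values $\mu_0 e_0 \ne \mu_1 e_0$ really are distinct as claimed. I would also note that the existence of a genuine interface needs the non-affineness together with the piecewise structure: if $e(u)$ were constant equal to $\mu_i e_0$ everywhere then again $u$ would be affine. The main obstacle, such as it is, is not any single hard estimate but rather making the interface argument rigorous at the level of regularity assumed (piecewise $C^1$, $C^1$ jump manifolds): one must justify that at a jump point of $\nabla u$ the one-sided limits of $\nabla u$ exist and that continuity of $u$ pins down the tangential component, i.e. a clean statement of the Hadamard compatibility condition in this setting. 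Everything after that is the elementary determinant computation above.
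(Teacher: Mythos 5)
Your proposal is correct and follows essentially the same route as the paper: pick a point on an interface between the two phases, apply the Hadamard jump condition to obtain the rank-one connection $(\mu_1-\mu_0)e_0+S=a\otimes\nu$ with $S$ skew, and take determinants to get $(\mu_1-\mu_0)^2\det(e_0)+s^2=0$, forcing $\det(e_0)\le 0$. The extra care you take with the degenerate case $e_0=0$ and with justifying the existence of the interface is a sound addition but does not change the argument.
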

\begin{proof}
We may consider the tangent derivative of $u$ at a point $z_0$ on interface separating regions where $e(u) = \mu_0e_0$ and $e(u) =  \mu_1 e_0$. Computing the tangent derivative in the direction $t\in \R^2$ from both sides of the interface, we find $$ (\mu_0 e_0 +S)t = \nabla u(z_0) t = (\mu_1e_0 + S')t$$ for some skew-symmetric matrices $S$ and $S'$. 
Rearranging, we have $$((\mu_1-\mu_0)e_0 +S_\nu)t = 0 $$ with $S_\nu = \begin{bmatrix}
   0      & s \\
    -s      & 0 \end{bmatrix} :=  S'-S.$ 
It follows that \begin{equation} \label{R1connection2}
(\mu_1-\mu_0)e_0 +S_\nu = a \otimes \nu
\end{equation} for some vector $a\in \R^2$ and $\nu\in S^1$ normal to the interface (i.e., normal to $t$). As $e_0$ is symmetric, taking the determinant of the previous equation implies \begin{equation} \label{detquad} (\mu_1-\mu_0)^2\text{det}(e_0) +s^2 = 0.   
\end{equation} In order for equation (\ref{detquad}) to have solutions in the variable $s$, we must have
\begin{equation} \text{det}(e_0)\leq 0. \nonumber
\end{equation} 
\end{proof}
\begin{remark} For functions $u$ and $c$ such that the $\Gamma-$limit of $I_\epsilon$ (assuming it exists) is finite, we would expect $e(u)\in \{\mu_0,\mu_1\}e_0$. A lenient approximation of this relation is given by the hypothesis of the above proposition. A more rigorous qualification of the assumption (\ref{detcond})--in the spirit of Ball and James \cite{BallJames-FinePhase1987} or Dolzmann and M\"uller \cite{Dolzmann1995-microstructure}--is beyond our scope of interest.
\end{remark}

For a $2 \times 2$ matrix, having rank-one is equivalent to having zero determinant, and thus for symmetric $e_0$, $\text{det}(e_0)\leq 0$ holds if and only if the rank-one decompositon (\ref{R1connection2}) holds for some $\nu$. Equation (\ref{detquad}) clearly implies there are at most two possible choices of $s$, and up to sign, two choices of $\nu.$ In the following, we assume that 
\begin{equation} \label{detcond2}
\det(e_0)<0, \quad \quad  e_0\in \R^{2\times 2}_{\rm{sym}}
\end{equation} with the simpler case being that $\text{det}(e_0) = 0$ for which there is a single interface normal (see (\ref{R1connection2}) and (\ref{detquad})).

\begin{remark}
We \textit{claim} that under a change of variables, we may consider the case in which \begin{equation} \nonumber
 e_0 = e_x\otimes e_y +e_y \otimes e_x = \begin{bmatrix}
   0      & 1 \\
    1      & 0 \end{bmatrix},
 \end{equation} where we recall that $e_x$ and $e_y$ are the standard basis vectors. Note as $e_x\otimes e_y -e_y \otimes e_x $ is skew-symmetric, in this case, the normal $\nu$ in (\ref{R1connection2}) can be $\pm e_x$ or $\pm e_y$. We justify the claim: As $e_0\in \R^{2\times 2}_{\rm{sym}}$ and $\rm{det}(e_0)< 0,$ up to scaling by a diagonal matrix, there is an orthogonal matrix $\bar R$ such that  
 \begin{equation} \label{eqn:matCoV1}
 \bar R^T e_0\bar R =   \begin{bmatrix}
   -1      & 0 \\
    0      & 1 \end{bmatrix}.
 \end{equation}
In turn, direct computation shows that there is an orthogonal matrix $\bar Q$ such that 
\begin{equation}\label{eqn:matCoV2}
 \bar Q^T \begin{bmatrix}
   -1      & 0 \\
    0      & 1 \end{bmatrix} \bar Q =   \begin{bmatrix}
   0      & 1 \\
    1      & 0 \end{bmatrix} =: \tilde e_0.
    \end{equation}
 We detail how to change the energy functional $I_\epsilon$ (see (\ref{energyFunc})) assuming $e_0$ is given by the right hand side of (\ref{eqn:matCoV1}) to the form (\ref{eqn:matCoV2}); the other case, changing $e_0$ from the original matrix to the right-hand side of (\ref{eqn:matCoV1}), is similar. Define the symmetric, positive definite, fourth order tensor $\tilde{\mathbb{C}}$ by
 $$\tilde{\mathbb{C}}(v):w = \mathbb{C}(\bar Q v \bar Q^T):  (\bar Q w\bar Q^T), \quad v,w\in \R^{2\times 2}_{\rm{sym}}. $$
 For an admissible pair $(u,c)\in H^1(\Omega)\times L^2(\Omega)$ for the functional $I_\epsilon,$ we consider the transform $u \mapsto \tilde u : = \bar Q^T u(\bar Q \cdot)$ and $c \mapsto \tilde c : = c(\bar Q \cdot).$ We then define $\tilde{I}_{\epsilon}$ by (\ref{energyFunc}) with $\mathbb{C}$ and $e_0$ replaced by $\tilde{\mathbb{C}}$ and $\tilde e_0$, respectively. It follows by a change of variables that 
 $$\det(Q^T)I_\epsilon[u,c,\Omega] = \tilde I_\epsilon[\tilde u,\tilde c, \bar Q^T\Omega], $$ which justifies the claim.
  \end{remark}

\section{Compactness}\label{seccompact}

To motivate the topological convergence that we will consider for $\Gamma - $convergence, we look for appropriate function spaces where compactness holds for sequences of bounded energy.

\begin{thm} \label{thm:compactness}
Let $\Omega\subset \R^2$ be an open, bounded set with Lipschitz continuous boundary. Assume that (\ref{coercivity}) and (\ref{fcoeffCond}) hold. Let $\epsilon_i\to 0,$ $\{u_i\}_i\subset H^1(\Omega,\R^2),$ and $\{c_i\}_i \subset H^1(\Omega,[0,1])$ be such that $\sup_i I_{\epsilon_i}[u_i,c_i,\Omega]<\infty,$ where $I_\epsilon$ is the functional defined in (\ref{energyFunc}). Then up to skew-affine shifts of the functions $u_i$, we may find subsequences $\{u_{i_k}\}_k$ and $\{c_{i_k}\}_k$ with $u_{i_k}\to u$ in $H^1(\Omega,\R^2)$ and $c_{i_k} \to c$ in $L^2(\Omega)$ for some $u\in H^1(\Omega,\R^2)$ and $c\in  BV(\Omega,\{\mu_0,\mu_1\})$, such that $e(u) = ce_0.$
\end{thm}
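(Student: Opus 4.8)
The plan is to read off three consequences of the uniform bound $\sup_i I_{\epsilon_i}[u_i,c_i,\Omega]\le C$. Since every term of $I_\epsilon$ is nonnegative, $\int_\Omega f(c_i)\,dz\le C\epsilon_i\to 0$ and, by the coercivity (\ref{coercivity}) of $\mathbb{C}$, $\|e(u_i)-c_ie_0\|_{L^2(\Omega)}^2\le C'\epsilon_i\to 0$; the bound $\int_\Omega\|\nabla c_i\|^2\,dz\le C/\epsilon_i$ degenerates and will only be used in combination with the first estimate. Accordingly, the convergence of the $c_i$ is obtained by a Modica--Mortola substitution, and the convergence of the $u_i$ by Korn's inequality.

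\emph{The phase field.} Define $\phi(t):=\int_0^t\sqrt{f(s)}\,ds$. By Proposition \ref{prop:ffunc} the function $f$ is continuous on $[0,1]$ and vanishes only at $\mu_0,\mu_1$, so $\phi$ is Lipschitz and strictly increasing, hence a homeomorphism of $[0,1]$ onto $[0,\phi(1)]$ with continuous inverse. The chain rule and Young's inequality give
$$\int_\Omega\|\nabla(\phi\circ c_i)\|\,dz=\int_\Omega\sqrt{f(c_i)}\,\|\nabla c_i\|\,dz\le \tfrac12 I_{\epsilon_i}[u_i,c_i,\Omega]\le\tfrac{C}{2},$$
and since $0\le\phi\circ c_i\le\phi(1)$ the sequence $\{\phi\circ c_i\}$ is bounded in $BV(\Omega)$. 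Compactness of the embedding $BV(\Omega)\hookrightarrow L^1(\Omega)$ yields a subsequence with $\phi\circ c_{i_k}\to v$ in $L^1(\Omega)$ and a.e.; applying $\phi^{-1}$ gives $c_{i_k}\to c:=\phi^{-1}(v)$ a.e., and then in $L^2(\Omega)$ by dominated convergence because $0\le c_i\le 1$. Fatou's lemma gives $\int_\Omega f(c)\,dz\le\liminf_k\int_\Omega f(c_{i_k})\,dz=0$, so $c\in\{\mu_0,\mu_1\}$ a.e. Finally $v=\phi\circ c$ takes only the two values $\phi(\mu_0),\phi(\mu_1)$ a.e.; composing with the affine map sending $\phi(\mu_j)$ to $\mu_j$ exhibits $c$ as a Lipschitz function of $v\in BV(\Omega)$, hence $c\in BV(\Omega,\{\mu_0,\mu_1\})$.

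\emph{The displacement.} Since $0\le c_i\le 1$, the bound $\|e(u_i)-c_ie_0\|_{L^2}\to 0$ forces $\|e(u_i)\|_{L^2(\Omega)}\le C$. Subtracting from each $u_i$ a skew-affine map $z\mapsto W_iz+a_i$ (with $W_i$ skew-symmetric), chosen so that $\int_\Omega u_i\,dz=0$ and $\int_\Omega(\nabla u_i-(\nabla u_i)^T)\,dz=0$, leaves $e(u_i)$ unchanged and places the normalized $u_i$ in a fixed complement of the infinitesimal rigid motions. On this subspace Korn's inequality reads $\|w\|_{H^1(\Omega)}\le C_\Omega\|e(w)\|_{L^2(\Omega)}$ with $C_\Omega$ depending only on the bounded Lipschitz domain $\Omega$. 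Moreover $e(u_{i_k})\to ce_0$ strongly in $L^2(\Omega)$, since $\|e(u_{i_k})-c_{i_k}e_0\|_{L^2}\to 0$ and $c_{i_k}\to c$ in $L^2(\Omega)$. Applying Korn's inequality to the differences $u_{i_k}-u_{i_l}$, which again lie in the normalized subspace, gives $\|u_{i_k}-u_{i_l}\|_{H^1(\Omega)}\le C_\Omega\|e(u_{i_k})-e(u_{i_l})\|_{L^2(\Omega)}\to 0$, so $\{u_{i_k}\}$ is Cauchy and hence converges strongly in $H^1(\Omega,\R^2)$ to some $u$; passing to the limit in $e(u_{i_k})\to ce_0$ gives $e(u)=ce_0$, which completes the proof.

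I expect the displacement step to be the main obstacle: the energy controls only $e(u_i)-c_ie_0$ in $L^2$, so strong $H^1$ compactness cannot be obtained without Korn's inequality together with the correct skew-affine normalization killing the kernel of $u\mapsto e(u)$, and it is precisely this step that dictates the topology asserted in the theorem. The phase-field step, though requiring the substitution $\phi$ and the verification that the limit lies in $BV(\Omega,\{\mu_0,\mu_1\})$, is otherwise a routine Modica--Mortola compactness argument.
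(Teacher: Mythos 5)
Your proof is correct and follows essentially the same route as the paper: Modica--Mortola compactness (which the paper simply cites and you carry out via the substitution $\phi(t)=\int_0^t\sqrt{f}\,ds$) for the $c_i$, and Korn's inequality after a skew-affine normalization for the $u_i$. Your Cauchy-sequence formulation of the Korn step is a minor cosmetic variant of the paper's two applications of Korn (boundedness/weak limit, then convergence of $v_i-u$), and if anything your normalization by the skew part of the average gradient is the cleaner statement of what the paper intends.
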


\begin{proof}
By standard results on the Modica-Mortola (Cahn-Hilliard) functional \cite{ModicaMortola}, up to a subsequence (not relabeled), we may assume that $c_{i} \to c$ in $L^2(\Omega)$ for some $c \in BV(\Omega,\{\mu_0,\mu_1\})$. By the coercivity of the bilinear form $\mathbb{C}$ (\ref{coercivity}), we have 
$$ \int_\Omega \|e(u_{i}) - c_ie_0\|^2 \ dz \leq C\epsilon_i.$$
By the triangle inequality, $$\|e(u_i)-ce_0\|_{L^2}\leq \|e(u_i)-c_ie_0\|_{L^2}+\|c_ie_0-ce_0\|_{L^2}\to 0. $$
Define $$v_i(x,y) := u_i(x,y) -\left(\intbar_{\Omega} e(u_i(z))\ dz\right)(x,y)^T +\alpha_i,$$ where $\alpha_i$ ensures $\int_\Omega v_i \ dz = 0.$
By Korn's Inequality (see \cite{Nitsche1981-Korn}), we have $$ \|v_i\|_{H^1}\leq C\|e(v_i)\|_{L^2} = C\|e(u_i)\|_{L^2} \leq C .$$ It follows that, up to a subsequence (not relabeled), $v_i \wkto u$ in $H^1(\Omega,\R^2)$ for some $u \in H^1(\Omega,\R^2).$ By necessity, $e(u) = ce_0.$ Thus we apply Korn's inequality a second time to find $$\|v_i - u\|_{H^1}\leq C\|e(v_i - u)\|_{L^2}  = C\|e(u_i)-ce_0\|_{L^2}\to 0,$$ which proves the theorem.

\end{proof}

The above result is analogous to Theorem 2.1 in \cite{ContiSchweizer-Linear}. We note the above method of proof may be adapted to obtain the aforementioned theorem of Conti and Schweizer without the use of Young measures. The relation derived in the above compactness result, $e(u) = ce_0,$ is further characterized by the following result due to Conti and Schweizer (Proposition 2.2 in \cite{ContiSchweizer-Linear}).

\begin{thm} \label{struct} Let $\Omega\subset \R^2$ be an open, bounded set with Lipschitz continuous boundary.
Let $u\in H^1(\Omega,\R^2)$ be such that $e(u)\in BV(\Omega,\{\mu_0e_0,\mu_1e_0\}),$ where $e_0\in \R^{2\times 2}_{\rm{sym}}$ satisfies (\ref{detcond2}). Then the jumpset of $e(u)$, $J_{e(u)}$, is the union of countably many disjoint segments with constant normal and endpoints in $\partial \Omega$. Furthermore, the normal of $J_{e(u)}$ must be $\nu$ for some $\nu$ satisfying the skew symmetric rank one connection (\ref{R1connection}). Lastly, $\nabla u$ is constant in each connected component of $\Omega\setminus J_{e(u)}$. 
\end{thm}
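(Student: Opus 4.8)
The plan is to deduce everything from a two‑dimensional kinematic compatibility identity together with the structure theory of planar sets of finite perimeter, so that Theorem~\ref{struct} becomes a statement about the finite‑perimeter set $E:=\{c=\mu_1\}$, where $c\in BV(\Omega,\{\mu_0,\mu_1\})$ is the unique scalar field with $e(u)=ce_0$ (it exists since $e_0\neq0$, $e(u)\in BV$, and $J_c=J_{e(u)}$). Write $\nabla u=e(u)+W$ with $W=\begin{pmatrix}0&w\\-w&0\end{pmatrix}$ and $w:=\tfrac12(\partial_xu_2-\partial_yu_1)\in L^2(\Omega)$. The algebraic identities among the second distributional derivatives of $u$, $\partial_xw=\partial_x(e(u))_{12}-\partial_y(e(u))_{11}$ and $\partial_yw=\partial_x(e(u))_{22}-\partial_y(e(u))_{12}$, have right‑hand sides that are finite Radon measures carried by $J_c$ (since $Dc$ is), so $w\in BV(\Omega)$ and hence $\nabla u\in BV(\Omega,\R^{2\times2})$ with $D(\nabla u)$ carried by $J_c$. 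In particular $J_{\nabla u}\subseteq J_c$ up to an $\mathcal H^1$‑null set, and since $[e(u)]=\pm(\mu_1-\mu_0)e_0\neq0$ on $J_c$ one in fact has $J_{\nabla u}=J_c$ $\mathcal H^1$‑a.e.

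Next, because $\nabla u$ is a gradient, the standard rank‑one compatibility of jumps of gradient fields applies (the tangential derivative of $u$ being two‑sided continuous, as $u\in H^1$ has no jump): at $\mathcal H^1$‑a.e.\ point of $J_{\nabla u}$ one has $[\nabla u]=a\otimes\nu$ for some $a\in\R^2$, with $\nu$ the normal to $J_c$. Taking symmetric parts, $a\otimes\nu\mp(\mu_1-\mu_0)e_0$ is skew, which is precisely relation (\ref{R1connection2}); by the determinant computation (\ref{detquad}), $\nu$ must then be one of the (at most two, up to sign) directions admitted by (\ref{R1connection}). Hence the measure‑theoretic normal of $E$ lies $\mathcal H^1$‑a.e.\ in a fixed set of at most two directions.

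It remains to turn this into the claimed global picture, and this is the main obstacle — it is essentially Proposition~2.2 of \cite{ContiSchweizer-Linear}, and requires (i) the rigidity fact that a planar finite‑perimeter set whose reduced‑boundary normal takes finitely many values has $\partial^*E$ equal, up to $\mathcal H^1$‑null sets, to a countable union of segments, and (ii) the exclusion of interior junctions. For (ii) I would argue by contradiction: near an interior junction $z_0\in\Omega$ choose $B(z_0,r)\csubset\Omega$ so small that $J_c$ meets it only in finitely many segments through $z_0$, which cut $B(z_0,r)$ into open sectors; on each sector $D(\nabla u)=0$, so $\nabla u$ and $c$ are constant. Since $c$ jumps across every such segment while taking only two values, a single sector (a loose end) is impossible, and for $k\geq2$ sectors the cyclic arrangement is $2$‑coloured by the value of $c$, forcing $k$ even — this kills $T$‑junctions. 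For an even junction, crossing a segment of normal $\nu$ changes $w$ by $\lambda(\nu)$ times the jump of $c$, where $\lambda$ has opposite sign on the two admissible normals; since $w$ is single‑valued the circulation of $Dw$ around $z_0$ vanishes, yet a short computation shows it equals a nonzero multiple of $\mu_1-\mu_0$, a contradiction. Consequently every maximal segment of $J_{e(u)}$ has both endpoints on $\partial\Omega$, distinct segments are disjoint, and (uncountably many disjoint segments being incompatible with finite perimeter) there are countably many; then $\Omega\setminus J_{e(u)}$ is open and, $D(\nabla u)$ being carried by $J_{e(u)}$, $\nabla u$ is constant on each connected component. The point I would scrutinize is whether the circulation argument genuinely disposes of every configuration permitted by two competing normals when $\det(e_0)<0$; the degenerate case $\det(e_0)=0$ is immediate, all interfaces then being parallel.
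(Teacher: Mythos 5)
The paper does not prove Theorem \ref{struct} at all: it is quoted verbatim as Proposition 2.2 of \cite{ContiSchweizer-Linear}, so your proposal has to be judged against that source rather than against anything in this paper. Your first two paragraphs are correct and are indeed how that proof begins: the Saint--Venant identities $\partial_x w=\partial_x(e(u))_{12}-\partial_y(e(u))_{11}$, $\partial_y w=\partial_x(e(u))_{22}-\partial_y(e(u))_{12}$ give $w\in BV$ and hence $\nabla u\in BV$ with $D(\nabla u)$ carried by $J_c$; the symmetry of $D^2u$ gives $[\nabla u]=a\otimes\nu$ at $\mathcal H^1$-a.e.\ jump point; and taking symmetric parts recovers (\ref{R1connection2}) and hence the restriction of $\nu$ to the two admissible directions. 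Your closing worry about the circulation argument can be put to rest: each admissible normal direction contributes at most two (opposite) rays at a junction, so together with the two-colouring only the $2$-fold and $4$-fold configurations survive; for two segments, vanishing circulation of $w$ forces equal normals (a straight interface), and for four the normals must interleave, giving circulation $4(\mu_1-\mu_0)\sqrt{-\det(e_0)}\neq 0$ when (\ref{detcond2}) holds.

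The genuine gap is the step you label (i) and then quietly rely on in (ii): you assume that around any interior point one can choose $B(z_0,r)\csubset\Omega$ meeting $J_c$ in \emph{finitely many} segments through $z_0$, which cut the ball into sectors on which $\nabla u$ is constant. Neither the finite-perimeter structure of $E$ nor the constraint $\nu\in\{\pm\nu_1,\pm\nu_2\}$ gives this for free: a priori $J_c$ near $z_0$ could be an accumulating family of short segments none of which passes through $z_0$, and then there is no sector decomposition and no circulation to compute. The exclusion of such accumulation (equivalently, the passage from ``$\partial^*E$ has normals in a two-element set $\mathcal H^1$-a.e.'' to ``$J_c$ is a locally finite union of full chords of $\Omega$'') is precisely the substantive content of Proposition 2.2 in \cite{ContiSchweizer-Linear}; it is obtained there by slicing $c$ and $w$ along the two tangent directions and using one-dimensional $BV$ structure, not by the local junction analysis alone. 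There is also a circularity risk in your fallback (``accumulating segments have endpoints converging to $z_0$, and endpoints are junctions or loose ends''), since your junction exclusion presupposes the local finiteness you are trying to establish. So the architecture is right and matches the cited proof, but the hardest step is asserted rather than proved.
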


\section{Liminf bound} \label{secliminf}
This argument is a slight variant of the one in Section 3 of \cite{ContiSchweizer-Linear}. We define the functional 
\begin{equation} \nonumber
\begin{aligned}
\mathcal{F}_{e_y}(d,l):= \inf\{\liminf\limits_{i\to \infty} & I_{\epsilon_i}[u_i,c_i,(-d,d)\times (-l,l)]:\epsilon_i\to 0,  \\
u_i\to & \bar u_{e_y} \text{ in } H^1((-d,d)\times (-l,l),\R^2),  c_i\to \bar c_{e_y} \text{ in } L^2((-d,d)\times (-l,l))\}
\end{aligned}
\end{equation} which captures the energy for a single interface in a box. Here $\bar{u}_{e_y}$ and $\bar c_{e_y}$ are defined as in (\ref{unucnu}). The proof of the following proposition is due to Fonseca and Tartar (see \cite{FonsecaTartar1989}, see also \cite{ContiFonsecaLeoni-gammConv2grad}, \cite{ContiSchweizer-Linear}).

\begin{prop} \label{FonsecaTartarProp}
Assume (\ref{coercivity}), (\ref{detcond2}), and (\ref{fcoeffCond}). Then for $d,l>0,$
\begin{equation}\label{rel1}
\mathcal{F}_{e_y}(d,l) = 2d\k(e_y), 
\end{equation} 
where $\k$ is the interfacial energy defined in (\ref{interfaceenergyconst}).
\end{prop}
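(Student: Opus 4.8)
The plan is to establish the two inequalities $\mathcal{F}_{e_y}(d,l) \le 2d\,\k(e_y)$ and $\mathcal{F}_{e_y}(d,l) \ge 2d\,\k(e_y)$ separately, using a rescaling argument to reduce the rectangle $(-d,d)\times(-l,l)$ to the unit square $\Q_{e_y}$, together with the fact that the energy density only depends on a single flat interface lying along $\{y=0\}$ with normal $e_y$. The essential structural observation is that the target functions $\bar u_{e_y}$ and $\bar c_{e_y}$ depend only on the sign of $y$, so both the interface and the admissible limiting data are invariant under translation in $x$; this is what makes the energy scale linearly in the width $2d$.

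First I would prove $\mathcal{F}_{e_y}(d,l) \le 2d\,\k(e_y)$. Take a near-optimal sequence $(u_i,c_i)$ on $\Q_{e_y}$ realizing $\k(e_y)$ (i.e. with $\liminf_i I_{\epsilon_i}[u_i,c_i,\Q_{e_y}]$ close to $\k(e_y)$, $u_i \to \bar u_{e_y}$ in $H^1$, $c_i \to \bar c_{e_y}$ in $L^2$). One then tiles the rectangle $(-d,d)\times(-l,l)$ by roughly $N \approx 2d/2l$ horizontal translates of a $2l\times 2l$ square, on each of which a rescaled copy of $(u_i,c_i)$ is placed. The key point is that because $\bar u_{e_y}$ and $\bar c_{e_y}$ are translation invariant in $x$, the copies glue together without creating jumps on the vertical interfaces between tiles (after adjusting $u_i$ by the appropriate affine shift to match boundary traces — this is where one uses that $\bar u_{e_y}$ restricted to either half-square is affine). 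Diagonalizing in $i$ and $N$, and noting the energy of each tile scales like $(2l)\cdot\k(e_y)$ while there are $\approx d/l$ of them, gives a competitor with energy $\to 2d\,\k(e_y)$. A slicing/covering argument handles the fact that $2d$ need not be an integer multiple of $2l$.

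Second, for the lower bound $\mathcal{F}_{e_y}(d,l) \ge 2d\,\k(e_y)$, I would take any admissible sequence $(u_i,c_i)$ on $(-d,d)\times(-l,l)$ with $u_i \to \bar u_{e_y}$, $c_i\to\bar c_{e_y}$, and slice the rectangle into vertical strips $(a,a+s)\times(-l,l)$ of small width $s$. On each strip, after rescaling to a unit square and restricting (a subsequence of) the sequence, one gets an admissible competitor for $\k(e_y)$ — here one must be careful that the traces on the new artificial vertical boundaries are asymptotically correct, which follows because $u_i\to\bar u_{e_y}$ in $H^1$ and the target is $x$-independent, so a Fubini/mean-value argument lets one choose good slicing planes. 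Summing the lower bounds $\k(e_y)\cdot s$ over the strips and letting $s\to 0$ (so the sum of strip widths recovers $2d$), together with superadditivity of $\liminf$ over disjoint subdomains, yields $\liminf_i I_{\epsilon_i}[u_i,c_i,(-d,d)\times(-l,l)] \ge 2d\,\k(e_y)$, hence the bound on the infimum.

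The main obstacle is the bookkeeping in both directions around the \emph{boundary data matching}: in the upper bound, ensuring the translated/rescaled copies of the near-optimal $\Q_{e_y}$-sequence can be patched together (and patched into the fixed affine data $\bar u_{e_y}$ near $\partial\big((-d,d)\times(-l,l)\big)$) with negligible additional energy, which requires cutoff-function interpolation between $u_i$ and a nearby affine map and an estimate showing the interpolation error vanishes; and in the lower bound, the analogous issue of extracting from a restricted slice a genuinely admissible sequence for the definition \eqref{interfaceenergyconst} without the artificial lateral boundaries contributing spurious energy. Since $u_i$ converges \emph{strongly} in $H^1$ (not merely weakly), these matching errors are controllable, but verifying this carefully — essentially reproducing the Fonseca--Tartar argument in the present setting with the extra elastic term $\mathbb{C}(e(u)-ce_0):(e(u)-ce_0)$ — is the technical heart of the proof. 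I expect that, as the paper indicates, this is handled by directly invoking or adapting the argument of Fonseca and Tartar in \cite{FonsecaTartar1989}, with the elastic term posing no new difficulty since it is continuous with respect to strong $H^1\times L^2$ convergence of $(u,c)$.
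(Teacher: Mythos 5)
Your lower-bound half is essentially sound and is close in spirit to what the paper does: restricting to thin vertical strips, rescaling, and using superadditivity of the $\liminf$ works, and in fact the trace issue you worry about there is a non-issue because the definition (\ref{interfaceenergyconst}) of $\k(\nu)$ imposes no boundary conditions, only interior $H^1\times L^2$ convergence, which survives restriction to a subdomain.

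The upper-bound half, however, has a genuine gap. You propose to tile $(-d,d)\times(-l,l)$ by horizontal translates of a near-optimal competitor on $\Q_{e_y}$ and to glue them with cutoff interpolation, asserting that the matching errors are controllable because $u_i\to\bar u_{e_y}$ strongly in $H^1$. They are not: interpolating between two copies of $u_i$ (or between $u_i$ and an affine map) across a layer of width $\epsilon_i$ produces an elastic term of order $\epsilon_i^{-3}\int_{\text{layer}}\|u_i-u_i'\|^2\,dz$, and strong $H^1$ convergence gives no control of this quantity at the scale $o(\epsilon_i^3)$ that would be required. This is precisely the obstruction spelled out in Remark \ref{rmk:poincareChallenge}; overcoming it for a \emph{single} horizontal matching line is what forces the paper to develop the entire $H^{1/2}$ trace machinery of Theorems \ref{H1/2bound} and \ref{boxenergy} (grid constructions, Korn, Gagliardo extension). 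Your vertical tile boundaries would need an analogous apparatus, so "reproducing the Fonseca--Tartar argument" does not reduce to a routine verification here.

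The deeper point you are missing is that no gluing construction is needed for this proposition. The paper's proof is a pure squeeze: (i) the rescaling $u\mapsto\alpha u(\cdot/\alpha)$, $c\mapsto c(\cdot/\alpha)$, $\epsilon\mapsto\alpha\epsilon$ gives the exact identity $\mathcal{F}(\alpha d,\alpha l)=\alpha\mathcal{F}(d,l)$; (ii) $\mathcal{F}(d,\cdot)$ is nondecreasing in $l$; (iii) subdividing $(-d,d)$ into $n$ equal intervals and picking the one of below-average energy gives $\mathcal{F}(d/n,l)\le\tfrac1n\mathcal{F}(d,l)$. Chaining $\tfrac1n\mathcal{F}(d,l)=\mathcal{F}(d/n,l/n)\le\mathcal{F}(d/n,l)\le\tfrac1n\mathcal{F}(d,l)$ forces equality everywhere, yields independence of $l$, and then $\mathcal{F}(d,l)=2d\,\mathcal{F}(1/2,1/2)=2d\,\k(e_y)$. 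Only the "averaging" direction of the strip argument is ever used; the hard construction you attempt in your upper bound is deferred to Sections 5 and 6 of the paper, where it genuinely belongs.
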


\begin{proof}

For simplicity, we drop the subscript $e_y.$ To see that (\ref{rel1}) holds, we note that $\mathcal{F}(d,l)$ is a nondecreasing function of $l$. Considering sequences $\bar u_i(x) = \alpha u_i(x/\alpha)$, $\bar c_i(x) = c_i(x/\alpha)$, and $\bar \epsilon_i = \alpha\epsilon_i$, we see that 
\begin{equation}\label{scalingRelation}
\mathcal{F}(\alpha d, \alpha l) = \alpha \mathcal{F}(d,l).
\end{equation} 
By a diagonalization argument, we may find sequences $\epsilon_i$, $u_i$, and $c_i$ such that $$\mathcal{F}(d,l) = \lim\limits_{i\to \infty} I_{\epsilon_i}[u_i,c_i,(-d,d)\times (-l,l)].$$ We divide $(-d,d)$ into intervals $I_j$ of size $2d/n$ for any $n\in \N.$ For one such interval $I_j$, we must have $\liminf\limits_{i\to \infty} I_{\epsilon_i}[u_i,c_i,I_j\times (-l,l)] \leq \frac{1}{n}\mathcal{F}(d,l).$ Translating the sequence, this implies $$\mathcal{F}\left(\frac{1}{n}d,l\right)\leq \frac{1}{n}\mathcal{F}(d,l).$$ Using this inequality, letting $\alpha=1/n$ in (\ref{scalingRelation}), and by the monotonicity with respect to $l$, we conclude that $$\frac{1}{n}\mathcal{F}(d,l) = \mathcal{F}\left(\frac{1}{n}d,l\right) = \mathcal{F}\left(\frac{1}{n}d,\frac{1}{n}l\right). $$ This implies that $\mathcal{F}$ is independent of $l$, and further we have $$\mathcal{F}(d,l) = 2d\mathcal{F}(1/2,l/2d) = 2d\mathcal{F}(1/2,1/2) = 2d \k(e_y),$$ as desired.

\end{proof}

\begin{remark} \label{rmk:liminfto0}
Let $u_i\in H^{1}((-d,d)\times (-l,l),\R^2)$ and $c_i\in L^2((-d,d)\times (-l,l))$ be such that $u_i \to \bar u_{e_y}$ in $H^1,$ $c_i \to \bar c_{e_y}$ in $L^2,$ and 
\begin{equation}\nonumber
\lim\limits_{i\to \infty}I_{\epsilon_i}[u_i,c_i,(-d,d)\times (-l,l)] = 2d\k(e_y).
\end{equation} 
Then for each $0<h<l$ we have 
\begin{equation}\label{eqn:liminf0}
\lim\limits_{i\to \infty} I_{\epsilon_i}[u_i,c_i,(-d,d)\times ((-l,l)\sm (-h,h))] = 0.
\end{equation}
To see this, we apply Proposition \ref{FonsecaTartarProp} with $l$ and $h$ to find 
$$\lim\limits_{i\to \infty} I_{\epsilon_i}[u_i,c_i,(-d,d)\times (-l,l)] = 2d\k(e_y) = \mathcal{F}_{e_y}(d,l) \leq\liminf\limits_{i\to \infty} I_{\epsilon_i}[u_i,c_i,(-d,d)\times (-h,h)], $$ which implies (\ref{eqn:liminf0}).
\end{remark}

\begin{remark}
The previous proposition continues to hold if $e_y$ is replaced by a different choice of normal $\nu$ of the jumpset so that \begin{equation} \nonumber
\mathcal{F}_{\nu}(d,l) = 2d\k(\nu).
\end{equation}
\end{remark}

With this calculation in hand, we have the following theorem (see the proof of Proposition 3.1 in \cite{ContiSchweizer-Linear}). We note these results may be extended to higher dimensions relatively easily with the aid of the blow-up method (see \cite{davoli2018twowell}, \cite{Fonseca1993-LSC}, \cite{fonseca2007modern}).

\begin{thm} \label{thm:liminf}
Let $\Omega\subset \R^2$ be an open bounded set with Lipschitz continuous boundary. Assume (\ref{coercivity}), (\ref{detcond2}), and (\ref{fcoeffCond}). Then for every $u\in H^1(\Omega,\R^2)$ and $c\in L^2(\Omega),$ every $\epsilon_i \to 0$, and all $\{u_i\}_i$ in $H^1(\Omega, \R^2)$ and $\{c_i\}_i$ in $L^2(\Omega)$ with $u_i\to u$ in $H^1$ and $c_i\to c$ in $L^2,$ it holds $$ \liminf\limits_{i\to \infty}I_{\epsilon_i}[u_i,c_i,\Omega] \geq I_0[u,c,\Omega],$$ where $I_\epsilon$ and $I_0$ are defined in (\ref{energyFunc}) and (\ref{energyFuncGLim}), respectively.
\end{thm}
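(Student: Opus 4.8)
The plan is to run the standard localization/blow-up argument from the theory of $\Gamma$-convergence for interfacial energies, using Proposition \ref{FonsecaTartarProp} as the single-interface model computation. First I would dispose of the trivial cases: if $\liminf_i I_{\epsilon_i}[u_i,c_i,\Omega] = \infty$ there is nothing to prove, so I may pass to a subsequence (not relabeled) along which the liminf is a finite limit and $\sup_i I_{\epsilon_i}[u_i,c_i,\Omega] < \infty$. Then Theorem \ref{thm:compactness} forces $c \in BV(\Omega;\{\mu_0,\mu_1\})$, $e(u) = ce_0$ (using that the $H^1$ and $L^2$ limits are already fixed as $u$ and $c$), so $I_0[u,c,\Omega]$ is given by the integral $\int_{J_c} \k(\nu)\,d\mathcal H^1$; in particular, by Theorem \ref{struct}, $J_c = J_{e(u)}$ is a countable disjoint union of segments with normal equal to one of the (one or two) admissible $\nu$'s from (\ref{R1connection}).

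Next I would set up the measure-theoretic machinery. Define the sequence of nonnegative Radon measures $\mu_i := I_{\epsilon_i}[u_i,c_i,\cdot]\, \mres \Omega$ on the open subsets of $\Omega$ (using the fact, recorded at the end of Section \ref{secmathprelim}, that $I_\epsilon$ is defined as a set function on $\mathcal A(\mathbb R^2)$ and is clearly a measure in its domain argument). Since $\sup_i \mu_i(\Omega) < \infty$, by weak-$*$ compactness I pass to a further subsequence so that $\mu_i \wksto \mu$ for some finite Radon measure $\mu$ on $\Omega$. It then suffices to show $\mu \geq \k(\nu(z))\,\mathcal H^1 \mres J_c$, since by lower semicontinuity of mass under weak-$*$ convergence on open sets, $\liminf_i I_{\epsilon_i}[u_i,c_i,\Omega] \geq \mu(\Omega) \geq \int_{J_c}\k(\nu)\,d\mathcal H^1 = I_0[u,c,\Omega]$. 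By the Besicovitch differentiation theorem, it is enough to show that for $\mathcal H^1$-a.e.\ $z_0 \in J_c$,
\[
\limsup_{r\to 0} \frac{\mu(Q_\nu(z_0,r))}{2r} \geq \k(\nu(z_0)),
\]
where $Q_\nu(z_0,r)$ is the square of side $2r$ centered at $z_0$ with two sides parallel to $\nu = \nu(z_0)$, and $2r$ is the length of the cross-section. I would choose $z_0$ to be a point of approximate continuity of $\nu$ and a Lebesgue point of the density $d(\mathcal H^1\mres J_c)/d\mathcal H^1 = 1$ on $J_c$, so the blown-up configuration of $c$ on $Q_\nu(z_0,r)$ converges to the single-interface profile $\bar c_\nu$.

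Now comes the blow-up step, which is the main obstacle. For a.e.\ radius $r$ (so that $\mu(\partial Q_\nu(z_0,r)) = 0$), weak-$*$ convergence gives $\mu(Q_\nu(z_0,r)) = \lim_i \mu_i(Q_\nu(z_0,r)) = \lim_i I_{\epsilon_i}[u_i,c_i,Q_\nu(z_0,r)]$. Rescaling coordinates by the diffeomorphism $z \mapsto (z - z_0)/r$ (after rotating so $\nu = e_y$), and rescaling $\epsilon_i \to \epsilon_i/r$ as in the scaling relation (\ref{scalingRelation}), I obtain sequences on the fixed box $(-1,1)\times(-1,1)$ whose energies equal $\frac1r I_{\epsilon_i}[u_i,c_i,Q_\nu(z_0,r)]$, up to negligible error. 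The delicate point — exactly as in Conti--Schweizer — is that the rescaled displacements need not converge to $\bar u_{e_y}$ on the nose: one only controls $e(u_i)$, so the rescaled $u_i$ converge to $\bar u_{e_y}$ only up to a sequence of skew-affine maps (infinitesimal rigid motions), and one must absorb these shifts, verify that on most cross-sections the boundary traces are close to those of $\bar u_{e_y}$, and truncate/modify $u_i$ near $\partial Q$ to make it an admissible competitor for $\mathcal F_{e_y}$ without increasing the energy by more than $o(1)$. Here the $\|e(u)-ce_0\|^2$ term plays the role of the well $\min\{\|e(u)-\mu_0e_0\|^2,\|e(u)-\mu_1e_0\|^2\}$ (as noted in the introduction), letting the rigidity/compactness estimates of Theorem \ref{thm:compactness} and Theorem \ref{struct} go through. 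Once this is done, the definition (\ref{interfaceenergyconst}) of $\k$ together with Proposition \ref{FonsecaTartarProp} (and the final remark of Section \ref{secliminf} allowing $e_y$ to be replaced by any admissible $\nu$) yields $\liminf_i \frac1r I_{\epsilon_i}[u_i,c_i,Q_\nu(z_0,r)] \geq \mathcal F_{e_y}(1,1) = 2\,\k(\nu)$, hence $\mu(Q_\nu(z_0,r))/(2r) \gtrsim \k(\nu) + o(1)$ as $r \to 0$, which is the desired density bound and completes the proof.
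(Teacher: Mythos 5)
Your argument is correct, but it takes a genuinely different route from the paper. You run the blow-up method: energy measures $\mu_i$, weak-$*$ compactness, Besicovitch differentiation, and a density lower bound at $\mathcal{H}^1$-a.e.\ point of $J_c$ via rescaling to the unit square and invoking the definition (\ref{interfaceenergyconst}) of $\k(\nu)$ together with Proposition \ref{FonsecaTartarProp}. The paper instead argues directly: after the same reduction via Theorems \ref{thm:compactness} and \ref{struct}, it uses the rigid two-dimensional structure of $J_c$ (a countable union of axis-parallel segments reaching $\partial\Omega$) to select, for each $\theta\in(0,1)$, finitely many slightly shrunken segments capturing a $\theta^2$ fraction of $\mathcal{H}^1(J_c)$, encloses each in a thin disjoint box meeting only that one interface, applies Proposition \ref{FonsecaTartarProp} box by box, sums, and sends $\theta\to1$. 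Your approach is more general (the paper itself remarks it is the natural route to higher dimensions), at the cost of the measure-theoretic overhead; the paper's approach buys elementariness by exploiting Theorem \ref{struct}. One comment on the step you flag as the ``main obstacle'': in this setting it is essentially immediate. Since $u_i\to u$ \emph{strongly} in $H^1$ by hypothesis and Theorem \ref{struct} makes $u$ exactly piecewise affine near an interior point $z_0$ of a segment, the rescaled $u_i$ converge in $H^1$ of the unit square to $\bar u_\nu$ plus a single fixed affine map with skew-symmetric linear part; subtracting it leaves the energy unchanged (it is killed by $e(\cdot)$), and no trace estimates or boundary truncation are needed because the infimum defining $\k(\nu)$ imposes only $H^1\times L^2$ convergence, not boundary conditions. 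The same silent skew-affine normalization is also implicitly used when the paper applies Proposition \ref{FonsecaTartarProp} on each box.
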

\begin{proof}
If $$\liminf_{i\to \infty} I_{\epsilon_i}[u_i,c_i,\Omega] = \infty, $$ then there is nothing to prove. Thus we assume the limit inferior is finite and extracting a subsequence if necessary, we may suppose that the limit inferior is a limit and $\sup_i I_{\epsilon_i}[u_i,c_i,\Omega]<\infty.$ Hence, we are in a position to apply Theorem \ref{thm:compactness} and \ref{struct} to obtain that $c\in BV(\Omega,\{\mu_0,\mu_1\})$ and $e(u) = ce_0$ and that the jumpset of $c$, $J_c,$ can be written as $$J_c = \bigsqcup_j (X_j\times \{y_j\})\sqcup \bigsqcup_j (\{x_j\}\times Y_j),$$ for some $X_j,Y_j$ intervals in $\R,$ where $\bigsqcup$ denotes a disjoint union. As $\mathcal{H}^1(J_c)<\infty,$ for any $\theta\in (0,1)$ we may find $n\in \N$ such that $$\mathcal{H}^1\Big(\bigsqcup\limits_{j=1}^n (X_j\times \{y_j\})\Big)\geq \theta\mathcal{H}^1\Big(\bigsqcup_j (X_j\times \{y_j\})\Big).$$ Scaling the intervals $X_j$, we find intervals $X_j'$ such that for all $j\leq n$, $X_j'\times \{y_j\}$ are compactly contained in $\Omega$ and $$ \mathcal{H}^1\Big(\bigsqcup\limits_{j=1}^n (X_j'\times \{y_j\})\Big)\geq \theta^2\mathcal{H}^1\Big(\bigsqcup_j (X_j\times \{y_j\})\Big).$$ Likewise we find $Y_j'.$

By Theorem \ref{struct}, the compactly contained intervals are disjoint. Furthermore, we claim there is $h>0$ such that each box $X_j'\times (y_j-h,y_j+h)$ and $(x_j-h,x_j+h)\times Y_j'$, with $j\leq n$, intersects only one interface. Let $$K := \bigsqcup\limits_{j=1}^n (X_j'\times \{y_j\})\sqcup \bigsqcup\limits_{j=1}^n (\{x_j\}\times Y_j'), \ \ H:= \bigsqcup\limits_{j=n+1}^\infty (\bar X_j\times \{y_j\})\sqcup \bigsqcup\limits_{j=n+1}^\infty (\{x_j\}\times \bar Y_j).$$ 
By Theorem \ref{struct}, we have that $\bar K$ and $H$ are disjoint. Furthermore, there cannot be $x\in \bar K \cap (\bar H\setminus H)$ as $\bar H\setminus H\subset \partial \Omega$. To see this last claim, suppose $x\in \bar H\setminus H$. Thus there must be a subsequence of distinct interfaces $\{\mathcal{I}_{j_k}\}_{k\in \N}$ such that $\mathcal{I}_{j_k} =  X_{j_k}\times \{y_{j_k}\}$ or $\mathcal{I}_{j_k} = \{x_{j_k}\}\times Y_{j_k} $ with $j_k>n$ such that $B(x,1/j_k)\cap \mathcal{I}_{j_k}\neq \emptyset.$ As the interfaces are distinct and $\mathcal{H}^1(J_c)<\infty,$ it follows $\mathcal{H}^1(\mathcal{I}_{j_k})\to 0$. Consequently, $$\text{dist}(x,\partial \Omega)\leq 1/j_k+\mathcal{H}^1(\mathcal{I}_{j_k})\to 0 $$ proving the claim. Hence the sets $\bar K$ and $\bar H$ are disjoint, which shows that such an $h$ exists.

Using Proposition \ref{FonsecaTartarProp}, we find 
\begin{align*}
\liminf\limits_{i\to \infty} & \ I_{\epsilon_i}[u_i,c_i,\Omega]\\
\geq & \sum\limits_{i=1}^n \liminf\limits_{i\to \infty}\Big(I_{\epsilon_i}[u_i,c_i,X_j'\times (y_j-h,y_j+h)]+I_{\epsilon_i}[u_i,c_i,(x_j-h,x_j+h)\times Y_j']\Big)\\
\geq & \sum\limits_{i=1}^n (\mathcal{L}^1(X_j')k(e_y)+\mathcal{L}^1(Y_j')k(e_x)) \geq  \theta^2 \int_{J_c} k(\nu) \ d \mathcal{H}^1.
\end{align*}
Letting $\theta\to 1,$ we complete the proof.
\end{proof}

\section{Characterization of interfacial energy}\label{sec:charInterface} 
In this section, we characterize the interfacial energy on a box in terms of $\k (e_y),$ defined in (\ref{interfaceenergyconst}), via the following theorem.
\begin{thm}\label{thm:charInterface}
Let $\epsilon_i\to 0,$ $l>0$, and $d>0$. There exists sequences $u_i\to  \bar u_{e_y}$ in $H^1((-d/2,d/2)\times (-l,l), \R^2)$ and $c_i\to \bar c_{e_y}$ in $L^2((-d/2,d/2)\times (-l,l))$ such that 
\begin{equation}\label{lim:charInterface}
\lim\limits_{i\to \infty} I_{{ \epsilon_i}}[ u_i, c_i,(-d/2,d/2)\times (-l,l)] = d\k(e_y).
\end{equation} 
Furthermore, $\bar c_i = \bar c$ and $\bar u_i = \bar u +\chi_{y<0}(R_{\phi_i}(x,y)^T+a_i)$ in some neighborhood of the upper and lower boundaries $\{(x,y)\in (-d/2,d/2)\times\R : y = \pm l\}$, where $|\phi_i|+|a_i|\to 0,$ and $R_\phi$ is defined in (\ref{def:skewsymRot}).
\end{thm}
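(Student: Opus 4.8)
The plan is to start from the definition of $\k(e_y)$ in (\ref{interfaceenergyconst}): by definition there exist $\epsilon_i \to 0$ (which, by a diagonalization argument over the net, we may take to be the prescribed sequence) and sequences $v_i \to \bar u_{e_y}$ in $H^1(\Q_{e_y},\R^2)$, $d_i \to \bar c_{e_y}$ in $L^2(\Q_{e_y})$ with $\liminf_i I_{\epsilon_i}[v_i,d_i,\Q_{e_y}] = \k(e_y)$; passing to a further subsequence we may assume the $\liminf$ is a limit. Using the scaling relation from the proof of Proposition \ref{FonsecaTartarProp} (i.e. $\bar u_i(z) := \alpha v_i(z/\alpha)$, $\bar c_i(z) := d_i(z/\alpha)$, $\bar\epsilon_i := \alpha\epsilon_i$ and the fact that $\F_{e_y}$ is independent of $l$), I would first rescale so that the competitor lives on $(-d/2,d/2)\times(-l',l')$ and has energy tending to $d\k(e_y)$ on that rectangle for $l'$ as large as we like. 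The point of taking $l'$ large is Remark \ref{rmk:liminfto0}: the energy concentrated near $y=\pm l'$ is negligible, so after restricting to $(-d/2,d/2)\times(-l,l)$ for the fixed $l$ we still have energy converging to $d\k(e_y)$, and moreover $\int I_{\epsilon_i}$ on thin strips near $y=\pm l$ tends to $0$.

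The main work is the boundary modification: producing $u_i, c_i$ that agree with $\bar u_{e_y}$ (up to an infinitesimal skew-affine shift) and $\bar c_{e_y}$ in a neighborhood of $\{y = \pm l\}$ while preserving the energy limit. For $c_i$ this is the standard Modica–Mortola cut-and-interpolate trick: since $c_i \to \bar c_{e_y}$ in $L^2$ and the energy near $y=\pm l$ is small, on a strip $l - \delta < |y| < l$ one can, by a mean-value/Fubini argument, find slices $y = \pm y_i$ (with $y_i \to l$) on which $c_i$ is close to the constants $\mu_0$ (for $y<0$) or $\mu_1$ (for $y>0$) in trace and the Dirichlet energy on that slice is small; then redefine $c_i$ to interpolate linearly (in $y$) between its value on the slice and the exact constant over a short transition layer, and set it equal to the constant beyond. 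The extra energy so introduced is controlled by the smallness of $f$ near its wells (Proposition \ref{prop:ffunc}) and the smallness of the slice energies, hence $\to 0$. For $u_i$ one does the analogous thing, but the target is not $\bar u_{e_y}$ itself but $\bar u_{e_y}$ plus a skew-affine correction $R_{\phi_i}(x,y)^T + a_i$: on the good slices $y = \pm y_i$ the trace of $u_i - \bar u_{e_y}$ is $H^{1/2}$-small, and one extends it into the buffer region by a harmonic (or simply affine-in-$y$) extension, absorbing the rigid part into $(\phi_i, a_i)$. Crucially $e(u)$ is insensitive to the skew part, so the elastic term $\tfrac1\epsilon\|e(u_i)-c_ie_0\|^2$ only sees the symmetric part of the modification, which is small; combined with the fact that $\nabla u_i$ is already close to $\nabla\bar u_{e_y}$ in $L^2$ on the strip, the added energy vanishes.

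After these modifications, the $\liminf$ (hence $\limsup$) of $I_{\epsilon_i}[u_i,c_i,(-d/2,d/2)\times(-l,l)]$ is still $\geq \F_{e_y}(d/2,l) = d\k(e_y)$ by the lower bound of Proposition \ref{FonsecaTartarProp} applied to the admissible modified sequence, and $\leq d\k(e_y) + o(1)$ by the energy bookkeeping above, so (\ref{lim:charInterface}) holds; the boundary-agreement statement holds by construction. The step I expect to be the main obstacle is the control of the elastic term under the $u_i$-modification: one must verify that the harmonic extension of an $H^{1/2}$-small boundary datum, after subtracting its optimal skew-affine part, contributes $o(\epsilon_i)$ to $\int \|e(u_i)-c_ie_0\|^2$ — this requires quantifying "small trace $\Rightarrow$ small symmetrized-gradient energy" uniformly and checking the interaction between the buffer layer width and the rate $\epsilon_i \to 0$. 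A careful choice of the slices $y=\pm y_i$ (via Chebyshev on the integrated slice energies) and of the buffer width tending to $0$ slowly relative to $\epsilon_i$ should close this.
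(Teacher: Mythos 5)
Your overall architecture (restrict near-optimal competitors to a sub-box, then modify them near $y=\pm l$ to enforce the affine/constant boundary data) matches the paper's, but there are two genuine gaps. First, you cannot ``by a diagonalization argument over the net\ldots take $\epsilon_i$ to be the prescribed sequence.'' Diagonalization over the definition (\ref{interfaceenergyconst}) produces \emph{some} sequence $\bar\epsilon_i\to 0$ together with competitors realizing $\k(e_y)$; it gives no control over an arbitrary prescribed sequence $\epsilon_i$, and a priori the infimum taken with $\epsilon_i$ fixed could be strictly larger than $\k(e_y)$. This is exactly why the paper needs Step II (Theorem \ref{boxseq}): one transfers optimality from $\bar\epsilon_{j(i)}$ to $\epsilon_i$ using the scaling $I_{\alpha\epsilon}[\alpha u(\cdot/\alpha),c(\cdot/\alpha),\alpha A]=\alpha I_\epsilon[u,c,A]$ with $\alpha_i=\epsilon_i/\bar\epsilon_{j(i)}\to\infty$, then selects a good horizontal sub-box of width $2d$ from the blown-up strip, and then must carry out a delicate vertical truncation (the transition-counting argument with $f_0,f_1$, the geodesic-distance Lemma \ref{energylines}, and a compactness/contradiction argument) to recover convergence of the rescaled competitors to $\bar u_{e_y},\bar c_{e_y}$ up to translation and skew-affine shift. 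None of this is present in your proposal, and without it the theorem is only proved for one particular sequence of $\epsilon$'s.

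Second, your mechanism for the boundary modification of $u_i$ does not close. A Fubini/mean-value selection of slices controls quantities like $\int_{(-d,d)\times\{y\}}\|e(u_i)-c_ie_0\|^2\,dx$, but the restriction of the symmetrized gradient to a horizontal line does not control the $H^{1/2}$ seminorm of the trace of $u_i$ on that line, which is the quantity you need with the sharp rate $\|u_i-w_y\|^2_{H^{1/2}}\le C\eta_i\epsilon_i$ to make the cut-off term $\epsilon_i^{-3}\int\|u_i-w_i-v_i\|^2$ vanish (see Remark \ref{rmk:poincareChallenge}: the Poincar\'e inequality with mean subtraction gives only $\max\{\epsilon_i,d\}^2/\epsilon_i^3$, which blows up). The paper obtains the needed trace bound in Theorem \ref{H1/2bound} via the Conti--Schweizer grid construction (Theorem \ref{gridenergythm}, Lemmas \ref{conti4.4} and \ref{lem:adjacentBoxes}): affine approximations with controlled rotation parameters on a family of parallelogram grids refining toward the slice, assembled into a piecewise-affine comparison function whose symmetrized gradient is $O(\eta\epsilon)$ in $L^2$, followed by Korn and the trace theorem. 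You correctly flag this as the main obstacle, but the harmonic-extension-of-a-small-trace idea presupposes the very estimate that is the hard part of the proof.
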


To motivate the criticality of the above theorem, when proving the $\limsup$ bound, we will need to construct a minimizing sequence of functions for a relatively generic domain. To construct such a sequence, we will interpolate between minimizing sequences for boxes containing a single interface. Accepting that this will be the applied methodology, a theorem like the above is crucial to interpolation. We note however that there are other possible methods including proof of an $H^{1/2}$ bound for a general domain or box (see Theorem \ref{H1/2bound} and \cite{davoli2018twowell}).

As the proof of Theorem \ref{thm:charInterface} is involved, we decompose it into three steps.

\begin{itemize}
\item[\textbf{Step I}] \label{stepii1}
Suppose $$\lim_i I_{\epsilon_i}[u_i,c_i,(-2d,2d)\times(-l,l)]=4d\k(e_y),$$ with $u_i\to\bar u_{e_y}$ and $c_i\to \bar c_{e_y}$. We will find new sequences $\bar u_i \to \bar u_{e_y}$ and $\bar c_i \to \bar c_{e_y}$ such that $$\limsup_i I_{\epsilon_i}[\bar u_i,\bar c_i,(-d/2,d/2)\times(-l,l)]\leq d\k(e_y).$$ Furthermore both $\bar c_i = \bar c_{e_y}$ and $\bar u_i = \bar u_{e_y} +(R_{\phi_i}(x,y)^T+a_i)\chi_{y<0}$ in some neighborhood of the upper and lower boundaries $\{(x,y)\in (-d/2,d/2)\times\R : y = \pm l\}$, where $|\phi_i|+|a_i|\to 0.$ See Theorem \ref{boxenergy}.

\item[\textbf{Step II}] \label{stepii2}
Let $\epsilon_i\to 0,$ $l>0$, and $d>0$. There exists sequences $u_i\to \bar u_{e_y}$ and $c_i\to \bar c_{e_y}$ such that 
\begin{equation}\nonumber
\lim\limits_{i\to \infty} I_{{ \epsilon_i}}[ u_i, c_i,(-d,d)\times(-l,l)] = 2d\k(e_y).
\end{equation} 
See Theorem \ref{boxseq}.

\item[\textbf{Step III}]\label{stepii3} We bring together the previous two steps to complete the proof of Theorem \ref{thm:charInterface}.

\end{itemize}

\subsection*{Proof of Step I}
In the following we fix $l>0$ and for $d>0$ and $\epsilon_i \in \R$ let 
\begin{equation}\label{def:DBox}
\begin{aligned}
 D_{d} &:= (-d,d)\times (-l,l),\quad D_{d,\epsilon_i} := \{(x,y)\in D_{d}:y_i\leq y\leq y_i+\epsilon_i\},\\
D_{d,\epsilon_i}^- &:= \{(x,y)\in D_{d}:y< y_i\},\quad D_{d,\epsilon_i}^+ := \{(x,y)\in D_{d}: y_i+\epsilon_i<y\}.
\end{aligned}
\end{equation}

\begin{thm}\label{boxenergy}
Let $d>0.$ Assume that (\ref{coercivity}), (\ref{detcond2}), and (\ref{fcoeffCond}) hold, and suppose
\begin{equation} \label{hyp:boxenergy}
\lim_i I_{\epsilon_i}[u_i,c_i,D_{2d}]=4d\k(e_y),
\end{equation} with $u_i\to \bar u_{e_y}$ in $H^1(D_{2d},\R^2)$ and $c_i\to \bar c_{e_y}$ in $L^2(D_{2d}),$ where $\k(e_y)$ and $\bar{u}_{e_y}$ are defined in (\ref{interfaceenergyconst}) and (\ref{unucnu}) respectively. We may find new sequences $\bar u_i \to \bar u_{e_y}$ and $\bar c_i \to \bar c_{e_y}$ in the same respective spaces such that $$\lim_i I_{\epsilon_i}[\bar u_i,\bar c_i,D_{d/2}]= d\k(e_y).$$ Furthermore both $\bar c_i = \bar c_{e_y}$ and $\bar u_i = \bar u_{e_y} +(R_{\phi_i}(x,y)^T+a_i)\chi_{\{y<0\}}$ in some neighborhood of the upper and lower boundaries of $D_{2d}$, where $|\phi_i|+|a_i|\to 0.$  
\end{thm}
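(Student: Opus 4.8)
The plan is to carry out the two size reductions separately: first a horizontal localization that passes from $D_{2d}=(-2d,2d)\times(-l,l)$ to a box of half-width $d/2$, then a vertical boundary cleanup near $\{y=\pm l\}$.

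\emph{Horizontal localization.} Split $(-2d,2d)$ into four length-$d$ intervals $I_1,\dots,I_4$ and write $a_i^j:=I_{\epsilon_i}[u_i,c_i,I_j\times(-l,l)]$, so $\sum_j a_i^j=I_{\epsilon_i}[u_i,c_i,D_{2d}]\to 4d\k(e_y)$. Since $I_{\epsilon_i}$ is unchanged under $x$-translation, since $u_i|_{I_j\times(-l,l)}\to\bar u_{e_y}$, and since translating a copy of $I_j\times(-l,l)$ onto $(-d/2,d/2)\times(-l,l)$ changes $\bar u_{e_y}$ only by a single additive constant (the two half-plane pieces are shifted by the same vector because (\ref{R1connection}) makes $\bar u_{e_y}$ continuous across $\{y=0\}$, which forces the skew entry of $S_{e_y}$ to be $\mu_1-\mu_0$), Proposition~\ref{FonsecaTartarProp} (with $d$ replaced by $d/2$) gives $\liminf_i a_i^j\ge d\k(e_y)$ for each $j$. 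Together with $\sum_j a_i^j\to 4d\k(e_y)$ this forces $\lim_i a_i^j=d\k(e_y)$ for every $j$. Fixing one $I_j$, translating onto $(-d/2,d/2)$, and absorbing the additive constant produces sequences on $D_{d/2}$ converging to $(\bar u_{e_y},\bar c_{e_y})$ with energy tending to $d\k(e_y)$; relabel them $(u_i,c_i)$.

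\emph{Vertical localization.} By Remark~\ref{rmk:liminfto0}, $I_{\epsilon_i}[u_i,c_i,(-d/2,d/2)\times\{h<|y|<l\}]\to 0$ for each $h\in(0,l)$. Treat $\{y=l\}$ (the case $y=-l$ is symmetric). Via a De Giorgi averaging argument over heights $\lambda\in(l/2,l)$ choose $\lambda_i\uparrow l$ so that the slice energy is small, $c_i(\cdot,\lambda_i)\to\mu_1$ in $L^1(-d/2,d/2)$, and $\nabla u_i(\cdot,\lambda_i)$ is $L^2$-close to the constant $\mu_1 e_0+S_{e_y}$. In a thin layer $(\lambda_i,\lambda_i+\sigma_i)$ with $\epsilon_i\ll\sigma_i\to 0$, set $\bar c_i$ to be the Modica--Mortola optimal one-dimensional profile in $y$ joining $c_i(\cdot,\lambda_i)$ to $\mu_1$, so the interfacial energy of the layer is $\lesssim\|G(c_i(\cdot,\lambda_i))-G(\mu_1)\|_{L^1}\to 0$ where $G'=\sqrt f$; put $\bar c_i\equiv\mu_1$ above the layer; and let $\bar u_i$ interpolate between $u_i|_{y=\lambda_i}$ and an affine map $(\mu_1 e_0+S_i)z+b_i$, with $S_i,b_i$ the best affine fit of $u_i$ on the layer (so that $\|u_i-((\mu_1 e_0+S_i)z+b_i)\|_{H^1(\text{layer})}$ is controlled by the layer energy via Korn, whence $S_i\to S_{e_y}$, $b_i\to 0$ because $u_i\to\bar u_{e_y}$), extended affinely above. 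Doing the same at $\{y=-l\}$ with affine target $(\mu_0 e_0+S_i')z+b_i'$, and then applying the global skew-affine shift $z\mapsto z-((S_i-S_{e_y})z+b_i)$, makes $\bar u_i\equiv\bar u_{e_y}$ near $\{y=l\}$, leaves the limit unchanged, and turns the lower data into $\bar u_{e_y}+R_{\phi_i}z+a_i$ with $R_{\phi_i}:=S_i'-S_i+S_{e_y}$ skew and $a_i:=b_i'-b_i$, both tending to $0$ — exactly the claimed form. The liminf bound of Proposition~\ref{FonsecaTartarProp} already gives $\liminf_i I_{\epsilon_i}[\bar u_i,\bar c_i,D_{d/2}]\ge d\k(e_y)$, so it remains only to show the modified layers carry asymptotically no energy.

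\emph{The main obstacle.} This last point is the crux. Because of the term $\frac1{\epsilon_i}\|e(\bar u_i)-\bar c_i e_0\|^2$ the cut-offs defining $\bar u_i$ and $\bar c_i$ are coupled, and they generate contributions of order $\frac1{\epsilon_i\sigma_i^2}\|u_i-((\mu_1 e_0+S_i)z+b_i)\|_{L^2(\text{layer})}^2$ and $\frac1{\epsilon_i}\|c_i-\mu_1\|_{L^2(\text{layer})}^2$ which are \emph{not} controlled by the energy alone ($\int\|\nabla c_i\|^2$ is only $O(1/\epsilon_i)$ and $\int|c_i-\mu_1|^2$ only $o(1)$, not $O(\epsilon_i)$). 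Overcoming this needs: a preliminary cleaning of $c_i$ near $\{y=\pm l\}$ that removes, at $o(1)$ energy cost, the pockets where $c_i$ sits near the wrong well, so that afterwards the super-quadratic bound $f(c_i)\gtrsim|c_i-\mu_1|^2$ applies there and hence $\|c_i-\mu_1\|_{L^2}^2\lesssim\epsilon_i$ on the relevant strip; a choice of $\sigma_i$ (with $\epsilon_i\ll\sigma_i$) large enough that the Korn/Poincaré constants on the thin layer, which degenerate like $\sigma_i^{-2}$, are still beaten, using the equi-integrability of $\{|\nabla u_i|^2\}$ from strong $H^1$ convergence; and a final diagonalization over the layer parameters. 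The horizontal localization and the skew-affine bookkeeping are routine; it is the quantitative control of the transition layers against the singular weight $1/\epsilon_i$ where the real work lies.
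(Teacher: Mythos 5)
Your outline correctly isolates the crux, but the resolution you sketch for it does not work, and it is precisely the point where the paper has to do all of its heavy lifting. The bad term you identify is
$\frac{1}{\epsilon_i\sigma_i^{2}}\|u_i-w_i\|^2_{L^2(\text{layer})}$ with $w_i$ affine. The tools you invoke --- equi-integrability of $\|\nabla u_i\|^2$ from strong $H^1$ convergence, a Fubini/averaging choice of the layer, and Korn or Poincar\'e on the layer --- can only yield $\|\nabla u_i-\nabla w_i\|^2_{L^2(\text{layer})}\lesssim \eta_i\,\sigma_i$ and hence $\|u_i-w_i-(\text{const})\|^2_{L^2(\text{layer})}\lesssim d^{2}\eta_i\sigma_i$, because the Poincar\'e/Korn constant of a strip $(-d/2,d/2)\times(0,\sigma_i)$ with only a mean (or best rigid motion) subtracted is governed by the \emph{long} side $d$, not by $\sigma_i$ (it in fact degenerates further for Korn). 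This leaves the bad term of size $d^{2}\eta_i/(\epsilon_i\sigma_i)$, where $\eta_i:=\|u_i-\bar u_{e_y}\|^2_{H^1}+\dots\to 0$ is a rate that bears \emph{no relation} to $\epsilon_i$; no choice of $\sigma_i\to 0$ (nor any diagonalization over layer parameters) can make $\eta_i/(\epsilon_i\sigma_i)$ vanish when $\eta_i\gg\epsilon_i$. The same mismatch kills the slice version: averaging only gives $\|u_i(\cdot,\lambda_i)-w_i\|^2_{L^2(\text{slice})}=O(\eta_i)$, whereas your cross term needs $o(\epsilon_i\sigma_i)$. This is exactly the obstruction recorded in Remark~\ref{rmk:poincareChallenge}. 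Your ``preliminary cleaning'' of $c_i$ is also not available at $o(1)$ energy cost: on the pockets where $c_i$ sits near the wrong well $\mu_0$, $f(c_i)\approx 0$, so the energy gives no control on their measure (only the $L^2$ convergence does, again with no rate relative to $\epsilon_i$), and replacing $c_i$ there by a value in the basin of $\mu_1$ creates a potential/elastic contribution of order $\mathcal{L}^2(\text{pockets})/\epsilon_i$, which is uncontrolled unless the modification is confined to a layer of thickness $\epsilon_i$ chosen by averaging (this is how the paper handles the term $A_{22}$).

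What is actually needed, and what the paper supplies, is a trace estimate with an $\epsilon_i$ gain: for a large set of heights $y_0$ there is an affine $w_{y_0}$ with $e(w_{y_0})=\mu_1e_0$ and $\|u_i-w_{y_0}\|^2_{H^{1/2}((-d/2,d/2)\times\{y_0\})}\leq C\eta_i\epsilon_i$ (Theorem~\ref{H1/2bound}). This is not a soft consequence of low energy; it is proved via the Conti--Schweizer multiscale grid construction (Theorem~\ref{gridenergythm}, Lemmas~\ref{conti4.4} and~\ref{lem:adjacentBoxes}), which builds a piecewise-affine comparison map from line averages on a dyadically refining family of parallelograms and tracks the rotation parameters between adjacent cells. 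One then extends $u_i-w_{y_0}$ from the slice into the upper region with $H^1$ norm squared $\lesssim\eta_i\epsilon_i$ (the Gagliardo-type Lemma~\ref{traceext}), takes the transition layer of thickness exactly $\epsilon_i$, and applies the \emph{one-sided} Poincar\'e inequality to $u_i-w_i-v_i$, which vanishes on the bottom of the layer; only then does the Poincar\'e constant become $\epsilon_i^2$ and the cross term close. Your proposal contains no substitute for this $H^{1/2}$ estimate, so the argument as written has a genuine gap at its central step. (The horizontal localization via the four-interval splitting and translation invariance, and the skew-affine bookkeeping at the two boundaries, are fine.)
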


\begin{remark} \label{rmk:poincareChallenge}
A standard approach to proving this type of theorem (for the top boundary) for first order Cahn-Hilliard functionals would involve sequences as given by the following: Let $\psi:\R\to [0,1]$ be a smooth cutoff function with $\psi(x) =1$ for $x<0$ and $\psi(x)=0$ for $x>1$. For some $y_i\in (l/4,3l/4)$ to be determined, let $\psi_i(x,y) := \psi((y-y_i)/\epsilon_i)$ and define 
\begin{align*}
& \bar u_i := \psi_i\Big(u_i -\intbar_{D_{2d,\epsilon_i}}(u_i-\bar u_{e_y}) \ dz \Big)+(1-\psi_i)\bar u_{e_y},\\
& \bar c_i := \psi_i c_i+(1-\psi_i)\bar c_{e_y}.
\end{align*}
Analyzing the energy, it turns out that the elastic energy presents the main difficulty, wherein we have an energy term of the form
\begin{align*}
\int_{D_{2d,\epsilon_i}}  \frac{1}{\epsilon_i}\left\|(u_i-\bar u_{e_y}- \intbar_{D_{2d,\epsilon_i}}(u_i-\bar u_{e_y}) \ dw )\otimes \nabla \psi_i\right\|^2 \ dz  & \approx  \\
 \int_{D_{2d,\epsilon_i}}  \frac{1}{\epsilon_i^3} & \left\|(u_i-\bar u_{e_y}- \intbar_{D_{2d,\epsilon_i}}(u_i-\bar u_{e_y}) \ dw )\right\|^2 \ dz.
\end{align*}
Here we see that the mean subtraction was introduced in hopes that the Poincar\'e inequality (see \cite{LeoniBook}) might suffice to bound the term. However, with this we have 

$$\int_{D_{2d,\epsilon_i}}  \frac{1}{\epsilon_i^3}\left\|(u_i-\bar u_{e_y}- \intbar_{D_{2d,\epsilon_i}}(u_i-\bar u_{e_y}) \ dw)\right\|^2 \ dz \leq \int_{D_{2d,\epsilon_i}}  \frac{\max\{\epsilon_i,d\}^2}{\epsilon_i^3}\|\nabla (u_i-\bar u_{e_y})\|^2 \ dz, $$
which cannot be controlled via averages as $\epsilon_i<d$ for large $i$. Consequently, it is crucial that we apply the Poincar\'e inequality for $H^1_0$, in some sense, which will replace the maximum in the above inequality with $\epsilon_i$ itself.
\end{remark}
To prove Theorem \ref{boxenergy} and overcome the challenges posed by Remark \ref{rmk:poincareChallenge}, we derive an $H^{1/2}$ bound for low energy functions which will help to control the trace of $u$ on $D_{2d,\epsilon_i}$. The proof relies on ideas of Conti and Schweizer (see Section 4 of \cite{ContiSchweizer-Linear}) who derive an analogous bound for functionals of the form $F_\epsilon$ (see (\ref{csEnergyFunc})), as mentioned in the introduction.

We prove a lemma which allows us to control some energies via averages.

\begin{lem}\label{energylemma}
Let $\eta>0$. Supposing $r:[a,b]\to[0,\infty)$ is an integrable function with $\int_{a}^{b}r\ dx\leq \eta,$ then for any $\theta\in (0,1)$ there exists a measurable set $E_\theta\subset [a,b]$ with measure at least $\theta(b-a)$ such that $$r\leq \frac{\eta}{(1-\theta)(b-a)} \quad \text{ on } E_\theta. $$ 
\end{lem}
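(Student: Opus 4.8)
The plan is to prove this purely measure-theoretically by a contradiction argument (equivalently, a direct Chebyshev/Markov-type estimate). Fix $\eta > 0$, $\theta \in (0,1)$, and an integrable $r : [a,b] \to [0,\infty)$ with $\int_a^b r \, dx \leq \eta$. Set $\lambda := \frac{\eta}{(1-\theta)(b-a)}$ and define the ``bad'' set $B := \{x \in [a,b] : r(x) > \lambda\}$, which is measurable since $r$ is. The claim is equivalent to showing $|B| \leq (1-\theta)(b-a)$, because then $E_\theta := [a,b] \setminus B$ is measurable, has measure at least $\theta(b-a)$, and satisfies $r \leq \lambda$ on $E_\theta$ by construction.

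To get the bound on $|B|$, I would integrate the trivial inequality $r(x) > \lambda \cdot \chi_B(x)$ over $[a,b]$:
\begin{equation} \nonumber
\lambda \, |B| = \int_a^b \lambda \chi_B \, dx \leq \int_B r \, dx \leq \int_a^b r \, dx \leq \eta.
\end{equation}
Hence $|B| \leq \eta / \lambda = (1-\theta)(b-a)$, which is exactly what is needed. Taking complements finishes the argument; one should note $E_\theta$ is measurable as the complement of a measurable set in $[a,b]$, and $|E_\theta| = (b-a) - |B| \geq (b-a) - (1-\theta)(b-a) = \theta(b-a)$.

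There is essentially no obstacle here — the only thing to be slightly careful about is the degenerate edge case $b = a$ (in which $\lambda$ is undefined), but that case is vacuous since then $[a,b]$ is a single point (or the statement is read with $b > a$ implicit), and any $E_\theta$ works; I would simply assume $b > a$, as is implicit in writing $r : [a,b] \to [0,\infty)$ as a genuine interval. The estimate is just Markov's inequality applied to $r$ with threshold $\lambda$, repackaged so that the ``good'' set $E_\theta$ is large rather than the ``bad'' set being small. This lemma will later be applied with $r$ a slicewise energy density (energy of $u_i, c_i$ restricted to horizontal slices of $D_{2d}$) to select a slice — indeed a set of slices of large measure — on which the energy is comparable to its average, which is the mechanism for choosing the cutoff location $y_i$ in the proof of Theorem \ref{boxenergy}.
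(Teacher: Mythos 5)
Your proof is correct and is essentially the paper's argument: both reduce to Markov's inequality for the set $\{r > \eta/((1-\theta)(b-a))\}$, the only difference being that you estimate its measure directly while the paper phrases the same bound as a contradiction. No gap.
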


\begin{proof}
Proceeding by contradiction, we have $\mathcal{L}^1(\{r\leq \frac{\eta}{(1-\theta)(b-a)}\})<\theta (b-a).$ Thus $\mathcal{L}^1(\{r> \frac{\eta}{(1-\theta)(b-a)}\})\geq (1-\theta)(b-a),$ which implies that $\int_a^b r \ dx>\eta,$ a contradiction.
\end{proof}

\begin{thm}\label{H1/2bound}
Assume (\ref{coercivity}), (\ref{detcond2}), and (\ref{fcoeffCond}) hold. Given $d>0,l_1>l_0$, $c\in H^1((-d,d)\times (l_0,l_1))$, and $u\in C^2((-d,d)\times (l_0,l_1), \R^2)$, there are constants $\eta_0,C>0$ such that if $(\zeta_u,\zeta_c)\in\{(\mu_0e_0,\mu_0),(\mu_1e_0,\mu_1)\},$ $$I_\epsilon[u,c,(-d,d)\times (l_0,l_1)]\leq \eta \leq \eta_0,$$ and $$\|e(u) - \zeta_u\|^2_{L^2((-d,d)\times (l_0,l_1))}+\|c-\zeta_c\|^2_{L^2((-d,d)\times (l_0,l_1))}\leq \eta,$$
then for some set $E\subset (l_0,l_1)$ with $\mathcal{L}^1(E)>\frac{l_1-l_0}{2},$ we have the following: 
For all $y\in E$ there is an affine function $w_y:\R^2\to \R^2$ with $e(w_y) = \zeta_u$ such that $$\|u- w_y\|^2_{H^{1/2}((-d/2,d/2)\times \{y\})} \leq C\eta\epsilon.$$
\end{thm}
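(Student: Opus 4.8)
The plan is to follow the strategy of Conti and Schweizer (Section 4 of \cite{ContiSchweizer-Linear}), adapted to the first-order functional $I_\epsilon$. The starting point is that on the slab $(-d,d)\times(l_0,l_1)$ the elastic coercivity (\ref{coercivity}) together with the energy bound gives
$$
\int_{(-d,d)\times(l_0,l_1)} \tfrac{1}{\epsilon}\|e(u)-ce_0\|^2\,dz \le C\eta,
$$
and by hypothesis $\|c-\zeta_c\|_{L^2}^2\le\eta$, so after replacing $ce_0$ by $\zeta_u=\zeta_c e_0$ at the cost of a term of size $\eta$ we get $\|e(u)-\zeta_u\|_{L^2}^2\le C\eta\epsilon$ on the slab (in fact the $L^2$ bound on $e(u)-\zeta_u$ is already assumed, but the $\epsilon$-weighted version is what carries the interfacial scaling). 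The idea is then to work slice by slice in $y$: define for each $y$ the quantity
$$
r(y):=\int_{-d}^{d}\Big(\tfrac{1}{\epsilon}f(c)+\epsilon\|\nabla c\|^2+\tfrac{1}{\epsilon}\|e(u)-\zeta_u\|^2+\text{(lower order)}\Big)(x,y)\,dx,
$$
so that $\int_{l_0}^{l_1} r\,dy\le C\eta$. Apply Lemma \ref{energylemma} with $\theta=\tfrac12$ to obtain a set $E\subset(l_0,l_1)$ of measure $>\tfrac{l_1-l_0}{2}$ on which $r(y)\le C\eta/(l_1-l_0)$ pointwise; this is the set in the statement.

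For a fixed good slice $y\in E$, I would build the affine comparison map $w_y$ from the one-dimensional data on the horizontal segment. The key point is that rigidity for the \emph{incompatible} field $e(u)-\zeta_u$ must be upgraded: since $e(u)$ is close in $L^2$ to the constant symmetric matrix $\zeta_u$, a geometric rigidity / Korn-type estimate gives a single skew-symmetric matrix $R$ and a vector $b$ (depending on $y$) such that $u - (\zeta_u + R)(x,y)^T - b$ is controlled in $H^1$ on the slab near that slice, with the $H^1$ deficit bounded by $C\eta\epsilon$. Restricting to the segment $(-d/2,d/2)\times\{y\}$ and using the trace/interpolation inequality $\|v\|_{H^{1/2}(\text{segment})}\lesssim \|v\|_{L^2(\text{slab slice})}^{1/2}\|v\|_{H^1(\text{slab slice})}^{1/2}$ (or the standard trace theorem $H^1(\text{2D strip})\hookrightarrow H^{1/2}(\text{1D slice})$ localized to the good slices), one converts the $H^1$-closeness on a thin strip around $y$ into the desired $H^{1/2}$ bound $\|u-w_y\|_{H^{1/2}((-d/2,d/2)\times\{y\})}^2\le C\eta\epsilon$, with $w_y(x,y):=(\zeta_u+R)(x,y)^T+b$ so that $e(w_y)=\zeta_u$. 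Shrinking from $(-d,d)$ to $(-d/2,d/2)$ is what gives room for the trace estimate and absorbs boundary effects; the smoothness hypothesis $u\in C^2$ is used only to justify the slicing and the pointwise manipulations, and all constants are ultimately independent of $\epsilon$ and the particular low-energy function.

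The main obstacle is the rigidity step: one needs that $e(u)$ being $L^2$-close to a constant matrix forces $u$ to be $H^1$-close to a single affine map with that symmetrized gradient, quantitatively and with the correct $\epsilon$-scaling, on a strip whose width one is free to choose among the good slices. Unlike the compatible case this requires a Korn-type inequality for the difference $u-w$ where $w$ ranges over the affine maps with $e(w)=\zeta_u$ (a finite-dimensional family), i.e. controlling the skew part; the subtlety is that the constant in such an estimate on a thin strip could degenerate, so one must choose the strip to be a fixed fraction of $(l_0,l_1)$ (hence the $\tfrac12$ in Lemma \ref{energylemma}) and exploit that we only need the estimate on the good slices $E$, not everywhere. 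A secondary technical point is bookkeeping the two constants $\eta_0$ and $C$: $\eta_0$ must be small enough that $f$'s superquadratic behavior at the wells (Proposition \ref{prop:ffunc}) lets us conclude $c$ is genuinely close to $\zeta_c$ on most of each good slice, which is needed to replace $c$ by $\zeta_c$ in the elastic term without losing the $\epsilon$ factor.
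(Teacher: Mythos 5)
There is a genuine gap, and it sits exactly at the step you yourself flag as ``the main obstacle.'' Your claim that replacing $ce_0$ by $\zeta_u$ ``at the cost of a term of size $\eta$'' yields $\|e(u)-\zeta_u\|^2_{L^2}\leq C\eta\epsilon$ on the slab is a non sequitur: the energy gives $\|e(u)-ce_0\|_{L^2}^2\leq C\eta\epsilon$, but the hypothesis $\|c-\zeta_c\|_{L^2}^2\leq\eta$ carries no factor of $\epsilon$, so the triangle inequality only yields $\|e(u)-\zeta_u\|_{L^2}^2\leq C\eta$ (which is already assumed anyway). Superquadraticity of $f$ cannot repair this on the two-dimensional slab, because $\frac{1}{\epsilon}\int f(c)\,dz\leq\eta$ only controls $\int\min\{|c-\mu_0|^2,|c-\mu_1|^2\}\,dz\leq C\eta\epsilon$; on a set where $c$ sits near the \emph{other} well --- which may have measure of order $\eta$ consistently with every hypothesis --- the quantity $\|e(u)-\zeta_u\|^2$ can be of order one while contributing nothing to the elastic or potential energy, so its integral is genuinely of order $\eta$ and not $\eta\epsilon$. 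Consequently the single Korn-type rigidity step you propose delivers only $\|u-w_y\|_{H^1}^2\leq C\eta$, and no trace or interpolation inequality recovers the missing factor of $\epsilon$. This is precisely the obstruction recorded in the discussion following the statement of Theorem \ref{H1/2bound}: to apply Korn with the correct scaling one would need pointwise knowledge of which well $e(u)$ is near, and that is not available.

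The paper's proof circumvents this by reducing to one-dimensional grid lines. Theorem \ref{gridenergythm} selects, for each good slice $y\in E$, a hierarchy of parallelogram grids refining toward $(-d,d)\times\{y\}$ on which the energy density integrates to $C\eta$ \emph{and} on which $|c-\zeta_c|\leq\delta$ pointwise; only there does superquadraticity give $\int |c-\zeta_c|^2\,d\mathcal{H}^1\leq C\eta\epsilon$ over the grid, and hence the $\epsilon$-scaled bound (\ref{elasticgridenergy}) for $e(u)-\zeta_u$ on the grid. One then builds a piecewise-affine interpolant $\bar u_y$ of line averages of $u$, satisfying $\bar u_y(\cdot,y)=u(\cdot,y)$, controls $\|\nabla\bar u_y-R_\phi\|_{L^2}^2\leq C\eta\epsilon$ for a piecewise-constant skew field via Lemma \ref{conti4.4} and the adjacent-box estimates of Lemma \ref{lem:adjacentBoxes}, and only then applies Korn and the trace theorem --- to $\bar u_y$, not to $u$. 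Your proposal has the correct endgame (Korn plus trace on the half-width segment) and the correct device for selecting $E$ (Lemma \ref{energylemma}), but omits the grid construction, which is the actual content of the proof.
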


To prove this, $H^{1/2}$ bound, we are immediately drawn to looking at the elastic energy which heuristically looks like $$\int_{D_d} \frac{1}{\epsilon}\min\{\|e(u)-\mu_0 e_0\|,\|e(u)-\mu_1 e_0\|\}^2 \ dz .$$ If we could simply conclude that $\|e(u)-\mu_1e_0\|\leq \|e(u)-\mu_0 e_0\|$ in $D_d,$ we could then apply Korn's Inequality to conclude $\|u-w\|_{H^1}^2\leq C\eta \epsilon,$ where $e(w) = \mu_1 e_0.$ From which we could apply standard trace bounds to conclude the theorem. But to conclude the pointwise estimate $\|e(u)-\mu_1 e_0\|\leq \|e(u) - \mu_0 e_0\|$ appears infeasible. Thus we proceed via the methods of Conti and Schweizer (Section 4 of \cite{ContiSchweizer-Linear}), wherein we find a large set $E\subset (-l,l)$ for which we may define some function $\bar u_y$ associated to each $y\in E$ which satisfies $\bar u_y(\cdot,y) = u(\cdot,y)$ and has energy estimates representative of $\|e(\bar u_y)-\mu_1 e_0\|\leq \|e(\bar u_y) - \mu_0 e_0 \|,$ consequently reducing the problem to an application of Korn's inequality. Finding the function $\bar u_y$ involves nontrivial constructions, and will be constructed via linear interpolations of averages of $u$ on a grid which refines towards the line $(-d,d)\times \{y\}.$

\subsection*{Grid Energy estimates}

We define 
\begin{equation}\label{G1grid}
G^1 := \{(x,y):(x,y)\in \partial (0,1)^2 \text{ or } x=y \text{ or }x = 1-y\}.
\end{equation}

For some fixed $n\in \N$, we then set 
\begin{equation}\label{Gngrid}
G^n := \bigcup_{i,j=0}^{n-1} \Big( (i/n,j/n)+\frac{1}{n}G^1\Big).
\end{equation}

\begin{multicols}{2}
\begin{figure}[H]
\begin{center}
\begin{tikzpicture}
\draw (0,0) -- (0,4) -- (4,4) -- (4,0) -- (0,0);
\draw (0,0) -- (4,4);
\draw (4,0) -- (0,4);
\end{tikzpicture}
\end{center}
 \caption{\textbf{\small{$G^1$, see (\ref{G1grid}). }}}
 \end{figure}

\begin{figure}[H]
\begin{center}
\begin{tikzpicture}
\draw (0,0) -- (0,2) -- (2,2) -- (2,0) -- (0,0);
\draw (0,0) -- (2,2);
\draw (2,0) -- (0,2);
\draw (2,0) -- (2,4) -- (4,4) -- (4,2) -- (2,2);
\draw (2,2) -- (4,4);
\draw (4,2) -- (2,4);
\draw (0,2) -- (0,4) -- (2,4) -- (2,2) -- (0,2);
\draw (0,2) -- (2,4);
\draw (2,2) -- (0,4);
\draw (2,0) -- (2,2) -- (4,2) -- (4,0) -- (2,0);
\draw (2,0) -- (4,2);
\draw (4,0) -- (2,2);
\end{tikzpicture}
\end{center}
 \caption{\textbf{\small{$G^2$, see (\ref{Gngrid}). }}}
 \end{figure}
\end{multicols}

For some fixed $k\in \N$, we define $d_k := 2^{-k}$ and suppose $z=(x,y),z'=(x',y')\in \R^2$ (with $y<y'$) are the left vertices of a parallelogram $P$ with a base of length $d_k$ parallel to the $x$-axis; consider the affine map $L_k(z,z'):\R^2\to \R^2$ which maps $(0,1)^2$ onto $P$ with $L_k(z,z')(0,0) = z$ and $L_k(z,z')(0,1) = z'.$

We define 
\begin{equation}\label{Gnkgrid}
G^n_k(z,z') := L_k(z,z')\Big[\bigcup_{i=0}^{\Delta 2^{k}-1}((i,0)+G^n)\Big],
\end{equation} where $\Delta >0$ is such that $\Delta 2^k$ is an integer. 
\begin{figure}[H]
\begin{center}
\begin{tikzpicture}
\draw (0,0) -- (1,4) -- (5,4) -- (4,0) -- (0,0);
\draw (0,0) -- (5,4);
\draw (4,0) -- (1,4);
\draw (4,0) -- (5,4) -- (9,4) -- (8,0) -- (4,0);
\draw (4,0) -- (9,4);
\draw (8,0) -- (5,4);
\draw (8,0) -- (9,4) -- (13,4) -- (12,0) -- (8,0);
\draw (8,0) -- (13,4);
\draw (12,0) -- (9,4);
\draw (12,0) -- (13,4) -- (17,4) -- (16,0) -- (12,0);
\draw (12,0) -- (17,4);
\draw (16,0) -- (13,4);
\end{tikzpicture}
\end{center}
 \caption{\textbf{\small{$G^1_2(z,z')$ for $\Delta =1,$ $z = (0,0),$ $z' = (1/4,1)$, see (\ref{Gnkgrid}). }}}
 \end{figure}
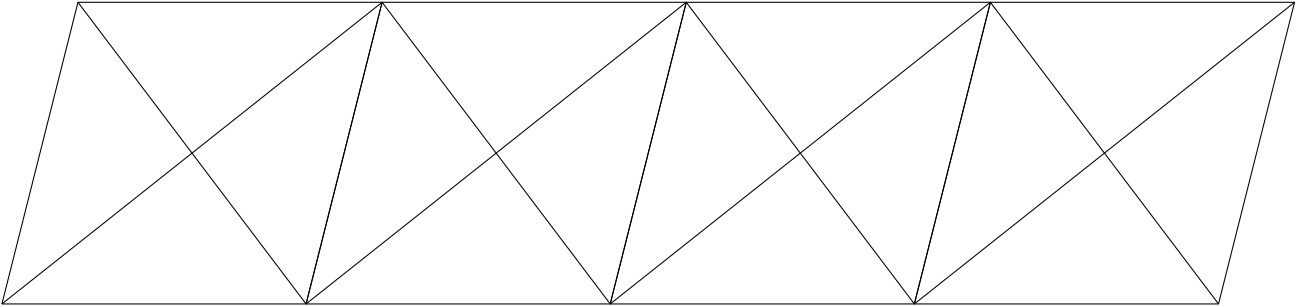

Let \begin{equation}\label{def:g_eps}
g_\epsilon(x,y) := \frac{1}{\epsilon}f(c(x,y)) + \epsilon \|\nabla c(x,y)\|^2 +\frac{1}{\epsilon}\|e(u(x,y))-c(x,y)e_0\|^2.
\end{equation} 
Up to modification of a few constants, the proof of the following theorem follows closely the one of Lemma 4.3 in \cite{ContiSchweizer-Linear}, and hence we refer the reader to this for a proof.
\begin{thm}\label{gridenergythm}
Assume (\ref{coercivity}), (\ref{detcond2}), and (\ref{fcoeffCond}) hold. Given $\theta\in (0,1)$, $\delta\in (0,1/4),$ $d>0$, $l_1>l_0$, and $(\zeta_u,\zeta_c)\in\{(\mu_0e_0,\mu_0),(\mu_1e_0,\mu_1)\}$, there are constants $\eta_0,\epsilon_0,C,k_0,\Delta, C_{d,l}>0$ such that for all $\epsilon\in (0,\epsilon_0)$, $u\in C^2((-d,d)\times (l_0,l_1),\R^2)$, $c\in C^1((-d,d)\times (l_0,l_1),[0,1])$ satisfying $$ I_\epsilon[u,c,(-d,d)\times (l_0,l_1)]\leq \eta \leq \eta_0$$
and 
$$\|e(u) - \zeta_u\|^2_{L^2((-d,d)\times (l_0,l_1))}+\|c-\zeta_c\|^2_{L^2((-d,d)\times (l_0,l_1))}\leq \eta, $$
we may find a set $E\subset(l_0,l_1)$ with $\mathcal{L}^1(E)>\frac{l_1-l_0}{2}$ for which we have the following:
For each $y_0\in E$, $k>k_0$
\begin{enumerate}[label=\roman*), ref=\roman*]
\item There is $z_k = (x_k,y_k)$ with $y_k\in [y_0-d_{k-1},y_0-d_{k-1}+\delta d_{k-1}]$ and $|x_k-x_{k+1}|\leq \delta  d_k,$ and $-x_k\in (-d,-d+3\delta).$
\item $I_\epsilon[u,c,(-d,d)\times (y_k,y_0)]\leq C\eta |y_0-y_k|.$
\item\label{deltabdd} For all points $z$ in the grid $G^n_k(z_k,z_{k+1})$ defined in (\ref{Gnkgrid}), $|c(z)-\zeta_c|\leq \delta$.
\item\label{gridenergy} We have the energetic bound $$\int_{G^n_k(z_k,z_{k+1})} g_{\epsilon} \ d\mathcal{H}^1\leq C\eta, $$ where $g_\epsilon$ is defined in (\ref{def:g_eps}).
\item $\Delta 2^{k_0}\in \N$ and $(-d/2,d/2)\times (y_0-C_{d,l},y_0)$ is contained in $\bigcup_{k>k_0}\operatorname{conv}(G^n_k(z_k,z_{k+1})).$
\end{enumerate}
\end{thm}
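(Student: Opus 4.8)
The plan is to reproduce, with adjusted constants, the scheme of Section~4 of \cite{ContiSchweizer-Linear}. The hypothesis $I_\epsilon[u,c,(-d,d)\times(l_0,l_1)]\le\eta$ says exactly $\int_{l_0}^{l_1}\rho(t)\,dt\le\eta$ for the slice-density $\rho(t):=\int_{-d}^{d}g_\epsilon(x,t)\,dx$, with $g_\epsilon$ as in (\ref{def:g_eps}). Combining this with Fubini's theorem and the elementary selection Lemma~\ref{energylemma}, one can, at each dyadic scale $d_k=2^{-k}$, locate a good horizontal level $y=y_k$ inside the prescribed window $[y_0-d_{k-1},\,y_0-d_{k-1}+\delta d_{k-1}]$ and a good left position $x_k$ with $-x_k\in(-d,-d+3\delta)$ on which the one-dimensional energy of $g_\epsilon$ is small; the particular shape of the cell pattern $G^1$ of (\ref{G1grid}) (the four sides of the unit square plus both diagonals, refined into $n^2$ subcells in (\ref{Gngrid})) is dictated not by this lemma but by the later construction in Theorems~\ref{H1/2bound} and \ref{boxenergy} of an interpolating competitor whose symmetrized gradient is committed to a single well. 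Here we only need to manufacture such grids carrying $\mathcal H^1$-energy $\lesssim\eta$.

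First I would fix $E$. The one-sided Hardy--Littlewood maximal function of $\rho$ obeys the weak-type estimate $\mathcal L^1\big(\{y_0:\sup_{0<h<l_1-l_0}\tfrac1h\int_{y_0-h}^{y_0}\rho>\lambda\}\big)\le\tfrac1\lambda\int_{l_0}^{l_1}\rho\le\tfrac\eta\lambda$, so choosing $\lambda$ of order $\eta/(l_1-l_0)$ produces a set $E$ with $\mathcal L^1(E)>\tfrac{l_1-l_0}{2}$ on which $I_\epsilon[u,c,(-d,d)\times(y_0-h,y_0)]=\int_{(-d,d)\times(y_0-h,y_0)}g_\epsilon\,dz\le C\eta h$ for every $h$; this is item (ii), with $C$ depending on $d,l_0,l_1$. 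For $y_0\in E$ I then build the chain $\{z_k\}_{k>k_0}$ recursively in $k$: at scale $k$ the item-(ii) bound gives energy $\le C\eta d_k$ in the thin strip $(y_0-d_{k-1},y_0)$, and applying Fubini — sliding $y_k$ vertically to treat the $O(n)$ horizontal segments of $G^n_k(z_k,z_{k+1})$, and sliding $x_k$ horizontally to treat the (nearly vertical) side segments and the (slope $\sim1$) diagonal segments, each of which is a graph over a short $x$- or $y$-interval — together with Lemma~\ref{energylemma} yields a subset of admissible $(x_k,y_k)$ of density close to one inside a rectangle of dimensions $\sim\delta d_k\times\delta d_{k-1}$ for which every segment of $G^n_k$ carries energy $\lesssim\eta$; summing gives item (iv). The compatibility bound $|x_k-x_{k+1}|\le\delta d_k$ is available because the admissible-position sets at consecutive scales both have density near one in overlapping $x$-intervals of length $\gtrsim\delta d_k$. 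For item (iii), on a grid segment where $\int(\tfrac1\epsilon f(c)+\epsilon\|\nabla c\|^2)\,d\mathcal H^1\lesssim\eta$ the Modica--Mortola inequality $\sqrt{f(c)}\,|\nabla c|\le\tfrac12(\tfrac1\epsilon f(c)+\epsilon\|\nabla c\|^2)$ bounds the total variation along the segment of $\Psi(c)$, where $\Psi(s):=\int_{\zeta_c}^{s}\sqrt{f(\tau)}\,d\tau$; since $\Psi$ is strictly monotone away from $\zeta_c$ and $\|c-\zeta_c\|_{L^2}^2\le\eta$ forces $c$ within $\delta/2$ of $\zeta_c$ at some point of the connected grid, taking $\eta_0$ small keeps $|c-\zeta_c|\le\delta$ throughout.

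Item (v) is geometric bookkeeping: choosing $\Delta$ large (depending on $d$) makes each parallelogram $\operatorname{conv}(G^n_k(z_k,z_{k+1}))$ — of horizontal extent $[x_k,x_k+\Delta]$ with $x_k$ within $3\delta$ of $-d$, and vertical extent $[y_k,y_{k+1}]$ — span $(-d/2,d/2)$ in $x$; since $y_k\uparrow y_0$ with $y_{k+1}-y_k\sim d_k$ and the windows are chosen so that consecutive parallelograms overlap in $y$, the union $\bigcup_{k>k_0}\operatorname{conv}(G^n_k(z_k,z_{k+1}))$ contains a strip $(-d/2,d/2)\times(y_0-C_{d,l},y_0)$ for $k_0$ large, and nudging $k_0$ secures $\Delta 2^{k_0}\in\N$. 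The step I expect to be the main obstacle is the one where everything is made uniform in $\epsilon$: the naive Poincar\'e/averaging route fails (Remark~\ref{rmk:poincareChallenge}), so every one-dimensional estimate must be obtained by averaging over translations of a grid piece across a strip whose thickness is comparable to $d_k$ rather than to $d$, which is exactly what forces the window widths $\delta d_{k-1}$ and the dyadic organization — and, simultaneously, this has to be carried out at every scale $k>k_0$ and for a set $E$ of levels of measure exceeding $\tfrac{l_1-l_0}{2}$, so the exceptional set at each scale must be controlled against the single quantity $\eta$ and the selection thresholds tuned accordingly. Since Section~4 of \cite{ContiSchweizer-Linear} executes precisely this programme for $F_\epsilon$, and the only structural change here is that $\|e(u)-ce_0\|^2$ replaces $W(\nabla u)$ and $\tfrac1\epsilon f(c)+\epsilon\|\nabla c\|^2$ replaces the second-order term — affecting only constants in the slicing estimates — the argument transfers with the modifications indicated.
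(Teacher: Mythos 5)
Your proposal is correct and follows essentially the same route as the paper, which itself does not write out a proof but states that the argument is that of Lemma 4.3 of \cite{ContiSchweizer-Linear} up to modification of constants; your reconstruction (weak-type maximal estimate to fix $E$ and obtain (ii), Fubini plus Lemma \ref{energylemma} to select the chain $z_k$ and the low-energy grids, the Modica--Mortola/geodesic argument for (iii), and the dyadic bookkeeping for (v)) matches that scheme and correctly identifies that only the slicing constants change when $\|e(u)-ce_0\|^2$ and the Cahn--Hilliard terms replace the integrand of $F_\epsilon$.
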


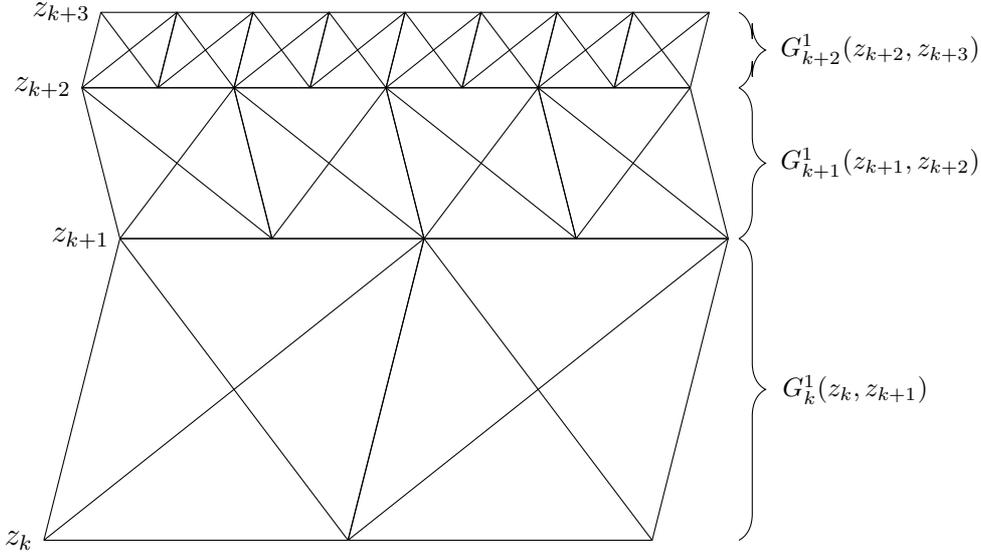
\begin{figure}[H]
\begin{center}
\begin{tikzpicture}
\draw (0,0) node[anchor= east] {$z_{k}$} -- (1,4) -- (5,4) -- (4,0) -- (0,0);
\draw (0,0) -- (5,4);
\draw (4,0) -- (1,4);
\draw (4,0) -- (5,4) -- (9,4) -- (8,0) -- (4,0);
\draw (4,0) -- (9,4);
\draw (8,0) -- (5,4);
\draw [decorate,decoration={brace,amplitude=10pt,mirror},xshift=4pt,yshift=0pt]
(9,0) -- (9,4) node [black,midway,xshift=1.53cm] 
{\footnotesize $G^1_k(z_k,z_{k+1})$};
\draw (1,4) node[anchor= east] {$z_{k+1}$} -- (3,4) -- (2.5,6) -- (0.5,6) -- (1,4);
\draw (1,4) -- (2.5,6);
\draw (3,4) -- (0.5,6);
\draw (3,4) -- (5,4) -- (4.5,6) -- (2.5,6) -- (3,4);
\draw (3,4) -- (4.5,6);
\draw (5,4) -- (2.5,6);
\draw (5,4) -- (7,4) -- (6.5,6) -- (4.5,6) -- (5,4);
\draw (5,4) -- (6.5,6);
\draw (7,4) -- (4.5,6);
\draw (7,4) -- (9,4) -- (8.5,6) -- (6.5,6) -- (7,4);
\draw (7,4) -- (8.5,6);
\draw (9,4) -- (6.5,6);
\draw [decorate,decoration={brace,amplitude=10pt,mirror},xshift=4pt,yshift=0pt]
(9,4) -- (9,6) node [black,midway,xshift=1.86cm] 
{\footnotesize $G^1_{k+1}(z_{k+1},z_{k+2})$};
\draw (0.5,6) node[anchor= east] {$z_{k+2}$} -- (1.5,6) -- (1.75,7) -- (0.75,7) node[anchor= east] {$z_{k+3}$} -- (0.5,6);
\draw (0.5,6) -- (1.75,7);
\draw (1.5,6) -- (0.75,7);
\draw (1.5,6) -- (2.5,6) -- (2.75,7) -- (1.75,7) -- (1.5,6);
\draw (1.5,6) -- (2.75,7);
\draw (2.5,6) -- (1.75,7);
\draw (2.5,6) -- (3.5,6) -- (3.75,7) -- (2.75,7) -- (2.5,6);
\draw (2.5,6) -- (3.75,7);
\draw (3.5,6) -- (2.75,7);
\draw (3.5,6) -- (4.5,6) -- (4.75,7) -- (3.75,7) -- (3.5,6);
\draw (3.5,6) -- (4.75,7);
\draw (4.5,6) -- (3.75,7);
\draw (4.5,6) -- (5.5,6) -- (5.75,7) -- (4.75,7) -- (4.5,6);
\draw (4.5,6) -- (5.75,7);
\draw (5.5,6) -- (4.75,7);
\draw (5.5,6) -- (6.5,6) -- (6.75,7) -- (5.75,7) -- (5.5,6);
\draw (5.5,6) -- (6.75,7);
\draw (6.5,6) -- (5.75,7);
\draw (6.5,6) -- (7.5,6) -- (7.75,7) -- (6.75,7) -- (6.5,6);
\draw (6.5,6) -- (7.75,7);
\draw (7.5,6) -- (6.75,7);
\draw (7.5,6) -- (8.5,6) -- (8.75,7) -- (7.75,7) -- (7.5,6);
\draw (7.5,6) -- (8.75,7);
\draw (8.5,6) -- (7.75,7);
\draw [decorate,decoration={brace,amplitude=10pt,mirror},xshift=4pt,yshift=0pt]
(9,6) -- (9,7) node [black,midway,xshift=1.86cm] 
{\footnotesize $G^1_{k+2}(z_{k+2},z_{k+3})$};
\end{tikzpicture}
\end{center}
 \caption{\textbf{\small{This figure illustrates the collection of grids constructed in Theorem \ref{gridenergythm} in the case that $n=1$. }}}
 \end{figure}

Without loss of generality, suppose $(\zeta_u,\zeta_c) = (\mu_0e_0,\mu_0)$. Utilizing properties \ref{deltabdd} and \ref{gridenergy} in Theorem \ref{gridenergythm} and that $f$ is super-quadratic at the wells (see Proposition \ref{prop:ffunc}), we find that $$\int_{G^n_k(z_k,z_{k+1})} |c-\mu_0|^2 \ d\mathcal{H}^1\leq C\eta\epsilon,$$ which by Minkowski's inequality (see \cite{fonseca2007modern}) and property \ref{gridenergy} in Theorem \ref{gridenergythm} allows us to further conclude 
\begin{equation} \label{elasticgridenergy}
\int_{G^n_k(z_k,z_{k+1})} \|e(u)-\mu_0e_0\|^2 \ d\mathcal{H}^1\leq C\eta\epsilon.
\end{equation}

We include a lemma of Conti and Schweizer \cite{ContiSchweizer-Linear} relating energy bounds on one element of the grid to an affine approximation of the function $u$. Let
\begin{equation}\label{Lmatrix}
L := \begin{bmatrix}
1/l & s \\
0 & l 
\end{bmatrix}
\end{equation} be the matrix mapping the unit square onto the parallelogram with vertices $(0,0), \ (1/l,0), \ (s,l),$ and $(s+1/l,l)$. For all $s,l$ with $|s|+|l-1|$ sufficiently small, the parallelogram is ``close" to the square.

Letting $a\in \R^2$, $s^- := 0$, $s^+ := s,$ $l^- := 0$, and $l^+ := l,$ we define (see Figure \ref{fig:BoxGrid}) the segments $\gamma_i^{\pm}$ on the grid given by $a+ L(dG^{n})$ as  
\begin{equation} \label{def:segmentGamma}
\gamma_{i}^{\pm} : =a+ \Big( (s^{\pm}d+\frac{i}{n}(d/l),s^{\pm}d+\frac{i+1}{n}(d/l))\times \{ dl^{\pm}\} \Big),
\end{equation}
with left endpoints $z_i^{\pm}$ given by 
\begin{equation} \label{def:znode}
z_{i}^{\pm} : = a+ (s^{\pm}d+\frac{i}{n}(d/l),  dl^{\pm}).
\end{equation}

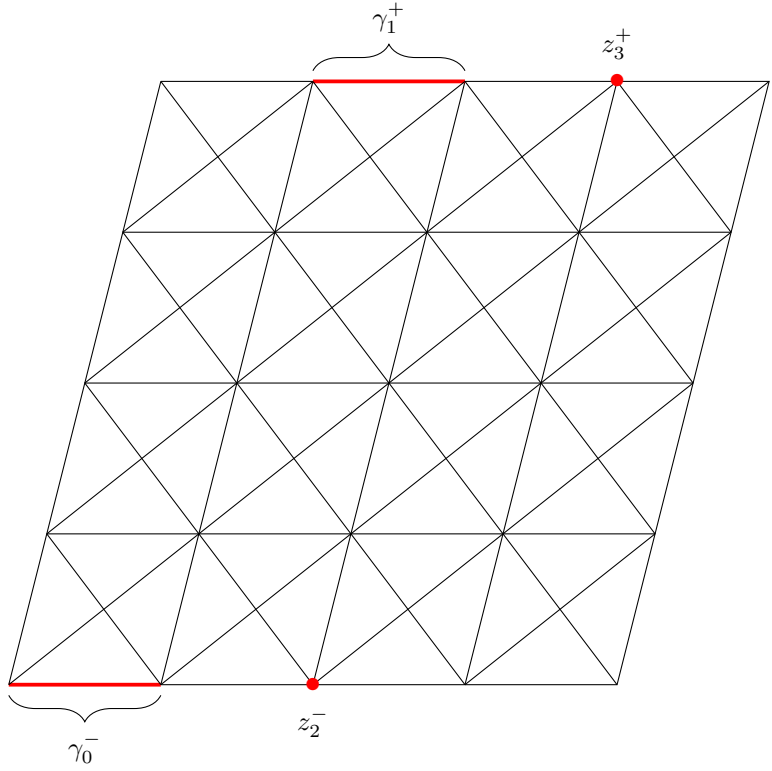
\begin{figure}[t]
\begin{center}
\begin{tikzpicture}
\draw [line width=0.5mm, red ]  (0,0) -- (2,0);
\draw  (2,0) -- (2.5,2) -- (0.5,2) --(0,0);
\draw (0,0) -- (2.5,2);
\draw (2,0) -- (0.5,2);
\draw [decorate,decoration={brace,amplitude=10pt,mirror},xshift=0pt,yshift=-4pt]
(0,0) -- (2,0) node [black,midway,xshift=0.0cm,yshift=-0.7cm] 
{\footnotesize $\gamma_0^-$};
\draw (2,0) -- (4,0) -- (4.5,2) -- (2.5,2);
\draw (2,0) -- (4.5,2);
\draw (4,0) -- (2.5,2);
\draw (4,0) -- (6,0) -- (6.5,2) -- (4.5,2);
\draw (4,0) -- (6.5,2);
\draw (6,0) -- (4.5,2);
\draw (6,0) -- (8,0) -- (8.5,2) -- (6.5,2);
\draw (6,0) -- (8.5,2);
\draw (8,0) -- (6.5,2);
\node [red] at (4,0) {\textbullet};
\node at (4,0) [black,xshift=0.0cm,yshift=-0.5cm] {\footnotesize $z_2^-$};
\draw (2.5,2) -- (3,4) -- (1,4) --(0.5,2);
\draw (0.5,2) -- (3,4);
\draw (2.5,2) -- (1,4);
\draw (4.5,2) -- (5,4) -- (3,4);
\draw (2.5,2) -- (5,4);
\draw (4.5,2) -- (3,4);
\draw (6.5,2) -- (7,4) -- (5,4);
\draw (4.5,2) -- (7,4);
\draw (6.5,2) -- (5,4);
\draw (8.5,2) -- (9,4) -- (7,4);
\draw (6.5,2) -- (9,4);
\draw (8.5,2) -- (7,4);
\draw (3,4) -- (3.5,6) -- (1.5,6) --(1,4);
\draw (1,4) -- (3.5,6);
\draw (3,4) -- (1.5,6);
\draw (5,4) -- (5.5,6) -- (3.5,6);
\draw (3,4) -- (5.5,6);
\draw (5,4) -- (3.5,6);
\draw (7,4) -- (7.5,6) -- (5.5,6);
\draw (5,4) -- (7.5,6);
\draw (7,4) -- (5.5,6);
\draw (9,4) -- (9.5,6) -- (7.5,6);
\draw (7,4) -- (9.5,6);
\draw (9,4) -- (7.5,6);
\draw [decorate,decoration={brace,amplitude=10pt},xshift=0pt,yshift=4pt]
(4,8) -- (6,8) node [black,midway,xshift=0.0cm,yshift=0.7cm] 
{\footnotesize $\gamma_1^+$};

\draw (3.5,6) -- (4,8);
\draw (4,8) -- (2,8);
\draw (2,8) --(1.5,6);
\draw (1.5,6) -- (4,8);
\draw (3.5,6) -- (2,8);
\draw (5.5,6) -- (6,8);
\draw [line width=0.5mm, red ] (6,8) -- (4,8);
\draw (3.5,6) -- (6,8);
\draw (5.5,6) -- (4,8);
\draw (7.5,6) -- (8,8) -- (6,8);
\draw (5.5,6) -- (8,8);
\draw (7.5,6) -- (6,8);
\draw (9.5,6) -- (10,8) -- (8,8);
\draw (7.5,6) -- (10,8);
\draw (9.5,6) -- (8,8);
\node [red] at (8,8) {\textbullet};
\node at (8,8) [black,xshift=0.0cm,yshift=0.5cm] {\footnotesize $z_3^+$};
\end{tikzpicture}
\end{center}
 \caption{\textbf{\small{Grid $L(dG^4)$ with segments $\gamma_{i}^{\pm}$, see (\ref{def:segmentGamma}), and points $z_i^\pm$, see (\ref{def:znode}). }}}
  \label{fig:BoxGrid}
 \end{figure}

Across all parallelograms sufficiently close to the square, we have the following affine approximation result: 

\begin{lem}\label{conti4.4}(Lemma 4.4, Remark 4.5 in \cite{ContiSchweizer-Linear}) Suppose $a\in \R^2$, $d>0,$ and $\zeta_u\in\{\mu_0e_0,\mu_1e_0\}$. There exist constants $\delta, t_0,C>0$ such that for all $s,l,$ with 
\begin{equation}\label{slCondition}
|s|+|l-1|<\delta,
\end{equation} and $u\in H^1(a+L(0,d)^2,\R^2),$ with  
$$ \frac{1}{d^2}\int_{a+L(0,d)^2} \min\{\|e(u)-\mu_0e_0\|^2,\|e(u)- \mu_1e_0\|^2\} \ dz \leq \sigma$$
and
$$ \frac{1}{d}\int_{a+L(dG^n)} \|e(u)-\zeta_u\|^2\ d\mathcal{H}^1\leq \sigma,$$
we may find $\phi\in \R$ and $w_0\in \R^2$ such that for $i=0,\ldots, n-1,$ 
$$u_i^{\pm}:=\intbar_{\gamma_i^{\pm}}u \ d\mathcal{H}^1$$
and
$$ w_i^\pm : = w_0+\zeta_u (z^{\pm}_i)+R_\phi (z^{\pm}_i),$$
we have $$\|u_i^\pm -w_i^\pm\|^2\leq C\sigma d^2.$$
We recall that $R_\phi$, $G^n$, and $L$ are defined in (\ref{def:skewsymRot}), (\ref{Gngrid}), and (\ref{Lmatrix}) respectively. Furthermore, $\gamma_{i}^{\pm}$ and $z_i^{\pm}$ are depicted in Figure \ref{fig:BoxGrid}.
\end{lem}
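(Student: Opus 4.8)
The plan is to transcribe the argument of Conti and Schweizer for their Lemma 4.4 (and Remark 4.5) to the geometrically linear setting, making three substitutions: the matrices $A,B$ become $\mu_0 e_0,\mu_1 e_0$; the double-well potential $W(\nabla u)$ becomes $\min\{\|\cdot-\mu_0 e_0\|^2,\|\cdot-\mu_1 e_0\|^2\}$ evaluated on $e(u)$; and the Friesecke--James--M\"uller geometric rigidity estimate is replaced by Korn's inequality (see \cite{Nitsche1981-Korn}). First I would compose with the affine change of variables $z\mapsto a+L(dz)$ and observe that both hypotheses and the claimed conclusion are invariant under this map, up to constants, with the factor $d^2$ in the conclusion simply recording that $u_i^\pm$ and $w_i^\pm$ scale like a length on a cell of size $d$; this reduces matters to $a=0$, $d=1$, $L=\mathrm{Id}$, i.e.\ to the unit square $Q:=(0,1)^2$ subdivided by the skeleton $G^n$ of (\ref{Gngrid}) into $n^2$ congruent sub-cells, each a translate of $\tfrac1n G^1$.

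The core is a single-cell rigidity statement: on a sub-cell $Q'$, if the bulk quantity $\int_{Q'}\min\{\|e(v)-\mu_0 e_0\|^2,\|e(v)-\mu_1 e_0\|^2\}\,dz$ and the skeleton quantity $\int_{\partial Q'\cup\text{diag}(Q')}\|e(v)-\zeta_u\|^2\,d\mathcal H^1$ are both of order $\sigma$, then there are $w_0\in\R^2$ and $\phi\in\R$ with $\|v-(w_0+\zeta_u z+R_\phi z)\|^2\lesssim\sigma$, first on the one-skeleton $\tfrac1n G^1$ and then, via Korn, on all of $Q'$. Two ingredients drive this. First, along any segment $\sigma$ of the skeleton with unit tangent $\tau$ one has the identity $\tau^{T}\nabla v\,\tau=\tau^{T}e(v)\,\tau$, so $v\cdot\tau$ is $L^2(\sigma)$-close to an affine function of slope $\tau^{T}\zeta_u\tau$; because $G^1$ contains both diagonals, the tangent directions appearing span enough of $\R^{2\times 2}_{\mathrm{sym}}$ to pin the symmetric part of the affine ansatz to $\zeta_u$, leaving only the one-parameter skew freedom $R_\phi$, which together with $w_0$ is then determined by a least-squares fit whose residual is controlled by the skeleton energy. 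Second, the bulk hypothesis is invoked to show that the ``wrong-phase'' set $\{z\in Q':\|e(v)-\zeta_u'\|<\|e(v)-\zeta_u\|\}$, with $\zeta_u'$ the other well, has small measure --- here the geometry of the diagonals is essential, since a subset of $Q'$ of non-negligible area must meet the skeleton in a set of length comparable to its diameter --- so that $\int_{Q'}\|e(v)-\zeta_u\|^2$ is itself of order $\sigma$ and Korn's inequality upgrades the skeleton estimate to a full $H^1(Q')$ estimate.

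Finally I would propagate the cell-wise approximations $w^{(i,j)}(z)=w_0^{(i,j)}+\zeta_u z+R_{\phi_{i,j}}z$ to a single global affine map. Adjacent sub-cells share a skeleton segment of length $1/n$ on which both approximants are $L^2$-close to $v$, hence to each other; since the difference $w^{(i,j)}-w^{(i',j')}$ has the form $c+R_\psi z$, smallness in $L^2$ on a fixed-length segment forces $\abs{c}$ and $\abs{\psi}$ to be controlled with an $n$-dependent (hence harmless, $n$ being fixed) constant. Chaining along a path of at most $2n$ sub-cells then yields a common $\phi$ and $w_0$ with $\abs{\phi_{i,j}-\phi}+\abs{w_0^{(i,j)}-w_0}$ small for all $(i,j)$. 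The segments $\gamma_i^{\pm}$ and nodes $z_i^{\pm}$ of (\ref{def:segmentGamma})--(\ref{def:znode}) lie on boundary edges of the outermost sub-cells, so the cell estimate gives $\|u_i^{\pm}-(w_0+\zeta_u z_i^{\pm}+R_\phi z_i^{\pm})\|^2=\|u_i^{\pm}-w_i^{\pm}\|^2\lesssim\sigma$, which is the assertion after undoing the rescaling and restoring the $d^2$. The step I expect to be the main obstacle is the bulk/skeleton reconciliation inside the single-cell analysis: neither of the two hypotheses implies the other, and showing that the wrong-phase set is negligible --- so that Korn's inequality becomes applicable cell by cell --- is precisely where the choice of grid containing the cell diagonals is used in an essential way, mirroring the role of geometric rigidity in \cite{ContiSchweizer-Linear}; once this is in place the remaining estimates are routine linear-algebra and chaining arguments.
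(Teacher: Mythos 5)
First, a point of reference: the paper does not prove this lemma at all --- it is imported verbatim from Conti and Schweizer \cite{ContiSchweizer-Linear} (their Lemma 4.4 and Remark 4.5), so there is no in-paper proof to compare against, and your proposal has to be judged as a reconstruction of the external argument. At that level the overall architecture is right: the tangential identity $\partial_\tau(u\cdot\tau)=\tau^{T}e(u)\tau$ along the skeleton, the observation that the four tangent directions of $G^1$ (two edge directions plus two diagonals) give matrices $\tau\otimes\tau$ spanning $\R^{2\times 2}_{\rm{sym}}$, so that the only affine maps compatible with all tangential constraints are the infinitesimal rigid motions $w_0+R_\phi z$, Korn's inequality in place of geometric rigidity, and chaining across adjacent sub-cells through shared segments. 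That is indeed the shape of the cited proof.

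Two steps as written would fail. (i) The reduction to $L=\mathrm{Id}$ is not legitimate: composing with a non-rotational linear map does not conjugate the symmetrized gradient, so neither the hypotheses nor the conclusion is invariant under $z\mapsto Lz$; the condition $|s|+|l-1|<\delta$ appears in the statement precisely because the result is perturbative around the square, and the correct route is to carry the sheared grid through the argument and check that the non-degeneracy of the tangential constraint system is stable, with uniform constants, under small perturbations of the directions. (ii) The assertion that ``a subset of $Q'$ of non-negligible area must meet the skeleton in a set of length comparable to its diameter'' is false for general sets --- a disk compactly contained in one triangle of the triangulation has positive area and misses the skeleton entirely. The actual control of the wrong-phase set in \cite{ContiSchweizer-Linear} comes from two-well rigidity (the rank-one structure of admissible interfaces forces a sizeable minority phase to leave a trace on the skeleton or to pay bulk energy), not from a covering-geometry fact. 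Relatedly, you should be precise about why the bulk hypothesis is indispensable: the skeleton data only sees $\partial_\tau(u\cdot\tau)$, hence the tangential component of $u$ along each segment and the nodal values; the normal component of $u$ between nodes of $\gamma_i^{\pm}$, which enters the averages $u_i^{\pm}$, is invisible to the grid hypothesis and must be recovered from a Korn/trace estimate on the adjacent triangles --- exactly where the bulk bound and the wrong-phase argument are used. You correctly single this out as the main obstacle, but the justification offered does not yet close it.
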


To obtain the $H^{1/2}$ bound in Theorem \ref{H1/2bound}, it is essential that we estimate how $\phi$ changes between neighboring parallelograms. We collect these estimates in the following lemma.

\begin{lem} \label{lem:adjacentBoxes}
Suppose $n=4$, $a\in \R^2,$ $Q_0 = L_0[a+(0,d)^2], $ and one of the following cases

\textit{Case 1: } $Q_1 = L_1[a+ (0,d) + (0,\frac{1}{2}d) \times (0,\frac{1}{2}d)],$  

\textit{Case 2: } $Q_1 = L_0[a+ (d,0) + (0,d)\times(0,d)],$

\textit{Case 3: } $Q_1 = L_0[a+ (\frac{1}{2}d,0) +(0,d)\times(0,d)],$

\noindent where $L_0$ and $L_1$ are affine maps with linear part of the form (\ref{Lmatrix}) with parameters $l_i,s_i,$ subindexed by $0$ and $1$ respectively, satisfying condition (\ref{slCondition}) of Lemma \ref{conti4.4}. We further assume that $L_0(0,d) = L_1(0,d)$ and $L_0(d,d) = L_1(d,d).$ Then if $u\in H^1((\ov{Q_0\cup Q_1})^{\mathrm{o}},\R^2)$, we have that parameters $\phi_0$ and $w_{0,0}$ associated to the grid $P_0 = L_0(a+dG^4)$ and parameters $\phi_1$ and $w_{0,1}$ associated to the grid 

\textit{Case 1: } $P_1 = L_1(a+(0,d)+\frac{1}{2}dG^4),$

\textit{Case 2: } $P_1 = L_0(a+(d,0)+dG^4),$

\textit{Case 3: } $P_1 = L_0(a+(0,\frac{1}{2}d)+dG^4),$

\noindent by applications of Lemma \ref{conti4.4} satisfy the bounds
\begin{equation} \nonumber
\|w_{0,0}-w_{0,1}\|\leq C \sqrt{\sigma} d
\end{equation}
and
\begin{equation} \nonumber
|\phi_{0}-\phi_{1}\|\leq C \sqrt{\sigma},
\end{equation}
where 
$$\sigma:= \frac{1}{d^2}\int_{Q_0\cup Q_1} \min\{\|e(u)\|^2,\|e(u)-e_0\|^2\} \ dz  + \frac{1}{d}\int_{P_0\cup P_1} \|e(u)\|^2\ d\mathcal{H}^1.$$
\end{lem}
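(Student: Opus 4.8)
The plan is to apply Lemma~\ref{conti4.4} on $Q_0$ and on $Q_1$ separately, obtaining affine maps $A_m(z):=w_{0,m}+\zeta_u z+R_{\phi_m}z$ for $m=0,1$, and then to compare them along the common edge of the two parallelograms. Since $Q_0,Q_1\subset(\ov{Q_0\cup Q_1})^{\mathrm{o}}$ and $P_0,P_1\subset P_0\cup P_1$, both smallness hypotheses of Lemma~\ref{conti4.4} hold for each of $Q_0,Q_1$ with the $\sigma$ defined in the statement (taking $\zeta_u$ to be the single well near which $e(u)$ lies on the connected grid $P_0\cup P_1$, as the second term in the definition of $\sigma$ requires; this is the regime in which the lemma is applied, cf.\ Theorem~\ref{gridenergythm}). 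This yields $(\phi_m,w_{0,m})$ with $\|u_i^{\pm,(m)}-A_m(z_i^{\pm,(m)})\|^2\le C\sigma d^2$ for all $i$, where $u_i^{\pm,(m)}=\intbar_{\gamma_i^{\pm,(m)}}u\,d\mathcal H^1$ and the $\gamma_i^{\pm,(m)},z_i^{\pm,(m)}$ are the segments and nodes of the grid $P_m$ as in \eqref{def:segmentGamma} and \eqref{def:znode}.

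The geometric input is that in each case the conditions $L_0(0,d)=L_1(0,d)$, $L_0(d,d)=L_1(d,d)$ together with the explicit translation/scaling defining $Q_1$ force the grids $P_0$ and $P_1$ to line up exactly along the shared edge. In Case~3 (same linear part, shift by $d/2$) one gets literal identities of segments, e.g.\ $\gamma_2^{+,(0)}=\gamma_0^{+,(1)}$ and $\gamma_3^{+,(0)}=\gamma_1^{+,(1)}$ with the same left endpoints, hence $u_2^{+,(0)}=u_0^{+,(1)}$ and $u_3^{+,(0)}=u_1^{+,(1)}$ identically. In Case~1 (different linear part, half size) the bottom edge of $Q_1$ is the left half of the top edge of $Q_0$, and because $Q_1$ has half the size each segment $\gamma_i^{+,(0)}$ ($i=0,1$) is the union of two equal halves $\gamma_{2i}^{-,(1)},\gamma_{2i+1}^{-,(1)}$ of $P_1$; thus $u_i^{+,(0)}=\tfrac12(u_{2i}^{-,(1)}+u_{2i+1}^{-,(1)})$ identically (no error, since averages over equal-length pieces combine exactly), while $\tfrac12(z_{2i}^{-,(1)}+z_{2i+1}^{-,(1)})=z_i^{+,(0)}+\tfrac14 h$ with $h:=z_1^{+,(0)}-z_0^{+,(0)}$ and $\|h\|\gtrsim d$. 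Case~2 (same linear part and size, side by side) shares no $\gamma$-segment; here I would insert the auxiliary square $Q_{1/2}:=L_0[a+(\tfrac12 d,0)+(0,d)^2]$, which satisfies $Q_{1/2}\subset(\ov{Q_0\cup Q_1})^{\mathrm{o}}$ and whose grid $P_{1/2}$ consists of two cells of $P_0$ and two of $P_1$, so Lemma~\ref{conti4.4} applies to it with the same $\sigma$; then $(Q_0,Q_{1/2})$ and $(Q_{1/2},Q_1)$ are Case~3 pairs, and the Case~2 bounds follow by the triangle inequality.

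To extract the estimates, set $D:=A_0-A_1$, a rigid motion: $D(z)=(w_{0,0}-w_{0,1})+R_{\phi_0-\phi_1}z$. In Case~3 the identities above together with the Lemma~\ref{conti4.4} bounds give $\|D(z_2^{+,(0)})\|\le 2\sqrt{C\sigma}\,d$ and $\|D(z_3^{+,(0)})\|\le 2\sqrt{C\sigma}\,d$; subtracting and using $\|R_\phi v\|=|\phi|\,\|v\|$ with $\|z_3^{+,(0)}-z_2^{+,(0)}\|=\|h\|\gtrsim d$ yields $|\phi_0-\phi_1|\le C\sqrt\sigma$. In Case~1 the same input gives, for $i=0,1$, $\|D(z_i^{+,(0)})-(\zeta_u+R_{\phi_1})\tfrac14 h\|\le 2\sqrt{C\sigma}\,d$; subtracting the $i=0$ and $i=1$ relations cancels both $w_{0,0}-w_{0,1}$ and the common offset $(\zeta_u+R_{\phi_1})\tfrac14 h$, again leaving $\|R_{\phi_0-\phi_1}h\|\le 4\sqrt{C\sigma}\,d$ and hence $|\phi_0-\phi_1|\le C\sqrt\sigma$. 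Feeding this bound back into the estimate for $D$ at one matched node (which lies within distance $O(d)$ of the base point $a$) then gives $\|w_{0,0}-w_{0,1}\|\le C\sqrt\sigma\,d$.

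I expect the main obstacle to be the geometric bookkeeping rather than the analysis: verifying the exact coincidences of $\gamma$-segments across the two grids in each case, the equal-halves splitting in Case~1 (which is what allows the passage from averages over a union to averages of averages \emph{without} error), and the observation that Case~2 admits no direct comparison and must be bridged through the auxiliary square $Q_{1/2}$. Once the matching is in place, the verification of the $\sigma$-hypotheses of Lemma~\ref{conti4.4} and the final rigid-motion argument are routine.
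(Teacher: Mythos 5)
Your proposal matches the paper's proof: the paper likewise applies Lemma \ref{conti4.4} on each cell, uses the exact averaging identities $u_{0,0}^+=\tfrac12(u_{0,1}^-+u_{1,1}^-)$ and $u_{1,0}^+=\tfrac12(u_{2,1}^-+u_{3,1}^-)$ along the shared edge (this is precisely where $n=4$ is needed), subtracts the two resulting relations to isolate $R_{\phi_0-\phi_1}$ and get $|\phi_0-\phi_1|\leq C\sqrt{\sigma}$, and then feeds this back to bound $\|w_{0,0}-w_{0,1}\|$. The paper writes out only Case 1 and declares the others ``similar,'' so your explicit segment identities for Case 3 and the auxiliary half-shifted square bridging Case 2 (which indeed admits no direct comparison of horizontal $\gamma$-segments) are a correct filling-in of what the paper leaves implicit; the only cosmetic difference is that the paper shifts $w_0$ so as to work with segment midpoints, whereas you track the $\tfrac14 h$ offset explicitly.
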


\begin{figure*}[t!]
    \centering
    \begin{subfigure}[t]{0.5\textwidth}
        \centering
        \begin{tikzpicture}
\node at (2,2) {$Q_0$};
\draw (0,0) -- (0,4) -- (4,4) -- (4,0) -- (0,0);
\node at (1,5) {$Q_1$};
\draw (0,4) -- (2,4) -- (2,6) -- (0,6) -- (0,4);
\end{tikzpicture}
        \caption{\textbf{\small{Case 1. }}}
    \end{subfigure}%
    ~ 
    \begin{subfigure}[t]{0.5\textwidth}
        \centering
        \begin{tikzpicture}
\node at (2,2) {$Q_0$};
\draw (0,0) -- (0,4) -- (4,4) -- (4,0) -- (0,0);
\node at (6,2) {$Q_1$};
\draw (4,0) -- (8,0) -- (8,4) -- (4,4) -- (4,0);
\end{tikzpicture}
       \caption{\textbf{\small{Case 2. }}}
    \end{subfigure}\par\medskip
    \begin{subfigure}[t]{0.5\textwidth}
\begin{tikzpicture}
\node at (1,2) {$Q_0$};
\draw (0,0) -- (0,4) -- (4,4) -- (4,0) -- (0,0);
\node at (5,2) {$Q_1$};
\draw [dashed] (2,0) -- (6,0) -- (6,4) -- (2,4) -- (2,0);
\end{tikzpicture}
 \caption{\textbf{\small{Case 3. }}}
  \end{subfigure}
    
  \caption{\textbf{\small{Cases of Lemma \ref{lem:adjacentBoxes} when $L=I$. }}}
\end{figure*}
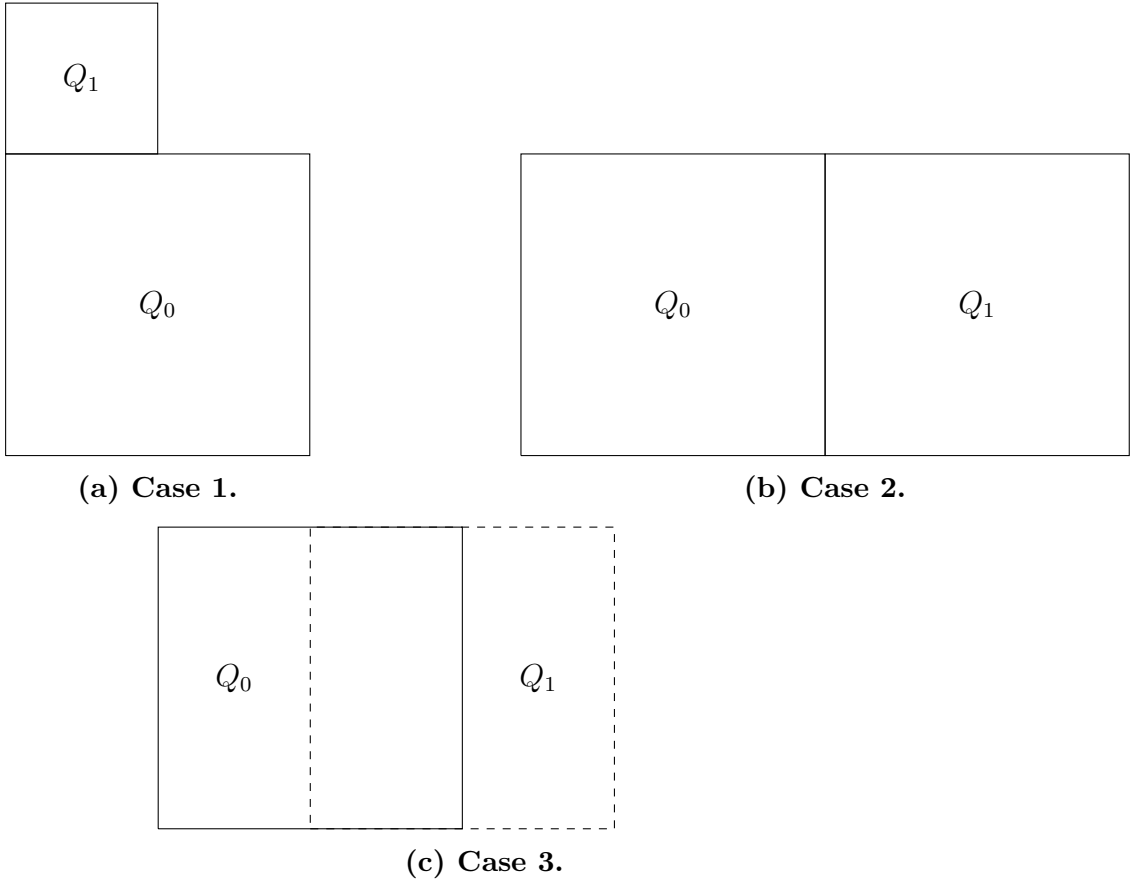

\begin{proof}

We prove \textit{Case 1}, the others being similar. For notational simplicity, we perform the following calculation when $a=0$, $L = I$ (i.e. $s_i=0$, $l_i=1$) and $\zeta_u = 0$ (which cannot be the case, but the calculation is the same as this amounts to an affine shift). We note that up to a shift in $w_0$ by $-R_\phi(\frac{1/2}{n}, 0)^T$, we may replace $\frac{i}{n}d$ by $\frac{i+1/2}{n}d$ in the definition of $z_i^\pm$ (\ref{def:znode}), which allows us to use midpoints of segments versus left end-points. This allows us to perform slightly cleaner estimates on $\phi$ and $w_0.$
 
We use an additional subscript to denote whether a quantity relates to $Q_0$ or $Q_1.$ We apply Lemma \ref{conti4.4} in $Q_0$ and $Q_1$ with grids $P_0$ and $P_1$, respectively, to find $w_{0,j}$ and $\phi_{j}$ for $j=0,1.$ It follows that 
\begin{equation}\label{bdd:uw1}
\|u_{0,0}^+ -w_{0,0}^+\|\leq C\sqrt{\sigma} d 
\end{equation} 
and 
\begin{equation} \label{bdd:uw2}
 \|u_{0,1}^-+u_{1,1}^--(w_{0,1}^-+w_{1,1}^-)\|\leq 2C\sqrt{\sigma}d.
\end{equation}
Furthermore, as $Q_0$ and $Q_1$ overlap at their top and bottom boundary respectively, we have 
\begin{equation}\label{eqn:uavgRel}
u_{0,0}^+ = \frac{1}{2}(u_{0,1}^-+u_{1,1}^-).
\end{equation} Consequently, using the definition of $w_{i,j}^\pm$, equation (\ref{eqn:uavgRel}), the triangle inequality, followed by application of the bounds (\ref{bdd:uw1}) and (\ref{bdd:uw2}), we find $$\|w_{0,0}-w_{0,1}+R_{\phi_0-\phi_1}((1/2)d/n,d)^T\| = \|w_{0,0}^+ - \frac{1}{2}(w_{0,1}^-+w_{1,1}^-) \|\leq C\sqrt{\sigma}d.$$ By a similar argument, since $u_{1,0}^+ = \frac{1}{2}(u_{2,1}^-+u_{3,1}^-)$, we find $$\|w_{0,0}-w_{0,1}+R_{\phi_0-\phi_1}((3/2)d/n,d)^T\|\leq C\sqrt{\sigma}d .$$ We note that to obtain both of these estimates is where we needed $n=4.$ Taking the difference of the terms, we find $$ (d/n)|\phi_0-\phi_1| = \|R_{\phi_0-\phi_1}(d/n,0)^T\|\leq C\sqrt{\sigma}d,$$ which implies $|\phi_0-\phi_1|\leq C\sqrt{\sigma}.$ From this, it also follows that $\|w_{0,0}-w_{0,1}\|\leq C\sqrt{\sigma}d.$

\end{proof}

With this in hand, we have enough tools to prove Theorem \ref{H1/2bound}.

\begin{proof}[Proof of Theorem \ref{H1/2bound}]
Given that the energy bounds of Lemma \ref{conti4.4} and equation (\ref{elasticgridenergy}) are independent of $c$, we do not concern ourselves with the function. We assume that $\zeta_u = \mu_0 e_0$. Shifting $u$ by the affine function $- \mu_0 e_0(x,y)^T,$ we can assume that one well is $\zeta_u = 0$ and the other well is $e_0.$

Fix the grid parameter $n = 4$. Let $\cup_k G^4_k$ be the grid as constructed in Theorem \ref{gridenergythm} with parameter $\delta>0$ for some $\bar y\in E$.
We write $$G^4_k = \bigcup_{i=1}^{i_{end}} P_{i,k}$$ where each parallelogram grid element $P_{i,k}$ is a translation of $L_k(z_k,z_{k+1})G^4$ and $P_{i_{end},k}$ is the rightmost grid element.
Choosing $\delta$  sufficiently small, each $P_{i,k}$ may be written as a translation of $(1+O(\delta))L(0,d_k)^2,$ with $|s|+|l-1| = O(\delta).$ Thus the results of Lemma \ref{conti4.4} still apply, and we find an associated pair $(w_{i,k},\phi_{i,k})$ satisfying the estimates of the lemma on the slightly rescaled grid $P_{i,k}.$

We now work to define our function $\bar u_y.$ For each $P_{i,k},$ we let $\gamma_{i,k}$ be the bottom left segment of the grid (in Lemma \ref{conti4.4} this would be on the interval $(0,d/n)\times \{0\}$). We denote the line average associated to this segment by 
\begin{equation} \label{def:linAvgSeg}
u_{i,k}:=\intbar_{\gamma_{i,k}} u \ d\mathcal{H}^1.
\end{equation} Note, for the last index $i_{end}$ for a fixed level $k$, we define $u_{i_{end}+1,k}$ to be the line average over the bottom right segment for the rightmost grid element $P_{i_{end},k}$.

For each $i,k,$ we let $z_{i,k}$ be the bottom left vertex of $P_{i,k}$ ($z_{i_{end}+1,k}$ being the bottom right of the rightmost grid element). As such, we may divide $P_{i,k}$ into two parallelograms $P_{i,k}^-$ and $P_{i,k}^+$, which each have a base of length $d_k/2 = d_{k+1},$ and have the vertex $z_{2i+1,k+1}$ in common. 

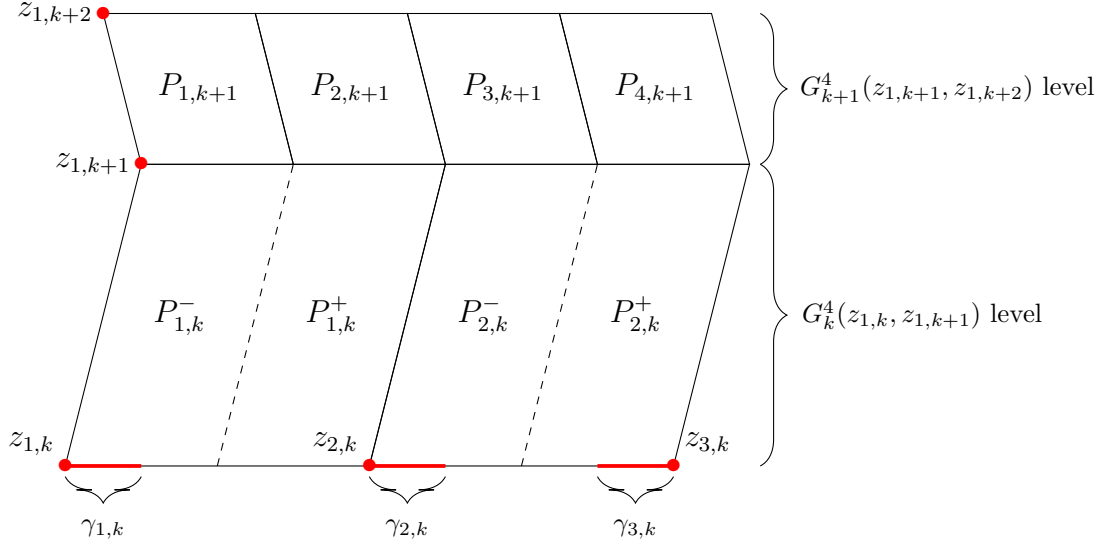
\begin{figure}[H]
\begin{center}
\begin{tikzpicture}
\draw (0,0) node[anchor= south east] {$z_{1,k}$} -- (1,4) -- (5,4) -- (4,0) -- (0,0);
\node [red] at (0,0) {\textbullet};
\node at (1.5,2) {$P_{1,k}^-$};
\node at (3.5,2) {$P_{1,k}^+$};
\draw [dashed] (2,0) -- (3,4);
\draw [decorate,decoration={brace,amplitude=10pt,mirror},xshift=0pt,yshift=-4pt]
(0,0) -- (1,0) node [black,midway,xshift=0.0cm,yshift=-0.7cm] 
{\footnotesize $\gamma_{1,k}$};
\draw [line width=0.5mm, red ] (0,0) -- (1,0);
\draw (4,0) node[anchor= south east] {$z_{2,k}$} -- (5,4) -- (9,4) -- (8,0) node[anchor= south west] {$z_{3,k}$} -- (4,0);
\node [red] at (4,0) {\textbullet};
\node [red] at (8,0) {\textbullet};
\node at (5.5,2) {$P_{2,k}^-$};
\node at (7.5,2) {$P_{2,k}^+$};
\draw [dashed] (6,0) -- (7,4);

\draw [decorate,decoration={brace,amplitude=10pt,mirror},xshift=0pt,yshift=-4pt]
(4,0) -- (5,0) node [black,midway,xshift=0.0cm,yshift=-0.7cm] 
{\footnotesize $\gamma_{2,k}$};
\draw [line width=0.5mm, red ] (4,0) -- (5,0);

\draw [decorate,decoration={brace,amplitude=10pt,mirror},xshift=0pt,yshift=-4pt]
(7,0) -- (8,0) node [black,midway,xshift=0.0cm,yshift=-0.7cm] 
{\footnotesize $\gamma_{3,k}$};
\draw [line width=0.5mm, red ] (7,0) -- (8,0);

\draw [decorate,decoration={brace,amplitude=10pt,mirror},xshift=4pt,yshift=0pt]
(9,0) -- (9,4) node [black,midway,xshift=2.13cm] 
{\footnotesize $G^4_k(z_{1,k},z_{1,k+1})$ level};
\draw (1,4) node[anchor= east] {$z_{1,k+1}$} -- (3,4) -- (2.5,6) -- (0.5,6) node[anchor= east] {$z_{1,k+2}$} -- (1,4);
\node [red] at (1,4) {\textbullet};
\node [red] at (0.5,6) {\textbullet};
\node at (1.75,5) {$P_{1,k+1}$};
\draw (3,4) -- (5,4) -- (4.5,6) -- (2.5,6) -- (3,4);
\node at (3.75,5) {$P_{2,k+1}$};
\draw (5,4) -- (7,4) -- (6.5,6) -- (4.5,6) -- (5,4);
\node at (5.75,5) {$P_{3,k+1}$};
\draw (7,4) -- (9,4) -- (8.5,6) -- (6.5,6) -- (7,4);
\node at (7.75,5) {$P_{4,k+1}$};
\draw [decorate,decoration={brace,amplitude=10pt,mirror},xshift=4pt,yshift=0pt]
(9,4) -- (9,6) node [black,midway,xshift=2.46cm] 
{\footnotesize $G^4_{k+1}(z_{1,k+1},z_{1,k+2})$ level};
\end{tikzpicture}
\end{center}
 \caption{\textbf{\small{Geometric quantities involved in the proof of Theorem \ref{H1/2bound}. }}}
 \end{figure}

We define $\bar u_y$ on $\text{conv} (P_{i,k})$ as follows: 
\begin{itemize}
\item Along the lower boundary, 
\begin{equation} \label{def:lowerBdryInterp}
\bar u_y(\theta z_{i,k}+(1-\theta)z_{i+1,k}) := \theta u_{i,k}+(1-\theta)u_{i+1,k},
\end{equation} for $\theta\in [0,1].$

\item Along the upper boundary, 
\begin{equation}\label{def:upperBdryInterp}
\bar u_y(\theta z_{2i+l,k+1}+(1-\theta)z_{2i+l+1,k+1}) := \theta u_{2i+l,k+1}+(1-\theta)u_{2i+l+1,k+1},
\end{equation} for $\theta\in [0,1], l=0,1,$ where $l$ designates whether we are considering the first (left) or second (right) half of the upper boundary.

\item Throughout the convex hull of $P_{i,k}$, 
\begin{equation}\label{def:hullInterp}
\bar u_y(\theta z+(1-\theta)(z+(z_{2i,k+1}-z_{i,k}))) := \theta \bar u_y( z)+(1-\theta)\bar u_y(z+(z_{2i,k+1}-z_{i,k})),
\end{equation} for all $z$ on the lower boundary of $P_{i,k},$ $\theta\in [0,1].$
\end{itemize}
In words, we define $\bar u_y$ on the vertices of $\text{conv} (P_{i,k})$ in terms of the associated averages of $u.$ Then we use linear interpolation to define the values on the upper and lower boundaries of $\text{conv} (P_{i,k}).$ Lastly, we interpolate between the lower and upper boundaries by moving in lines parallel to the sides of $\text{conv} (P_{i,k}).$

Given this construction of $\bar u_y,$ we now wish to show that in each parallelogram $\text{conv}(P_{i,k})$, $\nabla \bar u_y$ is close to the skew symmetric matrix $R_{\phi_{i,k}}$. We restrict our attention to grid elements which are not the rightmost, a simpler case. We introduce the parallelogram grid $P'_{i,k} = P_{i,k}^+\cup P_{i+1,k}^-$ for which Lemma \ref{conti4.4} applies (associated terms have apostrophe, i.e. $\phi_{i,k}'$). Define $$\nu_1 := (1,0) = \frac{z_{i+1,k} - z_{i,k}}{\|z_{i+1,k} - z_{i,k}\|}, \quad \nu_2:= \frac{z_{2i,k+1} - z_{i,k}}{\|z_{2i,k+1} - z_{i,k}\|} .$$
As $\nu_1$ and $\nu_2$ are linearly independent, we have

\begin{equation} \label{bdd:gradnu1nu2}
\begin{aligned}
\|\nabla \bar u_y - R_{\phi_{i,k}}\|_{L^\infty (\text{conv}(P_{i,k}))} \leq & \left\|\frac{\partial}{\partial \nu_1} \bar u_y - R_{\phi_{i,k}}(\nu_1)\right\|_{L^\infty (\text{conv}(P_{i,k}))} \\
& +\left\|\frac{\partial}{\partial \nu_2} \bar u_y- R_{\phi_{i,k}}(\nu_2)\right\|_{L^\infty (\text{conv}(P_{i,k}))}.
\end{aligned}
\end{equation}
As $\bar u_y$ is constructed via linear interpolations (\ref{def:lowerBdryInterp}), (\ref{def:upperBdryInterp}), (\ref{def:hullInterp}), we bound $\|\frac{\partial}{\partial \nu_1} \bar u_y - R_{\phi_{i,k}}(\nu_1)\|_{L^\infty (\text{conv}(P_{i,k}^-))}$ via difference quotients along the top and bottom boundary of $P_{i,k}^-$:
\begin{equation} \label{eqn:nu1Gradient}
\begin{aligned}
\left\|\frac{\partial}{\partial \nu_1} \bar u_y - R_{\phi_{i,k}}(\nu_1)\right\|_{L^\infty (\text{conv}(P_{i,k}^-))}  \leq &  C \left( \left\| \frac{(\bar u_y - R_{\phi_{i,k}})(z_{i,k}') - (\bar u_y - R_{\phi_{i,k}})(z_{i,k})}{\|z_{i,k}' - z_{i,k}\|} \right\| \right.  \\
& \left. + \left\| \frac{(\bar u_y - R_{\phi_{i,k}})(z_{2i+1,k+1}) - (\bar u_y - R_{\phi_{i,k}})(z_{2i,k+1})}{\|z_{2i+1,k+1} - z_{2i,k+1}\|} \right\| \right) \\
= & C\left( \left\| \frac{u_{i+1,k} - u_{i,k}}{d_k} -R_{\phi_{i,k}}(1,0)^T \right\| \right. \\
& + \left. \left\| \frac{u_{2i+1,k+1} - u_{2i,k+1}}{d_{k+1}} -R_{\phi_{i,k}}(1,0)^T \right\| \right).
\end{aligned}
\end{equation}
Similarly, 
\begin{equation}\label{eqn:nu2Gradient}
\begin{aligned}
\biggl\| \frac{\partial}{\partial \nu_2} \bar u_y & - R_{\phi_{i,k}}(\nu_2) \biggl\|_{L^\infty (\text{conv}(P_{i,k}^-))} & \\
\leq &  C\left(  \left\|\frac{u_{2i,k+1}-u_{i,k}}{\|z_{2i,k+1}-z_{i,k}\|}- R_{\phi_{i,k}} \left(\frac{z_{2i,k+1}-z_{i,k}}{\|z_{2i,k+1}-z_{i,k}\|}\right)\right\| \right. \\
& + \left. \left\|\frac{u_{2i+1,k+1}-\frac{1}{2}(u_{i,k}+u_{i+1,k})}{\|z_{2i+1,k+1}-\frac{1}{2}(z_{i,k}+z_{i+1,k})\|}  -R_{\phi_{i,k}} \left(\frac{z_{2i+1,k+1}-\frac{1}{2}(z_{i,k}+z_{i+1,k})}{\|z_{2i+1,k+1}-\frac{1}{2}(z_{i,k}+z_{i+1,k})\|}\right)\right\|  \right) 
\end{aligned}
\end{equation}
The bounds over $P_{i,k}^+$ are once again similar and we do not state them.

We bound the horizontal finite difference along the lower boundary of $P_{i,k},$ which will account for both terms on the right hand side of (\ref{eqn:nu1Gradient}) up to an application of Lemma \ref{lem:adjacentBoxes}. Define 
$$\sigma'_{i,k} : = \intbar_{\text{conv}(P_{i,k}\cup P_{i+1,k})}\min\{\|e(u)\|^2,\|e(u)-e_0\|^2\} \ dz +  \frac{1}{d_k}\int_{P_{i,k}\cup P_{i+1,k}} \|e(u)\|^2\ d\mathcal{H}^1 ,$$ where the integral is performed over $P_{i+1,k}$ versus $P_{i,k}'$ for convenience, not necessity.
We compute
\begin{equation}\label{bdd:FD1}
\begin{aligned}
\biggl\| & \frac{u_{i+1,k}  - u_{i,k}}{d_k} -R_{\phi_{i,k}}(1,0)^T \biggl\|^2 \\
 \leq &  \frac{1}{d_k^2}\Big(\| u_{i+1,k} - u_{i,k}' - R_{\phi_{i,k}}(d_k/2,0)^T\|^2  + \| u_{i,k}' - u_{i,k} - R_{\phi_{i,k}}(d_k/2,0)^T\|^2\Big) \\
\leq & \frac{C}{d_k^2}\Big(\| u_{i+1,k} - u_{i,k}' - R_{\phi_{i,k}'}(d_k/2,0)^T\|^2 + \|R_{\phi_{i,k}'-\phi_{i,k}}(d_k/2,0)^T\|^2  \\
& +\| u_{i,k}'-w_{i,k}- R_{\phi_{i,k}}(z_{i,k}')\|^2 +\|u_{i,k} -w_{i,k}  - R_{\phi_{i,k}}(z_{i,k})\|^2\Big) \\
\leq &  \frac{C}{d_k^2}\Big(\| u_{i+1,k}-w_{i,k}'- R_{\phi_{i,k}'}(z_{i+1,k})\|^2  +\|u_{i,k}' -w_{i,k}'  - R_{\phi_{i,k}'}(z_{i,k}')\|^2 \\
&  + d_k^2|\phi_{i,k}'-\phi_{i,k}|^2 +C\sigma_{i,k}'d_k^2\Big) \\
\leq & C (\sigma_{i,k}' + |\phi_{i,k}'-\phi_{i,k}|^2) \leq  C \sigma_{i,k}',
\end{aligned}
\end{equation}
where we have used that $$z_{i,k}'-z_{i,k}=z_{i+1,k}-z_{i,k}' =  (d_k/2,0)$$ and
\begin{equation}\label{phihoriz}
|\phi_{i,k}'-\phi_{i,k}|^2\leq C\sigma_{i,k}',
\end{equation}
by Lemma \ref{lem:adjacentBoxes},
and 
\begin{equation} \nonumber
\| u_{i,k}'-w_{i,k}- R_{\phi_{i,k}}(z_{i,k}')\| \leq C\sqrt{\sigma_{i,k}'}d_k
\end{equation}
 along with 
 \begin{equation}\nonumber
\|u_{i,k}' -w_{i,k}'  - R_{\phi_{i,k}'}(z_{i,k}')\| \leq C\sqrt{\sigma_{i,k}'}d_k,
\end{equation}
which are consequences of Lemma \ref{conti4.4} with $\zeta_u = 0$ applied to $P_{i,k}$ and $P_{i,k}'$, respectively (note that in the notation of Lemma \ref{conti4.4}, $u_{i,k}'$ is $u_{2}^-$ associated with the grid $P_{i,k}$).

 We define 
 \begin{equation} \nonumber
 \begin{aligned}
 \sigma_{i,k}: = & \intbar_{\text{conv}(P_{i,k})}\min\{\|e(u)\|^2,\|e(u)-e_0\|^2\} +  \frac{1}{d_k}\int_{P_{i,k}} \|e(u)\|^2\ d\mathcal{H}^1 \\
 &+ \intbar_{\text{conv}(P_{2i,k+1})}\min\{\|e(u)\|^2,\|e(u)- e_0\|^2\} +  \frac{1}{d_{k+1}}\int_{P_{2i,k+1}} \|e(u)\|^2\ d\mathcal{H}^1 \\
 & + \intbar_{\text{conv}(P_{2i+1,k+1})}\min\{\|e(u)\|^2,\|e(u)- e_0\|^2\} +  \frac{1}{d_{k+1}}\int_{P_{2i+1,k+1}} \|e(u)\|^2\ d\mathcal{H}^1.
 \end{aligned}
\end{equation}
We note that $\|z_{2i,k+1}-z_{i,k}\| = (1+O(\delta))d_k$ by construction. Furthermore, up to translation, we have that $z_{i,k} = 0$, and $\|z_{2i,k+1}\| = (1+O(\delta))d_k$.  Using Lemma \ref{conti4.4} and Lemma \ref{lem:adjacentBoxes}, we compute a finite difference in the direction of $\nu_2 = \frac{z_{2i,k+1}-z_{i,k}}{\|z_{2i,k+1}-z_{i,k}\|}$ on the left boundary of $\text{conv} (P_{i,k})$. This estimate will be used to bound the first term of the right hand side of (\ref{eqn:nu2Gradient}).
 \begin{equation} \label{bdd:FD2} 
\begin{aligned}
 \biggl\| & \frac{u_{2i,k+1}-u_{i,k}}{\|z_{2i,k+1}-z_{i,k}\|}- R_{\phi_{i,k}} \left(\frac{z_{2i,k+1}-z_{i,k}}{\|z_{2i,k+1}-z_{i,k}\|}\right)\biggl\|^2 \\ 
 \leq  & \frac{C}{\|z_{2i,k+1}-z_{i,k}\|^2}\Big( \| u_{2i,k+1} - (w_{2i,k+1}+R_{\phi_{2i,k+1}}(z_{2i,k+1}))\|^2 + \| u_{i,k} - (w_{i,k}+R_{\phi_{i,k}}(z_{i,k}))\|^2 \\
 &  + \|w_{2i,k+1}-w_{i,k}\|^2 +\|R_{\phi_{2i,k+1}-\phi_{i,k}}(z_{2i,k+1})\|^2 \Big)  \leq  C\sigma_{i,k}.
\end{aligned}
\end{equation}
Note that the integrals in the definition of $\sigma_{i,k}$ associated with $P_{2i+1,k+1}$ are not needed for the above inequality, but will be necessary for the next bound. 

We perform a similar calculation for near vertical finite differences along the common boundary of $\text{conv} (P_{i,k}^-)$ and $\text{conv} (P_{i,k}^+)$. This estimate bounds the second term of the right hand side of (\ref{eqn:nu2Gradient}). Using that $z_{2i+1,k+1} = \frac{1}{2}(z_{2i,k+1}+z_{2i+2,k+1})$ and adding and subtracting the term $\frac{1}{2}(u_{2i,k+1}+u_{2i+2,k+1})$, we estimate
%
\begin{equation} \label{bdd:FD3}
\begin{aligned}
\biggl\| & \frac{u_{2i+1,k+1}-\frac{1}{2}(u_{i,k}+u_{i+1,k})}{\|z_{2i+1,k+1}-\frac{1}{2}(z_{i,k}+z_{i+1,k})\|}  -R_{\phi_{i,k}} \left(\frac{z_{2i+1,k+1}-\frac{1}{2}(z_{i,k}+z_{i+1,k})}{\|z_{2i+1,k+1}-\frac{1}{2}(z_{i,k}+z_{i+1,k})\|}\right)\biggl\|^2 \\
\leq & \frac{C}{d_k^2}\biggl(\frac{1}{4} \|u_{2i,k+1}-u_{i,k}-R_{\phi_{i,k}} (z_{2i,k+1}-z_{i,k})\|^2 + \frac{1}{4}\|u_{2i+2,k+1}-u_{i+1,k}-R_{\phi_{i,k}} (z_{2i+2,k+1}-z_{i+1,k})\|^2 \\
& + \left\|\frac{1}{2}(u_{2i+2,k+1} + u_{2i,k+1}) - u_{2i+1,k+1}\right\|^2\biggl) \\
\leq & C\biggl(\sigma_{i,k}+\sigma_{i+1,k}  + \frac{1}{d_k^2}\biggl\|\frac{1}{2}(u_{2i+2,k+1} + u_{2i,k+1}) - u_{2i+1,k+1}\biggl\|^2\biggl) \\
\leq & C (\sigma_{i,k} +\sigma_{i+1,k} +\sigma'_{2i,k+1}+\sigma'_{2i+2,k+1}),
\end{aligned}
\end{equation}
where in the second inequality we have applied the analysis of finite differences along the left boundaries and the bound $|\phi_{i+1,k}-\phi_{i,k}|^2\leq C\sigma_{i,k}'$ provided by Lemma \ref{lem:adjacentBoxes}. To see the last inequality, we note 
\begin{align*}
\biggl\|& \frac{1}{2}(u_{2i+2,k+1} + u_{2i,k+1}) - u_{2i+1,k+1}\biggl\| \\
 \leq & \|u_{2i+2,k+1} - u_{2i+1,k+1} -R_{\phi_{2i+1,k+1}}(d_{k+1},0)^T\| + \|u_{2i+1,k+1} - u_{2i,k+1} -R_{\phi_{2i+1,k+1}}(d_{k+1},0)^T\|,
\end{align*}
which are the horizontal finite differences, modulo a term like (\ref{phihoriz}) for the second term, which have already been analyzed, thus concluding the bound.

We define $$R_\phi := \sum_{i,k}\chi_{\text{conv}(P_{i,k})}R_{\phi_{i,k}},$$ noting that by (\ref{def:skewsymRot}), $(R_\phi)^{\rm{sym}} = 0$ almost everywhere. Let $G := \bigcup_k \text{conv}(G_k^n).$ Applying (\ref{bdd:gradnu1nu2}), (\ref{eqn:nu1Gradient}), (\ref{eqn:nu2Gradient}), and the subsequent finite difference estimates (\ref{bdd:FD1}), (\ref{bdd:FD2}), (\ref{bdd:FD3}), we have
\begin{align*}
\|\nabla \bar u_y - & R_\phi \|_{L^2(G)}^2 \\
\leq & C \sum_{i,k}\mathcal{L}^2(\text{conv}(P_{i,k}))\biggl( \biggl\| \frac{u_{i+1,k} - u_{i,k}}{d_k} -R_{\phi_{i,k}}(1,0)^T  \biggl\|^2 \\
& + \biggl\|\frac{u_{2i,k+1}-u_{i,k}}{\|z_{2i,k+1}-z_{i,k}\|}-R_{\phi_{i,k}} \left(\frac{z_{2i,k+1}-z_{i,k}}{\|z_{2i,k+1}-z_{i,k}\|}\right)\biggl\|^2 \\
& + \biggl\|\frac{u_{2i+1,k+1}-\frac{1}{2}(u_{i,k}+u_{i+1,k})}{\|z_{2i,k+1}-z_{i,k}\|}-R_{\phi_{i,k}} \left(\frac{z_{2i+1,k+1}-\frac{1}{2}(z_{i,k}+z_{i+1,k})}{\|z_{2i+1,k+1}-\frac{1}{2}(z_{i,k}+z_{i+1,k})\|}\right)\biggl\|^2\biggl) \\
\leq & C \sum_{i,k}\mathcal{L}^2(\text{conv}(P_{i,k})) ( \sigma_{i,k} +\sigma_{i+1,k} +\sigma'_{2i,k+1}+\sigma'_{2i+2,k+1} ) \\
\leq & C \sum_{i,k} \mathcal{L}^2(\text{conv}(P_{i,k})) \biggl(\intbar_{\text{conv}(P_{i,k})}\min\{\|e(u)\|,\|e(u)-e_0\|\}^2 +  \frac{1}{d_k}\int_{P_{i,k}} \|e(u)\|^2\ d\mathcal{H}^1\biggl) \\
\leq & \int_{G}\min\{\|e(u)\|^2,\|e(u)-e_0\|^2\} \ dz+ \sum_{k} d_k \sum_{i}\int_{P_{i,k}} \|e(u)\|^2\ d\mathcal{H}^1 \\
\leq & C\eta \epsilon +  (\sum_k d_k) \int_{G_k^n} \|e(u)\|^2\ d\mathcal{H}^1\leq  C\eta \epsilon,
\end{align*}
where in the last line we have applied the energy bounds from Theorem \ref{gridenergythm}, and the bound in the second to last line follows by undoing the affine shift of $u$ and using $I_\epsilon[u,c,(-d,d)\times (l_0,l_1)]\leq \eta$ in conjunction with $f$ being a super-quadratic well. As $(-d/2,d/2)\times (y-C_{d,l},y)\subset G,$ we have 
\begin{align*}
\|e(\bar u_y) \|_{L^2((-d/2,d/2)\times (y-1/2,y))}^2 \leq &C\|\nabla \bar u_y - R_\phi \|_{L^2(G)}^2 \leq C\eta \epsilon.
\end{align*}

Applying Korn's inequality (see \cite{Nitsche1981-Korn}), subsequently the trace theorem (see \cite{LeoniBook}), and noting by continuity that $\bar u_y(\cdot, y) = u(\cdot,y),$ we conclude the proof.

\end{proof}

\begin{proof}[Proof of Theorem \ref{boxenergy}]
We construct the desired sequence by forming a transitional layer of thickness $\epsilon_i$ on the upper and lower halves of the box. We treat the upper half; the lower half is analogous. Let $\psi:\R\to [0,1]$ be a smooth cutoff function with $\psi(x) =1$ for $x<0$ and $\psi(x)=0$ for $x>1$. For some $y_i\in (l/2,3l/4)$ to be determined, let $\psi_i(x,y) := \psi((y-y_i)/\epsilon_i)$. We define 
\begin{align}\label{def:cTransFunc}
& \bar c_i := \psi_i c_i+(1-\psi_i)\bar c.
\end{align} 
We must be more cautious in defining $\bar u_i$ as previously noted.

By Proposition \ref{FonsecaTartarProp}, 
$$\liminf\limits_{i\to \infty} I_{\epsilon_i}[u_i,c_i,(-2d,2d)\times(-l/8,l/8)]\geq 4d \k(e_y), $$ and therefore by (\ref{hyp:boxenergy}), 
$$\lim\limits_{i \to \infty} I_{\epsilon_i}[u_i,c_i,(-2d,2d)\times(l/4,l)] = 0. $$ For computational simplicity, we perturb the hypotheses of the theorem to consider 
\begin{equation} \label{eqn:uSkwShift}
u_i\to \bar u =:\bar u_{e_y}(x,y) - S_{e_y}(x,y)^T \quad \text{ in } H^1(D_{2d},\R^2)
\end{equation} and $$c_i\to \bar c =: \bar c_{e_y} \quad \text{ in }L^2(D_{2d})$$ (see (\ref{unucnu}) and (\ref{def:DBox}) for relevant definitions).
Hence 
\begin{equation}\label{def:eta1}
\eta_i := \|c_i-\bar c\|^2_{L^2}+\|u_i-\bar u\|^2_{H^1}+\mathcal{L}^2(\{|c_i-\bar c|\geq 1/2-\mu_0\}) +  I_{{\epsilon_i}}[u_i,c_i,(-2d,2d)\times(l/4,l)]\to 0.
\end{equation}  By Theorem \ref{H1/2bound} for each $i$ sufficiently large, there is a set $E_i\subset (l/2,3l/4)$ such that $\mathcal{L}^1(E_i)> l/8$ and for all $y_0\in E_i$ there is an affine function 
\begin{equation}\label{def:affineFuncApprox}
w_{y_0}(x,y) :=(\mu_1e_0+ R_{\phi_{y_0}})(x,y)^T+a_{y_0}
\end{equation} (depending on $i$) such that 
\begin{equation}\label{bdd:affineFuncApprox}
\|u_i - w_{y_0}\|_{H^{1/2}((-d,d)\times\{y_0\})}^2\leq C\eta_i \epsilon_i.
\end{equation}

Modifying a proof of Gagliardo's (see Lemma \ref{traceext} below this proof), we may construct $v_{y_0} \in H^{1}((d/2,d/2)\times (y_0,l),\R^2)$ satisfying 
\begin{equation}\label{gagres}
\begin{aligned}
& v_{y_0} =  u_i - w_{y_0} \ \ \text{ on } (-d/2,d/2)\times\{y_0\}\\
& v_{y_0} =  0 \ \ \text{  on some neighborhood of }\{(x,y): y = l \} \\
& \| v_{y_0}\|_{H^1({(d/2,d/2)\times (y_0,l)})}^2 \leq  C \eta_i \epsilon_i. 
\end{aligned}
\end{equation}
Define 
\begin{equation}\label{def:uTransFunc}
\bar u_i := \psi_i u_i +(1-\psi_i)(v_{i} +w_{i}),
\end{equation}
where $v_i = v_{y_i},$ $w_i = w_{y_i}$, and $y_i\in E_i$ is to be determined.
We compute the energy for the constructed sequence (recall (\ref{def:DBox})): 
\begin{align*}
 I_{{\epsilon_i}}[\bar u_i,\bar c_i,D_{d/2}] = & I_{{\epsilon_i}}[ u_i, c_i,D^-_{d/2,\epsilon_i}] +\int_{D_{d/2,\epsilon_i}}\frac{1}{\epsilon_i}f(\bar c_i) \ dz +\int_{D_{d/2,\epsilon_i}}\epsilon_i\|\nabla \bar c_i\|^2 \ dz  \\
 &+\int_{D_{d/2,\epsilon_i}}\frac{1}{\epsilon_i}\mathbb{C}(e(\bar u_i)-\bar c_i e_0):(e(\bar u_i)-\bar c_i e_0) \ dz + \int_{D_{d/2,\epsilon_i}^+}\frac{1}{\epsilon_i}\mathbb{C}(e(v_i)):e(v_i) \ dz \\
 = : &  A_1+A_2+A_3+A_4+A_5.
\end{align*}

We will bound terms $A_2, \ A_3, \ A_4,$ and $A_5$ by $\eta_i$ for appropriate choices of $y_i$ and explicitly compute the limit of energy $A_1$.

\noindent \textbf{Term $\boldsymbol{A_2}$:} By (\ref{def:cTransFunc}),
\begin{equation} \label{eqn:A2parts}
\begin{aligned}
A_2 = & \int_{D_{d/2,\epsilon_i}}\frac{1}{\epsilon_i}f(\psi_i c_i+(1-\psi_i)\bar c) \ dz \\
= & \int_{D_{d/2,\epsilon_i}\cap \{|c_i-\bar c|< 1/2-\mu_0\}}\frac{1}{\epsilon_i}f(\psi_i c_i+(1-\psi_i)\bar c) \ dz \\
& + \int_{D_{d/2,\epsilon_i}\cap \{|c_i-\bar c|\geq 1/2 -\mu_0\}}\frac{1}{\epsilon_i}f(\psi_i c_i+(1-\psi_i)\bar c) \ dz \\
=:& A_{21} + A_{22}.
\end{aligned}
\end{equation}
To bound $A_{22}$, we integrate $y_i$ over $(l/2,3l/4)$ and apply Fubini's Theorem to find 
\begin{equation} \label{bdd:FubiniArg}
\begin{aligned}
\int_{l/2}^{3l/4}\frac{1}{\epsilon_i}\int_{D_{d/2,\epsilon_i}}\chi_{\{|c_i-\bar c|\geq 1/2-\mu_0\}} & (x,y)\ d(x,y) \ dy_i  \\
=&
\int_{l/2}^{3l/4}\frac{1}{\epsilon_i}\int_{y_i}^{y_i+\epsilon_i}\int_{-d/2}^{d/2}\chi_{\{|c_i-\bar c|\geq 1/2-\mu_0\}}(x,y)\ dx \ dy \ dy_i  \\
=& \frac{1}{\epsilon_i}\int_{0}^{\epsilon_i}\int_{l/2}^{3l/4}\int_{-d/2}^{d/2}\chi_{\{|c_i-\bar c|\geq 1/2-\mu_0\}}(x,y_i+y)\ dx \ dy \ dy_i \\
\leq & \int_{l/4}^l \int_{-d/2}^{d/2} \chi_{\{|c_i - \bar c|\geq 1/2-\mu_0\}}(x,t) \ dx \ dt\leq \eta_i.
\end{aligned}
\end{equation}
By Lemma \ref{energylemma}, for $\theta\in (0,1)$ there exists $E_{1,\theta}\subset (l/2,3l/4)$ with $\mathcal{L}^1(E_{1,\theta})>\theta l/4$ such that $$ \frac{1}{\epsilon_i}\int_{D_{d/2,\epsilon_i}} \chi_{\{|c_i-\bar c|\geq 1/2-\mu_0\}}(x,y) \ d(x,y) \leq C_\theta \eta_i$$
for all $y_i\in E_{1,\theta}.$
Hence 
\begin{equation} \label{bdd:A22}
A_{22}\leq C_\theta \|f\|_{\infty}\eta_i.
\end{equation}
 To estimate $A_{21}$, we use that $f$ is decreasing on the interval $[1/2,\mu_1]$ and increasing on $[\mu_1,1]$ (see Proposition \ref{prop:ffunc}), and that in $D_{2d,\epsilon_i}$,  we have $\bar c = \mu_1.$ Supposing $c_i \in [1/2,\mu_1]$, we find  $\psi_i c_i+(1-\psi_i)\bar c\geq c_i\geq 1/2$, and consequently $f(\psi_i c_i+(1-\psi_i)\bar c)\leq f(c_i)$, implying 
 \begin{equation} \label{bdd:A21}
 A_{21} \leq \int_{D_{d/2,\epsilon}} \frac{1}{\epsilon_i} f(c_i) \ dz.
 \end{equation}
Combining (\ref{bdd:A22}) and (\ref{bdd:A21}), we have 
\begin{equation}\label{bdd:A2}
A_2\leq C_\theta \eta_i.
\end{equation}
\noindent \textbf{Term $\boldsymbol{A_3}$:}
 By (\ref{def:cTransFunc}), we have
\begin{align*}
\int_{\tilde \Omega_{\epsilon_i}}\epsilon_i\|\nabla \bar c_i\|^2 \ dz =& \int_{\tilde \Omega_{\epsilon_i}}\epsilon_i\|\psi_i \nabla c_i+(c_i-\bar c)\nabla \psi_i\|^2 \ dz \\
\leq & C \int_{\tilde \Omega_{\epsilon_i}}\epsilon_i\|\nabla c_i\|^2 \ dz + C\|\nabla \psi\|_\infty\frac{1}{\epsilon_i}\int_{\tilde \Omega_{\epsilon_i}} |c_i-\bar c|^2 \ dz.
\end{align*}
As in (\ref{bdd:FubiniArg}), by integrating in $y_i$ over $(l/2,3l/4)$ and applying Fubini's Theorem and a change of variables,
\begin{equation} \label{bdd:FubiniArg2}
\begin{aligned}
\int_{l/2}^{3l/4}\frac{1}{\epsilon_i}\int_{D_{d/2,\epsilon_i}}|c_i(x,y) - \bar c(x,y)| & \ d(x,y) \ dy_i  \\
=&
\int_{l/2}^{3l/4}\frac{1}{\epsilon_i}\int_{y_i}^{y_i+\epsilon_i}\int_{-d/2}^{d/2}|c_i(x,y) - \bar c(x,y)|\ dx \ dy \ dy_i  \\
=& \frac{1}{\epsilon_i}\int_{0}^{\epsilon_i}\int_{l/2}^{3l/4}\int_{-d/2}^{d/2}|c_i(x,y_i+y) - \bar c(x,y_i+y)|\ dx \ dy \ dy_i \\
\leq & \int_{l/4}^l \int_{-d/2}^{d/2} |c_i(x,t) - \bar c(x,t)| \ dx \ dt\leq C\eta_i.
\end{aligned}
\end{equation}
By Lemma \ref{energylemma}, for $\theta\in (0,1)$ there exists $E_{2,\theta}\subset (l/2,3l/4)$ with $\mathcal{L}^1(E_{2,\theta})>\theta l/4$ such that $$ \frac{1}{\epsilon_i}\int_{D_{d/2,\epsilon_i}} |c_i-\bar c| \ dz \leq C_\theta \eta_i$$
for all $y_i\in E_{2,\theta}.$
Hence 
\begin{equation}\label{bdd:A3}
A_3 \leq C_\theta\eta_i.
\end{equation}

\noindent \textbf{Term $\boldsymbol{A_4}$:} We now estimate the elastic energy on the transition layer: By (\ref{def:cTransFunc}) and (\ref{def:uTransFunc}) we have
\begin{align}
A_4 \leq& \frac{C}{\epsilon_i}\int_{D_{d/2,\epsilon_i}}\|\psi_i (e(u_i) - c_ie_0) + (1-\psi_i)(e(v_i+w_{i})-\bar c e_0) + ((u_i-w_{i}-v_i)\otimes \nabla \psi_i)^{\rm{sym}}\|^2 \ dz \nonumber \\
\leq & \frac{C}{\epsilon_i}\int_{D_{d/2,\epsilon_i}} \Big(\|e(u_i) -c_ie_0\|^2 + \|\nabla v_i\|^2\Big) \ dz + \frac{C}{\epsilon_i^3}\int_{D_{d/2,\epsilon_i}}\|u_i-w_{i}-v_i\|^2 \ dz \nonumber \\
=: & A_{41} + A_{42}, \label{bdd:A4parts}
\end{align}
where we have used that in $D_{2d,\epsilon_i}$, $\bar c = \mu_1$ by definition (\ref{unucnu}) and that $e(w_{i}) = \mu_1e_0$ by (\ref{def:affineFuncApprox}).
By (\ref{coercivity}) and (\ref{gagres}), $A_{41}$ is controlled by $C\eta_i$. To bound $A_{42}$, we utilize the Poincar\'e inequality in $D_{d/2,\epsilon}$ as $u_i-w_{i}-v_i = 0$ on the lower boundary of this domain by (\ref{gagres}) (see proof of the Poincar\'e inequality in \cite{LeoniBook}). Explicitly,
\begin{equation}\label{bdd:A42}
\begin{aligned}
A_{42} \leq  & \frac{C}{\epsilon_i} \int_{D_{d/2,\epsilon_i}}\|\nabla(u_i-w_{i}-v)\|^2 \ dz \\
  \leq & \frac{C}{\epsilon_i} \int_{D_{d/2,\epsilon_i}}  \|\nabla u_i - \mu_1 e_0\|^2 + \|\phi_{y_i}\|^2 +\|\nabla v_i\|^2 \ dz,
\end{aligned} 
\end{equation}
where in the last inequality we have used (\ref{def:skewsymRot}) and (\ref{def:affineFuncApprox}).

Reasoning as in the proof of (\ref{bdd:FubiniArg}) and (\ref{bdd:FubiniArg2}), we may apply Lemma \ref{boxenergy} to find a set $E_{3,\theta}\subset (l/2,3l/4)$ with $\mathcal{L}^1(E_{3,\theta}) >\theta l/4$ such that 
\begin{equation}\label{bdd:graduEta}
\frac{C}{\epsilon_i} \int_{D_{d/2,\epsilon_i}}  \|\nabla u_i - \mu_1 e_0\|^2 \ dz \leq C_\theta\eta_i.
\end{equation}

The last term in the integrand on the right side of (\ref{bdd:A42}) is controlled by (\ref{gagres}). Thus, it remains to control $\phi_i:=\phi_{y_i}$ by $\eta_i$; to do this, we must first bound the constant $a_i:=a_{y_i}$ in (\ref{def:affineFuncApprox}). Applying Lemma \ref{energylemma} to $\|u_i-\bar u\|^{2}_{L^2(D_{d/2,\epsilon_i})}$, there is a set $E_{4,\theta}\subset (l/2,3l/4)$, with $\mathcal{L}^1(E_{4,\theta})>\theta l/4$, such that for all $y_i \in E_{4,\theta}\subset (l/2,3l/4),$
$$\int_{-d/2}^{d/2} \| u_i(x,y_i) - \mu_1 e_0(x,y_i)^T\|^2 \ d\mathcal{H}^1 \leq C_\theta\eta_i ,$$ where we have used (\ref{eqn:uSkwShift}). Consequently, supposing $y_i \in E_0\cap E_{4,\theta}$, we are able to compute
\begin{equation}\label{bdd:aconst}
\begin{aligned}
|a_i^{(2)}|^2 \leq & \left|\intbar_{-d/2}^{d/2}  u_i(x,y_i) - \mu_1 e_0(x,y_i)^T)^{(2)}  \ dx\right|^2 + \left|\intbar_{-d/2}^{d/2} u_i(x,y_i)-w_{i}(x,y_i))^{(2)} \ dx \right|^2 \\
\leq & C\eta_i + C\eta_i \epsilon_i \leq  C\eta_i.
\end{aligned}
\end{equation}
where we have used (\ref{def:skewsymRot}), (\ref{bdd:affineFuncApprox}), the fact that $\intbar_{-d/2}^{d/2} \phi_i x \ dx = 0$, and the notation $z= (z^{(1)},z^{(2)})$ for a vector $z\in \R^2$. With this in hand, we may estimate 
\begin{align*}
\frac{d^3}{12}\|\phi_i\|^2 = \int_{-d/2}^{d/2}\|\phi_i\|^2x^2 \ dx \leq & C\Big(|(a_i)^{(2)}|^2 \ dx + \int_{-d/2}^{d/2}| ( u_i(x,y_i) - \mu_1 e_0(x,y_i)^T)^{(2)} |^2 \ dx \\
& + \int_{-d/2}^{d/2}|( u_i-w_{i})^{(2)}|^2 \ dx\Big) \\
\leq & C\eta_i.
\end{align*}
By a similar argument, one can conclude $|(a_i)^{(1)}|^2\leq C\eta_i$ too. Combining (\ref{bdd:A42}), (\ref{bdd:graduEta}), (\ref{bdd:aconst}), and the previous inequalities, we conclude
$$ \frac{C}{\epsilon_i^3}\int_{D_{d/2,\epsilon_i}}\|u_i-w_{i}-v_i\|^2 \leq C\eta_i.$$ By (\ref{bdd:A4parts}) this implies
\begin{equation}\label{bdd:A4}
A_4\leq C_\theta \eta_i.
\end{equation}

\noindent \textbf{Term $\boldsymbol{A_5}$:} By construction of $v_i$ (see (\ref{gagres})), we have that 
\begin{equation}\label{bdd:A5}
A_5 = \int_{D_{d/2,\epsilon_i}^+}\frac{1}{\epsilon_i}\mathbb{C}(e(v_i)):e(v_i)\ dz \leq C \int_{D_{d/2,\epsilon_i}^+}\frac{1}{\epsilon_i}\|\nabla v_i\|^2 \ dz  \leq C\eta_i.
\end{equation}
\noindent \textbf{Term $\boldsymbol{A_1}$:} We may apply Proposition \ref{FonsecaTartarProp} and Theorem \ref{thm:liminf} to see 
\begin{align*}
\liminf_{i\to \infty}  I_{{\epsilon_i}}[ u_i, c_i,D^-_{d/2,\epsilon_i}] \geq & \liminf_{i\to \infty}  I_{{\epsilon_i}}[ u_i, c_i,(-d/2,d/2)\times(-l,l/4)] \\
\geq & d\k(e_y).
\end{align*}
The upper bound follows by contradiction. Suppose that $\limsup\limits_{i\to \infty}  I_{{\epsilon_i}}[ u_i, c_i,D^-_{d/2,\epsilon_i}]> d\k(e_y).$
It follows from Remark \ref{rmk:liminfto0} and (\ref{eqn:liminf0}) that 
\begin{align*}
4d\k(e_y) = & \lim_{i\to \infty} I_{{\epsilon_i}}[ u_i, c_i,(-2d,2d)\times (-l,3l/4)] \\
\geq & \liminf_{i\to \infty}  I_{{\epsilon_i}}[ u_i, c_i,((-2d,-d/2)\cup(d/2,2d))\times (-l,3l/4)]  \\
& + \limsup_{i\to \infty}  I_{{\epsilon_i}}[ u_i, c_i,(-d/2,d/2)\times (-l,3l/4)] \\
  > & 3d\k(e_y) + d\k(e_y) = 4d\k(e_y),
\end{align*}
where in the second inequality we used Proposition \ref{FonsecaTartarProp} and horizontal translation.
This contradiction proves 
\begin{equation}\label{lim:A1}
\lim\limits_{i\to \infty}  I_{{\epsilon_i}}[ u_i, c_i,D^-_{d/2,\epsilon_i}] = d\k(e_y).
\end{equation}

Choosing $\theta$ sufficiently close to $1$, by Lemma \ref{lem:setIntersect} below, we find that $E_{i}\cap(\cap_j E_{j,\theta})\neq \emptyset$, and thus there is $y_i$ such that all previous bounds are simultaneously satisfied. It follows that $\bar u_i \to \bar u$ in $H^1(D_{d/2},\R^2)$ (unknown till now as we needed estimates for $a_i$ and $\phi_i$) and $\bar c_i \to \bar c$ in $L^2(D_{d/2}).$ Utilizing energy bounds (\ref{bdd:A2}), (\ref{bdd:A3}), (\ref{bdd:A4}), (\ref{bdd:A5}), convergence of $\eta_i$ (\ref{def:eta1}), and convergence of $A_1$ (\ref{lim:A1}), we find that $$ \lim_{i\to \infty}  I_{{\epsilon_i}}[ u_i, c_i,D_{d/2}] = d\k(e_y),$$ concluding the theorem.
\end{proof}

\begin{lem}\label{lem:setIntersect}
Suppose $E_i,$ $i=0,\ldots, k,$ are measurable subsets of $[0,1],$ and $\lambda \in (0,1).$ Then there is $\epsilon_0 = \epsilon_0 (\lambda, k)$ such that if $\mathcal{L}^1(E_0)>\lambda$ and $\mathcal{L}^1(E_i)>1-\epsilon$ for some $0<\epsilon<\epsilon_0$ for all $i=1,\ldots, k$, then 
\begin{equation}\label{eqn:setIntersect}
\bigcap_{i=0}^k E_i \neq \emptyset.
\end{equation} 
\end{lem}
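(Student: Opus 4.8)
The plan is a direct counting estimate using complements, which is all that is needed here. First I would pass to complements relative to $[0,1]$: since each $E_i$ is a subset of $[0,1]$, the set $E_i^c := [0,1]\setminus E_i$ satisfies $\mathcal{L}^1(E_i^c) = 1 - \mathcal{L}^1(E_i) < \epsilon$ for every $i = 1,\ldots,k$. Writing the intersection as
\[
\bigcap_{i=0}^k E_i = E_0 \setminus \bigcup_{i=1}^k E_i^c ,
\]
finite subadditivity of Lebesgue measure yields
\[
\mathcal{L}^1\Big( \bigcap_{i=0}^k E_i \Big) \;\geq\; \mathcal{L}^1(E_0) - \sum_{i=1}^k \mathcal{L}^1(E_i^c) \;>\; \lambda - k\epsilon .
\]

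Next I would simply set $\epsilon_0 := \lambda/k$, a quantity depending only on $\lambda$ and $k$. Then for every $\epsilon \in (0,\epsilon_0)$ the right-hand side $\lambda - k\epsilon$ is strictly positive, so $\bigcap_{i=0}^k E_i$ has positive Lebesgue measure; in particular it is nonempty, which is exactly (\ref{eqn:setIntersect}).

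There is essentially no obstacle in this lemma — the entire content is finite subadditivity. The only things worth recording are that the threshold $\epsilon_0$ is uniform (it depends solely on $\lambda$ and on the fixed number $k$ of sets), which is what makes it usable in the proof of Theorem \ref{boxenergy} where $k$ is a fixed finite number of sets $E_{j,\theta}$ and $\lambda$ plays the role of the common lower bound $\theta l/4$, and that the hypothesis $E_i \subset [0,1]$ is precisely what legitimizes the bound $\mathcal{L}^1(E_i^c) < \epsilon$.
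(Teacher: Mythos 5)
Your proof is correct and follows essentially the same route as the paper: both arguments rest on finite subadditivity applied to the complements $E_i^c$ in $[0,1]$ and use the threshold $\epsilon_0=\lambda/k$. The only cosmetic difference is that you conclude directly that the intersection has positive measure, whereas the paper phrases the same count as a contradiction with the total measure of $[0,1]$.
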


\begin{proof}
Using subadditivity, we have
$$\mathcal{L}^1(\cap_{i>0} E_i) = 1-\mathcal{L}^1(\cup_{i>0}E_i^C) \geq 1-k\epsilon.$$ Take $\epsilon_0<\lambda/k.$ 
If (\ref{eqn:setIntersect}) does not hold, $$\mathcal{L}^1(\cap_{i\geq 0} E_i )= \mathcal{L}^1(E_0) + \mathcal{L}^1(\cap_{i>0} E_i)>\lambda + (1-\lambda)  = 1, $$
a contradiction.
\end{proof}

\begin{lem}\label{traceext} (see \cite{LeoniBook})
Given $d,l>0$ and $g\in H^{1/2}((-d,d)\times\{0\}),$ we may construct $v \in H^{1}((-d/2,d/2)\times (0,l))$ satisfying 
\begin{align*}
& v =  g \ \ \text{ on } (-d/2,d/2)\times\{0\} \nonumber\\
& v =  0 \ \ \text{  on some neighborhood of }\{(x,y): y = l \} \nonumber\\
& \|v\|_{H^1({(d/2,d/2)\times (0,l)})}^2 \leq  C \|g\|_{H^{1/2}((-d,d)\times\{0\})},
\end{align*}
for some constant $C>0$ independent of $g.$
\end{lem}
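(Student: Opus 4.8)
The plan is to recognize Lemma~\ref{traceext} as the classical trace--extension theorem of Gagliardo (see \cite{LeoniBook}) supplemented with a cut-off in the $y$--variable; since the $H^{1/2}$ and $H^1$ norms decompose over components, it is enough to treat scalar $g$, and throughout $C$ may depend on the fixed parameters $d,l$. First I would extend $g$ off the interval: using a bounded extension operator for $H^{1/2}$ on $(-d,d)$ (for instance reflection across the two endpoints followed by a smooth truncation), one obtains $Eg\in H^{1/2}(\R)$ with $Eg = g$ on $(-d,d)$ and $\|Eg\|_{H^{1/2}(\R)}\le C\|g\|_{H^{1/2}((-d,d))}$.

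Next I would form the Gagliardo extension. Fix $\varphi\in C_c^\infty((-1,1))$ with $\varphi\ge 0$ and $\int_\R\varphi\,dt = 1$, and set
\[
w(x,y) := \int_\R \varphi(t)\,(Eg)(x-yt)\,dt = \frac{1}{y}\int_\R \varphi\Big(\tfrac{x-s}{y}\Big)(Eg)(s)\,ds, \qquad y>0 .
\]
Then $w\in C^\infty(\R\times(0,\infty))$, its trace on $\{y=0\}$ is $Eg$, and it obeys the two standard bounds $\int_0^\infty\!\int_\R|\nabla w|^2\,dx\,dy \le C\,[Eg]_{H^{1/2}(\R)}^2$ and $\sup_{y>0}\|w(\cdot,y)\|_{L^2(\R)}\le \|Eg\|_{L^2(\R)}$, where $[\,\cdot\,]_{H^{1/2}(\R)}$ denotes the Gagliardo seminorm: the first follows by expressing the derivatives of $w$ against the difference $Eg(s)-Eg(x)$ (legitimate since $\int_\R\varphi' = 0$) and applying the Cauchy--Schwarz inequality to recover the seminorm, and the second is Young's inequality for convolutions since $\tfrac1y\varphi(\cdot/y)$ has unit $L^1$--norm. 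Restricting to the strip $\R\times(0,l)$ these give $\|w\|_{H^1(\R\times(0,l))}^2 \le C\,\|g\|_{H^{1/2}((-d,d))}^2$.

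Finally I would localize. Choose $\beta\in C^\infty(\R)$ with $\beta\equiv 1$ on $(-\infty,l/2]$, $\beta\equiv 0$ on $[3l/4,\infty)$ and $\|\beta'\|_\infty\le C/l$, and put $v := \beta(y)\,w(x,y)$ on $(-d/2,d/2)\times(0,l)$. Because $\beta(0)=1$ and $Eg = g$ on $(-d,d)\supset(-d/2,d/2)$, the trace of $v$ on $(-d/2,d/2)\times\{0\}$ is $g$; because $\beta\equiv 0$ on $[3l/4,l]$, $v$ vanishes on the neighborhood $(-d/2,d/2)\times(3l/4,l)$ of $\{y=l\}$; and from $\nabla v = \beta\,\nabla w + (\beta' w)\,e_y$ together with the two bounds above,
\[
\|v\|_{H^1((-d/2,d/2)\times(0,l))}^2 \le C\Big(\|\nabla w\|_{L^2(\R\times(0,l))}^2 + (1+l^{-2})\|w\|_{L^2(\R\times(0,l))}^2\Big) \le C\,\|g\|_{H^{1/2}((-d,d))}^2 ,
\]
and summing over the two components of $g$ finishes the argument.

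I do not expect a genuine obstacle here: the content is classical. The only points that deserve a word of care are the boundedness of the $H^{1/2}$--extension from the interval to the line --- this is precisely what allows one to reproduce $g$ on all of $(-d,d)$ while only needing to control $v$ on the thinner strip $(-d/2,d/2)$ --- and the verification of the two displayed estimates for $w$, both of which are carried out in \cite{LeoniBook}. One may also bypass the extension step altogether by using a mollifier of sufficiently small aperture, $w(x,y):=\int_\R\varphi(t)\,g(x-\lambda yt)\,dt$ with $\lambda<d/(2l)$, so that only values of $g$ on $(-d,d)$ are ever sampled for $x\in(-d/2,d/2)$, $y\in(0,l)$.
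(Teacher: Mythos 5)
Your proposal is correct and is essentially the paper's argument: both rest on Gagliardo's mollification extension $w(x,y)=\tfrac1y\int\phi(\tfrac{x-t}{y})g(t)\,dt$, the cancellation $\int\phi'=0$ to insert $g(t)-g(x)$ and recover the $H^{1/2}$ seminorm, an $L^2$ bound via Young's inequality, and a smooth cutoff in $y$ to enforce vanishing near $y=l$. The only cosmetic difference is that you first extend $g$ to $H^{1/2}(\R)$, whereas the paper skips the extension by working on the thinner strip $(-d/2,d/2)$ with $y$ restricted to $(0,\min\{d,l\}/2)$ so the mollifier only samples $(-d,d)$ --- precisely the ``small aperture'' alternative you note at the end.
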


\begin{proof}
With an abuse of notation we treat $g$ as a function of $t\in (-d,d).$ Let $\eta:=\min\{d,l\}>0.$
Let $\phi \in C^{\infty}_c((-1,1))$ be a standard mollifier. For $(x,y)\in (-d/2,d/2)\times (0,\eta/2)$ we define $$\bar v(x,y) := \frac{1}{y}\int_{-d}^{d}\phi((x-t)/y)g(t)\ dt.$$ Since $\phi$ is even, $\int_{-d}^d \partial\phi((x-t)/y) \ dt = 0,$
so 
\begin{align*}
\frac{\partial \bar v}{\partial x}(x,y) = &  \frac{1}{(y)^2}\int_{-d}^{d}\partial\phi((x-t)/y)g(t)\ dt \\
= & \frac{1}{(y)^2}\int_{-d}^{d}\partial\phi((x-t)/y)[g(t)-g(x)]\ dt.
\end{align*} 
Consequently,
$$\Big| \frac{\partial \bar v}{\partial x}(x,y) \Big| \leq  \frac{C}{(y)^2}\int_{B(x,y)}|g(t)-g(x)| \ dt.$$

By H\"older's inequality and Fubini's Theorem
\begin{align*}
\int_{(-d/2,d/2)\times (0,\eta/2)} & \Big|\frac{\partial \bar v}{\partial x}(x,y)\Big|^2 \ d(x,y)  \\
\leq & C\int_{(-d/2,d/2)\times (0,d/2)}\frac{1}{(y)^4}\Big(\int_{B(x,y)}|g(t)-g(x)| \ dt\Big)^2 \ d(x,y)\\
\leq &  C\int_{(-d/2,d/2)\times (0,d/2)}\frac{1}{(y)^3}\int_{B(x,y)}|g(t)-g(x)|^2 \ dt \ d(x,y)\\
 \leq &  C\int_{(-d/2,d/2)}\int_{(-d,d)}|g(t)-g(x)|^2\Big(\int_{|t-x|}^\infty\frac{1}{(y)^3} \ dy \Big)\ dt \ dx \\
 = &  C\int_{(-d/2,d/2)}\int_{(-d,d)}\Big(\frac{|g(t)-g(x)|}{|t-x|}\Big)^2 \ dt \ dx \\
 \leq & C |g|_{H^{1/2}((-d,d)\times \{0\})}.
\end{align*}
Similarly, we compute
\begin{align*}
\frac{\partial \bar v}{\partial y}(x,y) = & \int_{-d}^{d}\frac{\partial}{\partial y}\Big( \frac{1}{y}\phi((x-t)/y)\Big)g(t)\ dt \\
= & \int_{-d}^{d}\frac{\partial}{\partial y}\Big( \frac{1}{y}\phi((x-t)/y)\Big)[g(t)-g(x)]\ dt, \\
\end{align*}
where in the last inequality we have used that for $(x,y)\in (-d/2,d/2)\times(0,\eta/2),$
\begin{equation}
 0  = \frac{\partial}{\partial y}(1) = \frac{\partial}{\partial y} \Big( \int_{-d}^{d} \frac{1}{y}\phi((x-t)/y) \ dt \Big)=  \int_{-d}^{d}\frac{\partial}{\partial y}\Big( \frac{1}{y}\phi((x-t)/y)\Big)\  dt. \nonumber
\end{equation}
We bound 
\begin{align*}
\Big|\frac{\partial}{\partial y}\Big( \frac{1}{y}\phi((x-t)/y)\Big)\Big| = & \Big|  -\frac{1}{(y)^2}\phi((x-t)/y) +\frac{(x-t)}{(y)^3}\partial\phi((x-t)/y)\Big| \\
\leq & \frac{C}{(y)^2} ,
\end{align*}
where we have used the fact that $|x-t|\leq y$ in the domain of integration. Thus we have
$$\Big|\frac{\partial \bar v}{\partial y}(x,y) \Big|\leq \frac{C}{(y)^2}\int_{B(x,y)}\|g(t)-g(x)\| \ dt, $$
and we may proceed as before. We conclude that
$$ \int_{(-d/2,d/2)\times (0,\eta/2)}  \Big\|\nabla \bar v(z)\Big\|^2 \ dz \leq C |g|_{H^{1/2}((-d,d)\times \{ 0 \})}.$$

Lastly, it remains to truncate the function, while preserving bounds. Let $\psi : \R \to [0,1]$ be a smooth function such that $\psi(t) = \chi_{(-\infty,1/2]}(t)$ for all $t \not \in [1/4,1].$ For any $\alpha>0,$ we define $v_\alpha(x,y) := \psi(y/\alpha)\bar v(x,y)$.
It is clear that  $$\int_{(-d/2,d/2)\times (0,\eta/2)}  \Big|\frac{\partial}{\partial x} v_\alpha(z)\Big|^2 \ dz \leq C |g|_{H^{1/2}((-d,d)\times\{0\})}$$ still holds.

We compute 
\begin{align*}
\int_{(-d/2,d/2)\times (0,\eta/2)}  \Big\|\frac{\partial}{\partial y} v_\alpha(z)\Big\|^2 \ dz \leq & C\int_{(-d/2,d/2)\times (0,\eta/2)}  \Big\|\frac{\partial}{\partial y} \bar v(z)\Big\|^2 \ dz \\
& + \frac{C}{\alpha^2}\int_{(-d/2,d/2)\times (0,\eta/2)}  \Big|\bar v(z)\Big|^2 \ dz.
\end{align*}
Using Fubini's/Tonelli's Theorem, it is straightforward to show that 
\begin{equation} \label{l2bdd}
\int_{(-d/2,d/2)\times (0,\eta/2)}  \Big|\bar v(z)\Big|^2 \ dz \leq C\|g\|_{L^2(-d,d)\times \{0\}}^2.
\end{equation}  Consequently, for any $\alpha>0$, we have $$\int_{(-d/2,d/2)\times (0,\eta/2)}  \Big\|\nabla v_\alpha(z)\Big\|^2 \ dz \leq  C_\alpha \|g\|_{H^{1/2}((-d,d)\times\{0\})}^2.$$ Choosing $\alpha$ sufficiently small based on the geometry of the domain, we conclude the lemma by setting $v = v_\alpha$. Note the desired $L^2$ bound follows from inequality (\ref{l2bdd}).
\end{proof}

\subsection*{Proof of Step II}
In this section, we use similar methods of proof as in the paper of Conti and Schweizer (Proposition 5.5 of \cite{ContiSchweizer-Linear}). We first prove a lemma relating energies to a geodesic distance similar to that of the Modica-Mortola functional.
In what follows, given a curve $\gamma,$ we interchangeably use $\gamma$ as the set and parameterization representing the curve.
\begin{lem}\label{energylines}
Let $g_\epsilon$ be defined as in (\ref{def:g_eps}). For any $\delta>0$ there is $h(\delta)>0$ such that if $\gamma$ is a $C^1$-curve with length at least $\epsilon,$ range in $(-d,d)\times (-l,l)$, and $\int_\gamma g_\epsilon \ d\mathcal{H}^1\leq h(\delta),$ then either $|c(x,y)-\mu_1|\leq \delta$ or $|c(x,y)-\mu_0|\leq \delta$ for all $(x,y)\in \gamma.$ 
\end{lem}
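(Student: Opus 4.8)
The plan is to run the classical Modica--Mortola ``geodesic potential'' argument along the curve, exploiting the length hypothesis $\mathcal H^1(\gamma)\ge\epsilon$. First I would discard the nonnegative elastic term in $g_\epsilon$ (see (\ref{def:g_eps})), keeping only $g_\epsilon\ge\frac1\epsilon f(c)+\epsilon\|\nabla c\|^2$. Parametrize $\gamma$ by arclength, $\gamma\colon[0,T]\to(-d,d)\times(-l,l)$ with $T\ge\epsilon$, and set $\phi:=c\circ\gamma$; in the smooth regime in which this lemma is applied (so that restrictions to $\gamma$ and the line integral $\int_\gamma g_\epsilon\,d\mathcal H^1$ are meaningful and $\phi'=\nabla c(\gamma)\cdot\gamma'$, hence $|\phi'|\le\|\nabla c(\gamma)\|$), one has $\phi\in H^1(0,T)$, so $\phi$ is continuous with values in $[0,1]$. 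Introduce $\Phi(s):=\int_0^s 2\sqrt{f(\tau)}\,d\tau$, which is continuous and strictly increasing on $[0,1]$ since $f\ge0$ vanishes only at $\mu_0,\mu_1$, and the ``geodesic distance to the wells'' $\Psi(s):=\min\{|\Phi(s)-\Phi(\mu_0)|,\,|\Phi(s)-\Phi(\mu_1)|\}$, which is continuous, $1$-Lipschitz with respect to $\Phi$ (i.e.\ $|\Psi(a)-\Psi(b)|\le|\Phi(a)-\Phi(b)|$), and vanishes exactly at $\mu_0$ and $\mu_1$.

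The two elementary estimates I would extract are: (i) by the arithmetic--geometric mean inequality, $\frac1\epsilon f(\phi)+\epsilon|\phi'|^2\ge 2\sqrt{f(\phi)}\,|\phi'|=|(\Phi\circ\phi)'|$ a.e.\ on $[0,T]$, so $\int_\gamma g_\epsilon\,d\mathcal H^1\ge\int_{t_1}^{t_2}2\sqrt{f(\phi)}\,|\phi'|\,dt\ge|\Phi(\phi(t_2))-\Phi(\phi(t_1))|$ for all $t_1\le t_2$, and combining with the $1$-Lipschitz property of $\Psi$, $\int_\gamma g_\epsilon\,d\mathcal H^1\ge|\Psi(\phi(t_2))-\Psi(\phi(t_1))|$; and (ii) since $g_\epsilon\ge\frac1\epsilon f(c)\ge0$ pointwise, $\int_\gamma g_\epsilon\,d\mathcal H^1\ge\frac1\epsilon\int_J f(\phi)\,dt$ for every measurable $J\subset[0,T]$. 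I would then fix the $\delta$-dependent constants. Assuming (harmlessly, by monotonicity of the assertion in $\delta$) that $\delta$ is small enough that the closed $\delta$-neighbourhoods of $\mu_0$ and $\mu_1$ are disjoint and do not cover $[0,1]$, put $\rho(\delta):=\min\{\Psi(s):\dist(s,\{\mu_0,\mu_1\})\ge\delta\}>0$ and $m(\delta):=\min\{f(s):\Psi(s)\ge\rho(\delta)/2\}>0$ (minima over nonempty compact sets avoiding $\{\mu_0,\mu_1\}$), and set $h(\delta):=\tfrac12\min\{m(\delta),\rho(\delta)/2\}$.

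The proof then proceeds by contraposition. Suppose $\int_\gamma g_\epsilon\,d\mathcal H^1\le h(\delta)$ but the conclusion fails; since $\phi$ is continuous, $[0,T]$ is connected, and the two closed $\delta$-neighbourhoods are disjoint, there is $t^*$ with $\dist(\phi(t^*),\{\mu_0,\mu_1\})>\delta$, hence $\Psi(\phi(t^*))\ge\rho(\delta)$. Let $J=(\sigma_1,\sigma_2)$ be the connected component of $\{t\in[0,T]:\Psi(\phi(t))>\rho(\delta)/2\}$ containing $t^*$. If $|J|\ge\epsilon$ then $f(\phi)\ge m(\delta)$ on $J$, so (ii) yields $\int_\gamma g_\epsilon\,d\mathcal H^1\ge\frac1\epsilon|J|\,m(\delta)\ge m(\delta)$. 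If $|J|<\epsilon$ then, since $T\ge\epsilon$, $J$ is a proper subinterval of $[0,T]$, so at least one endpoint $\sigma$ of $J$ lies in $(0,T)$, and by continuity and maximality $\Psi(\phi(\sigma))=\rho(\delta)/2$; applying (i) to the pair $\{t^*,\sigma\}$ gives $\int_\gamma g_\epsilon\,d\mathcal H^1\ge\Psi(\phi(t^*))-\Psi(\phi(\sigma))\ge\rho(\delta)/2$. In either case $\int_\gamma g_\epsilon\,d\mathcal H^1\ge\min\{m(\delta),\rho(\delta)/2\}>h(\delta)$, a contradiction. This gives the pointwise conclusion, and since $\phi([0,T])$ is connected it in fact lies entirely within distance $\delta$ of a single well.

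The one load-bearing step, and the only place the hypothesis $\mathcal H^1(\gamma)\ge\epsilon$ is used, is the dichotomy on $|J|$: the length bound is precisely what makes the ``cheap'' failure mode --- $c$ hovering just outside a $\delta$-neighbourhood of a well along all of $\gamma$, so that no $\Phi$-excursion occurs --- nonetheless cost $\gtrsim\frac1\epsilon\cdot\epsilon\cdot m(\delta)$ via estimate (ii), while any other failure forces a genuine change in $\Psi\circ\phi$ detected by estimate (i). The remaining points (reparametrization by arclength, absolute continuity of $\phi$, the chain rule $(\Phi\circ\phi)'=2\sqrt{f(\phi)}\,\phi'$ for $\phi$ absolutely continuous and $\Phi$ Lipschitz, and positivity of $\rho(\delta)$ and $m(\delta)$) are routine.
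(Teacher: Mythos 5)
Your proposal is correct and follows essentially the same route as the paper's proof: both rest on the two estimates that the length hypothesis plus the $\frac{1}{\epsilon}f(c)$ term forces $c$ to approach a well somewhere on $\gamma$, and that the arithmetic--geometric mean inequality turns the Modica--Mortola part of $g_\epsilon$ into a geodesic-distance lower bound that propagates closeness to that well along the curve. The only differences are organizational (you argue by contraposition with a dichotomy on the length of the ``bad'' arc, and measure proximity to the wells in the $\Phi$-metric rather than the Euclidean one), which do not change the substance of the argument.
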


\begin{proof}

Consider the geodesic distance between points on the interval $I := [0,1]$ defined by 
\begin{equation}\label{def:geoDist}
d_I(s,s') := \inf\Big\{\int_0^1 \sqrt{f(\psi(t))}\|\nabla \psi(t)\|\ dt: \psi\in C^1(I, I), \psi(0)=s,\psi(1)=s' \Big\}. 
\end{equation}

Let $$h_0  :=\inf \{d_I(s,s'):s,s'\in I, |s-\mu_0|\leq \delta/2, |s'-\mu_0|\geq \delta\},$$ and similarly, $$h_1  :=\inf \{d_I(s,s'):s,s'\in I, |s-\mu_1|\leq \delta/2, |s'-\mu_1|\geq \delta\}.$$  Lastly, we define $$h_2  := \inf\{f(s):x\in I,|s-\mu_1|\geq \delta/2, |s-\mu_0|\geq \delta/2\}. $$ Let $h(\delta) :=\frac{1}{2}\min\{h_0,h_1,h_2\}.$

Assuming now that $\int_\gamma g_\epsilon \ d\mathcal{H}^1 <h(\delta)$ and $\mathcal{H}^1(\gamma)>\epsilon$, we have $$h_2 >  \int_\gamma g_\epsilon \ d\mathcal{H}^1 \geq \frac{1}{\epsilon} \inf\{f(c(x,y)):(x,y)\in \gamma\}\mathcal{H}^1(\gamma)\geq \inf\{f(c(x,y)):(x,y)\in \gamma\},$$ which implies there must be a point $(\bar x,\bar y)\in \gamma$ such that either $|c(\bar x,\bar y)-\mu_1|\leq \delta/2$ or $|c(\bar x,\bar y)-\mu_0|\leq \delta/2.$ Without loss of generality, assume that the latter holds.

By (\ref{def:g_eps}), we compute $$h_0>\int_\gamma g_\epsilon \ d\mathcal{H}^1 \geq \int_\gamma \sqrt{f(c)}\|\nabla c\|\ d\mathcal{H}^1  \geq \int_0^1 \sqrt{f(c\circ \bar \gamma)}\|\nabla (c\circ \bar \gamma)\| \ dt\geq d_I(c(x,y),c(\bar x,\bar y)),$$ where $(x,y)\in \gamma$ and $\bar \gamma$ is a curve contained in $\gamma$ connecting $(x,y)$ and $(\bar x,\bar y)$. By definition of $h_0,$ this implies $|c(x,y)-\mu_0|\leq \delta$ for all $(x,y)\in \gamma$ as desired.

\end{proof}

As in the proof of Proposition \ref{FonsecaTartarProp}, via a diagonalization argument, for any domain $ (-d,d)\times (-l,l)$, we may find sequences $\bar \epsilon_i\searrow 0$, $\bar u_i\to \bar u_{e_y}$ in $H^1((-d,d)\times (-l,l),\R^2)$, and $\bar c_i\to \bar c_{e_y}$ in $L^2((-d,d)\times (-l,l))$ such that 
\begin{equation}\label{lim:fixedSeqonBox}
\lim\limits_{i\to \infty} I_{{\bar \epsilon_i}}[\bar u_i,\bar c_i,(-d,d)\times (-l,l)] = 2d\k(e_y).
\end{equation} However with respect to gamma convergence, the sequence $\epsilon_i$ is given a priori. Hence the critical result is the following:

\begin{thm}\label{boxseq}
Assume (\ref{coercivity}), (\ref{detcond2}), and (\ref{fcoeffCond}) hold. Let $\epsilon_i\to 0,$ $l>0$, and $d>0$. There exist sequences $u_i\to \bar u_{e_y}$ and $c_i\to \bar c_{e_y}$ such that 
\begin{equation}\label{limsupineq}
\lim\limits_{i\to \infty} I_{{ \epsilon_i}}[ u_i, c_i,(-d/2,d/2)\times(-l,l)] = d\k(e_y).
\end{equation} 

\end{thm}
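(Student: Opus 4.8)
Since $\mathcal{F}_{e_y}(d/2,l)=d\k(e_y)$ by Proposition \ref{FonsecaTartarProp}, the bound $\liminf_i I_{\epsilon_i}[u_i,c_i,(-d/2,d/2)\times(-l,l)]\ge d\k(e_y)$ holds automatically for any admissible sequences, directly from the definition of $\mathcal{F}_{e_y}$ (or from Theorem \ref{thm:liminf}, since $e(\bar u_{e_y})=\bar c_{e_y}e_0$ forces $I_0[\bar u_{e_y},\bar c_{e_y},(-d/2,d/2)\times(-l,l)]=d\k(e_y)$). So the whole content is to build recovery sequences for the \emph{a priori prescribed} $\epsilon_i$, which is exactly the issue flagged in the text preceding the statement. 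Write $B:=(-d/2,d/2)\times(-l,l)$. As in the proof of Proposition \ref{FonsecaTartarProp}, a diagonal argument first yields a ``master'' sequence $\bar\epsilon_j\searrow0$, $\bar u_j\to\bar u_{e_y}$ in $H^1(B,\R^2)$, $\bar c_j\to\bar c_{e_y}$ in $L^2(B)$, with $(\bar u_j,\bar c_j)\in H^1(B,\R^2)\times H^1(B,[0,1])$ and $I_{\bar\epsilon_j}[\bar u_j,\bar c_j,B]\to d\k(e_y)$.

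The plan is to transfer this to the given $\epsilon_i$ by rescaling. For each $i$ pick $j(i)\to\infty$ with $\bar\epsilon_{j(i)}\le\epsilon_i$ and set $\alpha_i:=\epsilon_i/\bar\epsilon_{j(i)}\ge1$; on $B$ define $u_i(z):=\alpha_i\,\bar u_{j(i)}(z/\alpha_i)$ and $c_i(z):=\bar c_{j(i)}(z/\alpha_i)$, which is legitimate because $B\subseteq\alpha_iB$. Since $\bar u_{e_y}$ is positively $1$-homogeneous while $\bar c_{e_y}$ and $\nabla\bar u_{e_y}$ are positively $0$-homogeneous, a change of variables (with $\alpha_i$ bounded, see below) gives $u_i\to\bar u_{e_y}$ in $H^1(B,\R^2)$ and $c_i\to\bar c_{e_y}$ in $L^2(B)$. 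The pointwise scaling identity $I_{\alpha\epsilon}[\alpha v(\cdot/\alpha),w(\cdot/\alpha),\alpha\Omega]=\alpha I_\epsilon[v,w,\Omega]$ underlying Proposition \ref{FonsecaTartarProp} then gives
\[
I_{\epsilon_i}[u_i,c_i,B]=\alpha_i\,I_{\bar\epsilon_{j(i)}}\!\bigl[\bar u_{j(i)},\bar c_{j(i)},(-\tfrac{d}{2\alpha_i},\tfrac{d}{2\alpha_i})\times(-\tfrac{l}{\alpha_i},\tfrac{l}{\alpha_i})\bigr]\le\alpha_i\,G_{j(i)}\!\bigl(\tfrac{d}{2\alpha_i}\bigr),
\]
where $G_j(a):=I_{\bar\epsilon_j}[\bar u_j,\bar c_j,(-a,a)\times(-l,l)]$ and we enlarged the vertical side using positivity of the integrand.

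It remains to evaluate $G_{j(i)}(d/(2\alpha_i))$. First, $G_j(a)\to2a\k(e_y)$ for each fixed $a\in(0,d/2]$: split $B$ into $(-a,a)\times(-l,l)$ and the two side rectangles $(-d/2,-a)\times(-l,l)$, $(a,d/2)\times(-l,l)$, and apply Theorem \ref{thm:liminf} on each piece, whose limit $(\bar u_{e_y},\bar c_{e_y})$ has jumpset of length $2a$, $d/2-a$, $d/2-a$ there; this gives $\liminf_jG_j(a)\ge2a\k(e_y)$ and $\liminf_j$ of the side contribution $\ge(d-2a)\k(e_y)$, and since the total tends to $d\k(e_y)$ each $\liminf$ equals its lower bound and each piece in fact converges. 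Each $G_j$ is nondecreasing on $(0,d/2]$ and $a\mapsto2a\k(e_y)$ is continuous, so a standard $\varepsilon$-net argument for monotone functions upgrades this to uniform convergence on every compact subinterval of $(0,d/2]$. Thus, \emph{provided} $\alpha_i$ stays bounded (so that $d/(2\alpha_i)$ stays in a fixed compact subinterval of $(0,d/2]$), we get $G_{j(i)}(d/(2\alpha_i))=\tfrac{d}{\alpha_i}\k(e_y)+o(1)$, hence $I_{\epsilon_i}[u_i,c_i,B]\le d\k(e_y)+\alpha_i\,o(1)=d\k(e_y)+o(1)$, and together with the lower bound this yields (\ref{limsupineq}).

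The main obstacle is precisely the boundedness of $\alpha_i=\epsilon_i/\bar\epsilon_{j(i)}$: the master null sequence $\bar\epsilon_j$ comes from an abstract diagonalization and need not be fine relative to the prescribed $\epsilon_i$, so a careless choice of $j(i)$ forces $\alpha_i\to\infty$ and $d/(2\alpha_i)\to0$, where uniform convergence of $G_j$ fails. I would handle this in one of two ways. The clean way is to use a master sequence that is asymptotically uniformly distributed in $x$, i.e.\ with $G_j(a)\le2a\k(e_y)+o(1)$ uniformly in $a$; in the normalized frame $e_0=e_x\otimes e_y+e_y\otimes e_x$ of the Remark in Section \ref{secmathprelim} one can in fact take the explicit one-dimensional transition $\bar c_j(x,y)=\varphi(y/\bar\epsilon_j)$ with $\varphi$ a near-optimal profile from $\mu_0$ to $\mu_1$, together with the displacement $\bar u_j$ with $e(\bar u_j)=\bar c_je_0$ exactly (available since $(e_0)_{11}=0$), which makes the elastic term vanish and lets one run the construction directly with parameter $\epsilon_i$. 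The robust way, not relying on the normalization, is to tile $(-d/2,d/2)$ by a large number $N_i\to\infty$ of horizontally shifted, rescaled thin copies of the master sequence — each of width $d/N_i$ and effective parameter comparable to $\epsilon_i$ — and glue consecutive copies along cut-lines on which $c$ is close to a well, located by Lemma \ref{energylines} together with an averaging argument in which the elastic energy on the cut is negligible; the cost of each of the $N_i-1$ gluings is then controlled by the sharp trace bound Theorem \ref{H1/2bound} and the extension Lemma \ref{traceext}, so the total gluing cost is $o(1)$. This second route — which reuses precisely the machinery assembled for Step I — is where the genuine technical work lies.
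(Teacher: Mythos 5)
You correctly isolate the real obstruction---the sequence $\epsilon_i$ is prescribed, so the diagonal sequence $\bar\epsilon_j$ cannot in general be matched to it with $\alpha_i=\epsilon_i/\bar\epsilon_{j(i)}$ bounded---and your main rescaling argument indeed does not close for exactly the reason you flag. The paper resolves this by going in the opposite direction: it picks $j(i)$ so that $\alpha_i\to\infty$ on purpose, so that after rescaling (and after first using Theorem \ref{boxenergy} to make the functions affine/constant near the horizontal boundaries, hence extendable to $(-d,d)\times\R$) the energy on the very wide strip $(-\alpha_i d,\alpha_i d)\times\R$ is $2\alpha_i d\,\k(e_y)+\alpha_i o(1)$; a pigeonhole over the $\lfloor\alpha_i\rfloor$ disjoint width-$2d$ windows then selects one window with energy at most $2d\,\k(e_y)+o(1)$, with no gluing at all. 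The remaining work is to truncate the unbounded vertical direction (counting ``transitions'' via the functions $f_0,f_1$ and Lemma \ref{energylines}) and to identify the limit of the selected window as a translate of $(\bar u_{e_y},\bar c_{e_y})$ modulo skew-affine shifts, via Theorems \ref{thm:compactness} and \ref{struct}. Your second, ``robust'' fix is close in spirit but is not what the paper does: gluing $N_i-1$ vertical cuts would need a vertical-cut analogue of Theorem \ref{H1/2bound} and control of the mismatch of the skew-affine parameters across cuts, none of which is set up; as written it is a sketch, not a proof. Your first, ``clean'' fix, by contrast, is a genuinely valid and much shorter route, and you undersell it: no normalization of $e_0$ is needed. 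For any one-dimensional profile $c(x,y)=\varphi(y/\epsilon_i)$ the rank-one connection (\ref{R1connection}) lets you integrate $\nabla u=\mu_0e_0+\tfrac{c-\mu_0}{\mu_1-\mu_0}\,a\otimes e_y$ exactly (this field is curl-free since it depends only on $y$ in its second column), giving $e(u)=ce_0$ pointwise and hence identically zero elastic energy; combined with the Modica--Mortola lower bound obtained by discarding the elastic term, this yields $\k(e_y)=2\int_{\mu_0}^{\mu_1}\sqrt{f(s)}\,ds$ and shows the explicit profile run directly with the prescribed $\epsilon_i$ is a recovery sequence. If you adopt that route, state the identification of $\k(e_y)$ explicitly, since the theorem asserts the limit equals $d\,\k(e_y)$ and not merely the one-dimensional upper bound; and commit to one argument rather than offering alternatives.
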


\begin{proof}
For notational convenience, we drop the subscript $e_y.$ Let $\bar \epsilon_i\searrow 0$, $\bar u_i\to \bar u$, and $\bar c_i\to \bar c$ be the sequences prior to the theorem statement for the domain $(-4d,4d)\times (-l,l)$. By Theorem \ref{boxenergy}, we find sequences (not relabeled) $\{\bar c_i\}\subset L^2((-d,d)\times (-l,l)) $ and $\{\bar u_i\}\subset H^1((-d,d)\times (-l,l),\R^2) $ such that on the upper and lower boundaries of $(-d,d)\times (-l,l),$ $\bar c_i =\bar c$ and $\bar u_i = \bar u + \chi_{y>0}w_i$, where $w_i$ is a skew affine function, with $$\lim\limits_{i\to \infty} I_{{\bar \epsilon_i}}[\bar u_i,\bar c_i,(-d,d)\times (-l,l)] = 2d\k(e_y).$$ Thus we extend $\bar c_i$ and $\bar u_i$ to $(-d,d)\times \R$ via constants or affine functions.

For each $i\in \N,$ we let $j(i)\in \N$ be the smallest number such that $j(i)>i$ and $\bar \epsilon_{j(i)} <\epsilon_i/i.$ We then rescale our sequences as follows:
$$\bar v_i(x,y) := \frac{\epsilon_i}{\bar \epsilon_{j(i)}}\bar u_i\Big(\frac{\bar \epsilon_{j(i)}}{\epsilon_i}(x,y)\Big), \ \ \bar b_i(x,y) := \bar c_i\Big(\frac{\bar \epsilon_{j(i)}}{\epsilon_i}(x,y)\Big).$$ Letting $\alpha_i :=  \frac{\epsilon_i}{\bar \epsilon_{j(i)}}$ and using a change of variables, we find $$ I_{{\epsilon_i}}[\bar v_i,\bar b_i,(-\alpha_i d,\alpha_i d)\times \R] = 2 \alpha_i d \k(e_y) +\alpha_i \eta_{j(i)}, $$
where $\eta_i := I_{{\bar \epsilon_i}}[\bar u_i,\bar c_i,(-d,d)\times (-l,l)] -2d \k(e_y).$ Thus 
\begin{align*}
\sum_{k =0 }^{\lfloor \alpha_i\rfloor - 1} I_{{\epsilon_i}}[\bar v_i,\bar b_i,(2k - \lfloor \alpha_i\rfloor)  d,(2(k+1) - \lfloor \alpha_i\rfloor) d)\times \R] = &  I_{{\epsilon_i}}[\bar v_i,\bar b_i,(-\lfloor \alpha_i\rfloor d,\lfloor \alpha_i\rfloor d)\times \R] \\
\leq & 2 \alpha_i d \k(e_y) +\alpha_i \eta_{j(i)}, 
\end{align*}
which implies there is some $k_0 \in  \{- \lfloor \alpha_i\rfloor,- \lfloor \alpha_i\rfloor + 2  , \ldots, \lfloor \alpha_i\rfloor  -2\}$ such that 

$$ I_{{\epsilon_i}}[\bar v_i,\bar b_i,(k_0d,(k_0+2)d)\times \R] \leq 2 \frac{\alpha_i}{\lfloor \alpha_i\rfloor} d \k(e_y) +\frac{\alpha_i}{\lfloor \alpha_i\rfloor} \eta_{j(i)}.$$
Translating the sequences, we assume $k_0 = -1.$ Taking the $\limsup$ of the previous inequality, we find 
\begin{equation}\label{extdomenergy}
\limsup\limits_{i\to \infty} I_{{\epsilon_i}}[\bar v_i,\bar b_i,(-d,d)\times \R] \leq 2 d \k(e_y),
\end{equation} as $\frac{\alpha_i}{\lfloor \alpha_i\rfloor} \to 1.$ Note further that associated to each sequence $\{\bar v_i,\bar b_i\}$ is some $L_i>0$ such that $\bar v_i$ is affine and $\bar b_i$ is constant in each of the connected regions specified by the inequality $|y|>L_i$. From this last fact, we are able to conclude that for each $i\in \N$, $I_{{\epsilon_i}}[\bar v_i,\bar b_i,(-d,d)\times \R]\geq Cd$ (see (\ref{transitionenergy})).

We now work to truncate the domain under consideration from $(-d,d)\times \R$ to $(-d,d)\times (-L,L)$ for some $L>0$ such that 
\begin{align}\label{desiredbound}
Cd & \leq\liminf\limits_{i\to \infty} I_{{\epsilon_i}}[u_i,c_i,(-d,d)\times (-L,L)] \\
& \leq \limsup\limits_{i\to \infty} I_{{\epsilon_i}}[u_i,c_i,(-d,d)\times (-L,L)] \leq 2 d \k(e_y) ,\nonumber
\end{align} where $u_i$ and $c_i$ are constructed from modifications of $\bar v_i$ and $\bar b_i$ and $u_i \to \bar u$ in $H^1((-d,d)\times (-L,L),\R^2)$, and $c_i \to \bar c$ in $L^2((-d,d)\times (-L,L))$.

In this direction, we let $\delta := (\frac{1}{2}-\mu_0)/2$ and define the functions 
\begin{equation}\label{def:f0}
f_0 (y) := \mathcal{L}^1(\{x\in (-d,d):|\bar b_i(x,y) - \mu_0|\leq \delta\}) 
\end{equation} and 
\begin{equation}\label{def:f1}
f_1 (y) := \mathcal{L}^1(\{x\in (-d,d):|\bar b_i(x,y) - \mu_1|\leq \delta\}). 
\end{equation}
For large $y,$ $f_0(y)= 0$ and $f_1(y)= 2d.$ An analogous situation holds for $y<<0.$ We utilize these functions to isolate an interval where (\ref{desiredbound}) will hold up to translation.

Note that the set of $y$ satisfying $f_0(y)+f_1(y)< 3d/2$ has Lebesgue measure less than $C_1\epsilon_i\leq C_1.$ To see this, note that if $f_0(y)+f_1(y)< 3d/2$, then \begin{equation}\label{condeqn}
\mathcal{L}^1(\{x\in (-d,d):|\bar b_i(x,y) - \mu_1|>\delta \text{ and }|\bar b_i(x,y) -\mu_0|>\delta \})>d/2.
\end{equation} This implies 
\begin{align*}
\frac{d}{2}\mathcal{L}^1(\{y: & \text{ inequality }(\ref{condeqn})\text{ holds}\}) \\
\leq & \int_{\R} \mathcal{L}^1(\{x\in (-d,d):|\bar b_i(x,y) - \mu_1|>\delta \text{ and }|\bar b_i(x,y) -\mu_0|>\delta \}) \ dy \\
\leq& C \int_{(-d,d)\times \R}f(\bar b_i) \ dz \leq  C_1\epsilon_i,
\end{align*}
where we have used that $f\geq 0$ with $f(c) =0$ if and only if $c=\mu_0$ or $c=\mu_1$.

We further note that the set on which both $f_0>0$ and $f_1>0$ is bounded in measure by a constant $C_2$. To see this, we use (\ref{def:g_eps}) to write 
\begin{equation}\label{eqn:c1ineq}
C_1\geq I_{{\epsilon_i}}[\bar v_i,\bar b_i,(-d,d)\times \R]  = \int_\R \int_{(-d,d)} g_{\epsilon_i}(x,y) \ dx \ dy.
\end{equation}

By Lemma \ref{energylines}, if $\int_{(-d,d)}g_{\epsilon_i}(x,y) \ dx \leq h(\delta),$ then either $f_0(y)$ or $f_1(y)$ is $0.$ Thus, we are concerned in bounding $$\mathcal{L}^1\left(\left\{y\in \R:\int_{(-d,d)}g_{\epsilon_i}(x,y) \ dx > h(\delta)\right\}\right).$$ But by Markov's Inequality and (\ref{eqn:c1ineq}), $$\mathcal{L}^1\left(\left\{y\in \R:\int_{(-d,d)}g_{\epsilon_i}(x,y) \ dx > h(\delta)\right\}\right)\leq C_1/h(\delta).$$ Thus $$\mathcal{L}^1(\{y: f_0(y)+f_1(y)<3d/2\}\cup\{y: f_0(y)>0, f_1(y)>0\})\leq C_1+C_1/h(\delta).$$ 
It follows that we may write $\R$ as the disjoint union of the three sets $ M$, $N$, and $ O,$ where
\begin{itemize}
\item $f_0 = 0$ and $f_1 >3d/2$ on $M$.
\item $f_0>3d/2$ and $f_1=0$ on $N$.
\item The remaining portion of $\R$ is $O$ with $\mathcal{L}^1(O) \leq C_1+C_1/h(\delta).$
\end{itemize}
Suppose $y_0$ and $y_1$ are such that $f_0(y_0)>3d/2$ and $f_1(y_1)>3d/2.$ Then by (\ref{def:f0}) and (\ref{def:f1}), the set$$E:=\{x\in (-d,d):|\bar b_i(x,y_0) - \mu_0|\leq \delta\}\cap\{x\in (-d,d):|\bar b_i(x,y_1) - \mu_1|\leq \delta\} $$ satisfies $$\mathcal{L}^1(E)>d.$$
Assuming without loss of generality $y_0<y_1$, we compute 
\begin{align}\label{transitionenergy}
\begin{split}
I_{{\epsilon_i}}[\bar v_i,\bar b_i,(-d,d)\times (y_0,y_1)]\geq& \int_E\int_{y_0}^{y_1}g_{\epsilon_i}\ dy\ dx \\
\geq& \inf\{d_I(c,c'): |c-\mu_0|\leq \delta,|c'-\mu_1|\leq \delta\}d=C_\delta d,
\end{split}
\end{align}
where $d_I$ is the geodesic distance from Lemma \ref{energylines} (see (\ref{def:geoDist})) and $C_\delta>0$.

If we refer to an interval $(y_0,y_1)$ as above as a \textbf{transition}, the energy bound (\ref{extdomenergy}) implies there are at most $J$ (independent of $i$) transitions.

Note that $ (-\infty,-L_i)\subset N\subset \ (-\infty,L_i]$ by (\ref{def:f0}) and (\ref{def:f1}) and the comment following these definitions. Hence we can define $$\bar y := \inf\{y: (y-\zeta,y)\cap N = \emptyset\}\geq -L_i > -\infty,$$ where $\zeta>2(C_1+C_1/h(\delta))$ (the constant makes sure at most half the interval is in $O$). For some $L>0,$ we consider the interval $  (\bar y -2L,\bar y -2\zeta),$ and divide the interval into segments of length $\zeta$ (assuming $2L$ is divisible by $\zeta$). 
Each interval intersects $N$ by definition of $\bar y.$ If an interval also intersects $M$, it contains a transition. Thus for $2L > (J+2)\zeta,$ there must be at least one such interval, $(\bar z,\bar z+\zeta),$ which does not intersect $M$, as the number of transitions must be less than $J$. 
Consequently, in this interval, for at least half the $y\in (\bar z,\bar z+\zeta),$ $f_1(y) = 0.$ We note this implies 
\begin{equation}\label{essinfest1}
\essinf_{x\in (-d,d)} |\bar b_i(x,y)-\mu_1|\geq \delta
\end{equation} for at least half the $y\in (\bar z,\bar z+\zeta).$ Similarly, we have 
\begin{equation}\label{essinfest2}
\essinf_{x\in (-d,d)} |\bar b_i(x,y)-\mu_0|\geq \delta
\end{equation} for at least half the $y\in (\bar y - \zeta,\bar y).$ 

We consequently define $$v_i(x,y) := \bar v_i(x,y-L+\bar y), \ \ b_i(x,y) := \bar b_i(x,y-L+\bar y),$$ for $(x,y)\in (-d,d)\times (-L,L)=:U_L.$
By construction, there must be at least one transition on the interval $(-L,L)$, and consequently, these sequences satisfy (\ref{desiredbound}). It remains to prove convergence.

We define $$\eta_i := \inf\{\|v_i -u_0\|_{H^1(U_L)}+\|b_i-c_0\|_{L^2(U_L)}: (u_0,c_0)\in \mathcal{G}\}, $$ where 
\begin{align*}
\mathcal{G} := \{& (u_0,c_0) \in H^{1}(U_L,\R^2)\times L^2(U_L): \ u_0(x,y) = \bar u (x,y-a)+S(x,y)^T+r,  \\
& c_0(x,y) = \bar c(x,y-a), \text{for all }(x,y)\in U_L\text{, and} \\
& a\in (-L+\zeta/2,L-\zeta/2), S\in \R^{2\times 2}_{\text{skew}},r\in \R^2\} .
\end{align*}
We claim $\eta_i\to 0.$ If not, there is a subsequence $\{\eta_{i_k}\}$ bounded away from $0.$ Considering the compactness Theorem \ref{thm:compactness}, we have that $v_{i_k} \to v$ in $\in H^1(U_{L}, \R^2)$ and  $b_{i_k} \to b\in BV(U_L,\{\mu_0,\mu_1\})$ in $L^2(U_L)$, with $e(v) = be_0.$ Without loss of generality, we may assume that $b_{i_k}\to b$ pointwise $a.e.$, and consequently, $b$ satisfies (\ref{essinfest1}) and (\ref{essinfest2}). By Theorem \ref{struct}, we have that $v$ only has horizontal or vertical interfaces. By the essential infimum estimates (\ref{essinfest1}) and (\ref{essinfest2}), there are no vertical interfaces. By the energy bounds (\ref{desiredbound}), $v$ can have at most one horizontal interface transition. Once again by the essential infimum estimates, $b(x,y) = \mu_1$ for $y>L-\zeta/2$ and $b(x,y) = \mu_0$ for $y<-L+\zeta/2$, else we contradict $L^2$ convergence results. We conclude that $b = \bar c(x,\cdot-a)$ for some $a\in (-L+\zeta/2,L-\zeta/2).$ It follows $(v,b)\in \mathcal{G},$ which then contradicts the assumption $\liminf_k\eta_{i_k}>0.$ 

We conclude that $\eta_i\to 0.$ Translating functions and shifting by affine functions with skew gradient, we find $u_i:(-d,d) \times (-\zeta/2,\zeta/2)\to \R^2$ and $c_i:(-d,d)\times (-\zeta/2,\zeta/2)\to [0,1]$ satisfying the conclusion of the theorem with $l = \zeta/2$. Applying Theorem \ref{boxenergy}, we obtain the theorem's conclusion for $l = \zeta/2$ where $u_i$ and $c_i$ are affine or constant (respectively) on the upper and lower boundaries. Extending these functions to be affine or constant, the theorem's conclusion holds on $(-d/2,d/2)\times \R,$ which may then be truncated to the desired domain $(-d/2,d/2)\times(-l,l).$ 

\end{proof}
\subsection*{Proof of Step III}

\begin{proof}[Proof of Theorem \ref{thm:charInterface}]
Apply Theorem \ref{boxseq} to the domain $(-2d,2d)\times (-l,l)$. Subsequently, apply Theorem \ref{boxenergy} to conclude the result.
\end{proof}

\section{Limsup bound}\label{seclimsup}

We outline our plan to prove the $\limsup$ bound on a strictly star-shaped Lipschitz domain $\Omega$. In essence, we wish to put boxes around the interfaces, and interpolate between the sides of the boxes parallel to the interface by low energy sequences while maintaining regularity of the functions. More explicitly:

\begin{itemize}
\item Given $u$ and $c$ for which $I_0[u,c,\Omega]$ is finite, we rescale the functions utilizing the fact that the domain is strictly star-shaped. This reduces the problem to the case of finitely many interfaces.
\item Suppose without loss of generality that some interface has normal $e_y.$ Around this interface, we intersect the domain with a box of small width in the normal direction. For a given sequence $\epsilon_i$, in each box, we use the characterization of the interfacial energy to construct a sequence of functions such that $I_{\epsilon_i}[u_i,c_i,(-d,d)\times(-l,l)]\to 2d\k(e_y),$ and  both $ u_i $ is affine and $c_i$ is constant on the boundaries of the box parallel to the interface.
\item We use the previous step to construct a low energy sequence which is equal to $u$ plus a ``small" skew affine function outside of the boxes and in the box is equal to the low energy sequence with affine boundary conditions.
\end{itemize}

\begin{thm} \label{limsupthm} (see also Proposition 5.1 of \cite{ContiSchweizer-Linear}) Assume (\ref{coercivity}), (\ref{detcond2}), and (\ref{fcoeffCond}) hold. Suppose $\epsilon_i\to 0$ and that $\Omega$ is an open, strictly star-shaped domain with Lipschitz continuous boundary. For $u\in H^1(\Omega;\R^2)$ and $c\in BV(\Omega;\{\mu_0,\mu_1\})$ with $I_0[u,c,\Omega]<\infty$, there are sequences $u_i\to_{H^1} u$ and $c_i \to_{L^2} c$ such that $$\limsup_{i \to \infty} I_{{ \epsilon_i}}[ u_i, c_i,\Omega] \leq I_0[u,c,\Omega].$$
\end{thm}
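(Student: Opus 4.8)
The plan is to follow the three-bullet outline preceding the statement, using the interfacial-energy characterization of Theorem \ref{thm:charInterface} as the engine. First I would reduce to the case of finitely many interfaces. Since $I_0[u,c,\Omega]<\infty$, Theorem \ref{struct} tells us that $J_c$ consists of countably many disjoint segments, each with normal $\pm e_x$ or $\pm e_y$ (using the normalization $e_0 = e_x\otimes e_y + e_y\otimes e_x$), each extending to $\partial\Omega$, and $\nabla u$ constant on each connected component of $\Omega\setminus J_c$. Using that $\Omega$ is strictly star-shaped (say about the origin), I would rescale: set $u^\lambda(z):=\lambda u(z/\lambda)$, $c^\lambda(z):=c(z/\lambda)$ for $\lambda<1$ slightly less than $1$, so that $(u^\lambda,c^\lambda)$ lives on $\lambda\Omega\csubset\Omega$, has $e(u^\lambda)=c^\lambda e_0$, and $I_0[u^\lambda,c^\lambda,\Omega]$ is, after discarding all but finitely many interfaces of $c^\lambda$ inside $\Omega$ (the tail contributes $\le\theta\mathcal H^1$-mass, hence $\le C\theta$ energy, by finiteness of $\sum_j\mathcal H^1(\mathcal I_j)$), within $C\theta$ of $I_0[u,c,\Omega]$; one then diagonalizes in $\theta$ (equivalently $\lambda\to1$) at the end. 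After this reduction $c$ is $BV$ with a \emph{finite} union of straight interfaces, pairwise disjoint and away from $\partial(\lambda\Omega)$ except at their endpoints.

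Next, the local construction. Fix one interface $\mathcal I$, WLOG with normal $e_y$, lying on the line $\{y=y_0\}$ with $x$ ranging over an interval $X$. Because $\nabla u$ is constant on either side, near $\mathcal I$ the pair $(u,c)$ is, up to an affine change of coordinates and a rigid shift, exactly $(\bar u_{e_y},\bar c_{e_y})$; more precisely $u = \bar u_{e_y}(\cdot - z_0) + (\text{affine with skew gradient})$ there. Choose $l>0$ small enough that the slabs $R_{\mathcal I}:=X\times(y_0-l,y_0+l)$ around the various interfaces are pairwise disjoint and compactly contained in $\Omega$. On each such slab apply Theorem \ref{thm:charInterface} (with $d$ matching the length of $X$, after covering $X$ by finitely many sub-boxes) to get sequences $u_i^{\mathcal I}\to\bar u_{e_y}$ in $H^1$, $c_i^{\mathcal I}\to\bar c_{e_y}$ in $L^2$ with $I_{\epsilon_i}[u_i^{\mathcal I},c_i^{\mathcal I},R_{\mathcal I}]\to \mathcal H^1(\mathcal I)\,\k(e_y)$, \emph{and} — crucially — $c_i^{\mathcal I}=\bar c_{e_y}$, $u_i^{\mathcal I}=\bar u_{e_y}+\chi_{y<y_0}(R_{\phi_i}(\cdot)^T+a_i)$ in neighborhoods of the two faces $\{y=y_0\pm l\}$, with $|\phi_i|+|a_i|\to0$. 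Outside all the slabs I set $c_i=c$ and $u_i = u + (\text{small skew-affine corrections, one constant per connected component of }\Omega\setminus\bigcup R_{\mathcal I})$, chosen so that the definitions match continuously across each face $\{y=y_0\pm l\}$: this is possible precisely because $e(u)=ce_0$ forces $u$ to be affine-with-skew-gradient on each component, so the mismatch at a face is itself skew-affine and can be absorbed; one needs to check the finitely many matching conditions form a consistent (lower-triangular, since interfaces extend to the boundary and the "adjacency graph" of components is a tree by Theorem \ref{struct}) system, solvable with corrections of size $O(\max_i(|\phi_i|+|a_i|))\to0$. Then $u_i\to u$ in $H^1(\Omega,\R^2)$ and $c_i\to c$ in $L^2(\Omega)$, and since $f(c_i)=0$, $\nabla c_i=0$, and $e(u_i)-c_i e_0 = 0$ on the complement of the slabs (the skew corrections don't affect $e(u_i)$), the energy there vanishes. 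Adding the slab contributions,
\begin{align*}
\limsup_{i\to\infty} I_{\epsilon_i}[u_i,c_i,\Omega]
&= \sum_{\mathcal I} \lim_{i\to\infty} I_{\epsilon_i}[u_i^{\mathcal I},c_i^{\mathcal I},R_{\mathcal I}]
= \sum_{\mathcal I}\mathcal H^1(\mathcal I)\,\k(\nu_{\mathcal I})
= \int_{J_c}\k(\nu)\,d\mathcal H^1 = I_0[u,c,\Omega].
\end{align*}

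The main obstacle I anticipate is the \emph{global gluing} step: ensuring that the finitely many box-interpolants, each defined only up to a small skew-affine ambiguity with its own $\phi_i,a_i$, can be patched to a single $H^1$ function on all of $\Omega$ without creating jumps along the shared faces or along $\partial\Omega$. The point-counting here relies on the rigidity structure (Theorem \ref{struct}): because every interface is a straight segment with endpoints on $\partial\Omega$ and the interfaces are pairwise disjoint, the connected components of $\Omega\setminus\bigcup\mathcal I$ and their adjacency form a tree, so one can orient the matching from the "outermost" component inward and solve the corrections recursively, each step only adding a skew-affine function of controlled norm. A secondary technical nuisance is that Theorem \ref{thm:charInterface} is stated for a box $(-d/2,d/2)\times(-l,l)$ with a single straight interface of a fixed normal, so an interface of length $\neq d$ or an interface not perfectly centered requires covering $X$ by finitely many translated/rescaled copies and invoking the scaling relation \eqref{scalingRelation}; and one must verify the diagonalization over $\theta$ (the number of retained interfaces, the slab half-width $l$, and the patching error) can be carried out jointly, which is routine given the quantitative bounds above. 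The mass-constraint refinement (Theorem \ref{maintheoremconstraint}) is deferred to Section \ref{secmassconstraint} and not needed here.
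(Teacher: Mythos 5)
Your overall architecture is the paper's: rescale via strict star-shapedness to reduce to finitely many interfaces, surround each interface by a box, invoke Theorem \ref{thm:charInterface} to get low-energy sequences with affine/constant data on the faces parallel to the interface, and glue across the connected components of $\Omega\setminus J_c$ by solving for skew-affine corrections along the tree induced by Theorem \ref{struct} (the paper organizes this as a partial order on the components). However, there are two concrete gaps in the reduction step that would derail the execution as written.

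First, your rescaling goes the wrong way: $u^\lambda(z)=\lambda u(z/\lambda)$ with $\lambda<1$ is defined only on $\lambda\Omega\subsetneq\Omega$, so it cannot serve as the target of a recovery sequence on $\Omega$. The paper sets $u_\theta(z)=\tfrac1\theta u(\theta z)$, $c_\theta(z)=c(\theta z)$ with $\theta<1$, which is defined on $\tfrac1\theta\Omega\supset\Omega$; restricted to $\Omega$ this automatically has only finitely many interfaces (an infinite tail would force interfaces of vanishing length to accumulate on $\partial\Omega\cap\theta\bar\Omega=\emptyset$), so your ``discard the tail'' step is both unnecessary and ill-posed --- you cannot delete interfaces of $c$ while preserving $e(u)=ce_0$ without simultaneously modifying $u$, and the modified pair no longer converges to $(u,c)$. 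Second, and more seriously, your slabs $X\times(y_0-l,y_0+l)$ cannot be compactly contained in $\Omega$ (each interface has its endpoints on $\partial\Omega$ by Theorem \ref{struct}), and Theorem \ref{thm:charInterface} gives you matching data \emph{only} on the two faces parallel to the interface, not on the lateral sides. If any portion of a lateral side lies inside $\Omega$, you have no way to match $u_i^{\mathcal I}$ to $u$ there. The correct rescaling is precisely what fixes this: the endpoints $\tfrac1\theta x_j^\pm$ of the blown-up interface lie outside $\bar\Omega$, so one chooses $\sigma$ with $\{\tfrac1\theta x_j^\pm\}\times(\tfrac1\theta y_j-\sigma,\tfrac1\theta y_j+\sigma)\cap\bar\Omega=\emptyset$, making the lateral sides of each box exit the domain entirely and leaving only the parallel faces to interpolate across. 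With these two corrections (and dropping the unnecessary sub-box covering of $X$ --- one box of width equal to the interface length suffices), your argument coincides with the paper's, including the final diagonalization in $\theta\to1$.
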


\begin{proof}
Assume without loss of generality that $\Omega$ is star-shaped about $0.$
Given $\theta\in(0,1),$ we rescale $u$ and $c$ to define $$u_\theta(x,y) := \frac{1}{\theta}u(\theta (x,y)), \ \ c_\theta(x,y) := c(\theta (x,y)), \ \  \text{ for }  (x,y)\in \Omega.$$
We prove
\begin{equation}\label{covtheta}
\limsup_{i \to \infty} I_{{ \epsilon_i}}[ u_i, c_i,\Omega]\leq \frac{1}{\theta}I_0[u,c,\Omega],
\end{equation}
for sequences $u_i\to u_{\theta}$ in $H^1(\Omega,\R^2)$ and $c_i \to c_{\theta}$ in $L^2(\Omega).$

Supposing we prove this for $u_\theta$ and $c_\theta,$ we may consider a sequence $\theta_k\to 1$ and find subsequences $\{u_{i,k}\}_{i}$ and $\{c_{i,k}\}_{i}$ such that satisfying inequality (\ref{covtheta}). Taking the $\limsup$ with respect to $k$ of the above inequality, we may apply a diagonalization argument to conclude the theorem.

Thus it remains to prove (\ref{covtheta}) for fixed $\theta.$ By Theorem \ref{struct}, $J_c = \cup_j S_j,$ where each $S_i$ is a connected segment parallel to one of the axes. Thus $J_{c_\theta} = \cup_j(\Omega \cap \frac{1}{\theta}S_j) =:\cup_j S_{j,\theta}.$ We note that $\text{dist}(S_{j,\theta},S_{m,\theta})>0$ for $j\neq m$ as $\bar S_j$ and $\bar S_m$ can only intersect at endpoints, and thus the strict star-shapedness implies, $\bar S_{j,\theta}\cap \bar S_{m,\theta} = \emptyset.$ 

Furthermore, we have that $S_{j,\theta} = \emptyset$ for all but finitely many of the $j$. Supposing not, we may find a sequence $z_k $ such that $z_k\in S_{j_k}\cap \theta \Omega$ for a strictly increasing sequence $\{j_k\}_k.$ As $\mathcal{H}^1(J_c)<\infty,$ $\mathcal{H}^1(S_{j_k})\to 0.$ It follows that up to a subsequence $z_k\to z_0\in \partial \Omega.$ But by choice of $z_k,$ we have $z_0\in \theta \bar \Omega.$ This is a contradiction as $\partial \Omega \cap \theta \bar \Omega = \emptyset$ by strict star-shapedness. 

From here on we only consider $j$ for which $S_{j,\theta}$ is nonempty. Consider a horizontal segment, $S_j = (x_j^-,x_j^+)\times \{y_j\}.$ By strict star-shapedness $\frac{1}{\theta}(x_j^{\pm}\times\{y_j\})\not \in \bar \Omega.$ Thus we may find $\sigma>0$ such that $\{\frac{1}{\theta}x_j^{\pm}\}\times (\frac{1}{\theta}y_j-\sigma,\frac{1}{\theta}y_j+\sigma)\cap \bar \Omega = \emptyset.$ We let $R_{j} := (\frac{1}{\theta}x_j^{-},\frac{1}{\theta}x_j^{+})\times (\frac{1}{\theta}y_j-\sigma,\frac{1}{\theta}y_j+\sigma).$ Similarly, we define $R_{j}$ for vertical interfaces. For $\sigma$ sufficiently small, the sets $R_j\cap \Omega$ are disjoint.

Associated to each $R_{j}$ is unit normal $\nu_j$ and, as given by Theorem \ref{thm:charInterface}, there is a sequence with $\{u_i^j,c_i^j\}_{i}$ with $u_i^j = u + \chi_{\nu_j \cdot (x,y)>0}(R_{\phi_i^j}(x,y)^T+a_i)$ and $c_i^j = c$ on the boundaries of the box parallel to the interface and energy bounds as given by (\ref{limsupineq}). We now seek to define sequences $u_i$ and $c_i$.

We divide $\Omega \setminus (\cup_j S_{j,\theta})$ into connected components $\{\Omega_k\}.$ We induce a partially ordered system ($\prec$) on $\{\Omega_k\}$ to make it into a downward directed set. Up to reordering, let $\Omega_1$ be a connected component with boundary only touching one interface.  $\Omega_1$ is defined to be the minimal element in the POS ($\prec$). By star-shapedness, between every point of $\Omega_1$ and $\Omega_k$, there is a unique minimal sequence of connected components, $\{\Omega_{k_i}\}_{i=1}^n$, $k_1 = 1$ and $k_n = k$, through which a continuous path in $\Omega$ must travel to move between the points. We say $\Omega_{k_i}\prec \Omega_{k_{i+1}}$. Looking at all paths induces the desired POS ($\prec$). Note, we have that each $\Omega_k$ has a unique element $\Omega_{k'}$ which is the greatest element less than it. Letting $S_{j,\theta}$ be the interface separating the domains $\Omega_k$ and $\Omega_{k'}.$ We define $\phi^k_i : = \phi^j_i$ and likewise for $a_i^j$. Without loss of generality, we have that $\nu_j$ points from $\Omega_{k'}$ towards $\Omega_k$. Note we also treat ($\prec$) as a partial order on $\{k\}.$ With this, we define
$$u_i(x,y)  := \begin{cases} 
      u_i^j(x,y) +\sum_{n\prec k}(R_{\phi_i^n}(x,y)^T+a_i^n)& (x,y)\in R_j \cap \Omega, \Omega_k\cap R_j \neq \emptyset, \Omega_{k'}\cap R_j \neq \emptyset  \\
      u(x,y)+ \sum_{n\preceq k}(R_{\phi_i^n}(x,y)^T+a_i^n) & \text{ not in the previous case, and } (x,y)\in \Omega_k,
   \end{cases}   
$$
$$
   c_i  := \begin{cases} 
      c_i^j(x,y) & (x,y)\in R_j \cap \Omega, \\
      c & otherwise. 
   \end{cases}
 $$
It follows that 
\begin{align*}
\limsup_{i \to \infty} I_{{ \epsilon_i}}[ u_i, c_i,\Omega] \leq& \sum_j \limsup_{i\to \infty} I_{{ \epsilon_i}}[ u_i^j, c_i^j,R_j] \\
\leq& \frac{1}{\theta} \sum_j \k(\nu_{i})\mathcal{H}^1(S_i) \leq  \frac{1}{\theta}I_0[u,c,\Omega],
\end{align*}
proving the desired inequality (\ref{covtheta}). Convergence of the subsequences to $u_\theta$ and $c_\theta$ follows from convergence on the boxes and decay of $\phi_i^j$ and $a_i^j$ to $0$ (see Theorem \ref{boxenergy}).

\end{proof}

\section{Mass Constraint}\label{secmassconstraint}

We now treat the case of $\Gamma$-convergence under the restriction of a mass constraint. Recall that we let $\{m_\epsilon \}_{\epsilon>0}\subset [0,1]$ be a net converging to $m_0\in [\mu_0,\mu_1]$ as $\epsilon\to 0$, and we wish to consider $\Gamma$-convergence restricting $c_\epsilon$ to satisfy $\intbar_\Omega c_\epsilon = m_\epsilon.$ Obviously, the $\liminf$ bound still holds, and thus for given $\epsilon_i\to 0,$ it remains to show that we may construct a sequence obtaining the limit. We write $m_i$ for $m_{\epsilon_i}.$ We break this into cases depending on whether $m_0 =\mu_0,$ $m_0 =\mu_1,$ or $m_0\in (\mu_0,\mu_1)$. In each case, we need to find some way to fluctuate the mass of the functions $c_\epsilon.$ To do this, we will emulate the proof of $\limsup$ bound for the Modica-Mortola functional to construct a low energy perturbation of $c_\epsilon$ as previously constructed (see \cite{Leoni2013-SummerSchool}, \cite{ModicaMortola}).

\begin{proof}[Proof of Theorem \ref{maintheoremconstraint}]
Consider $(u,c)$ such that $I_0[u,c,\Omega]<\infty$ and $\intbar_\Omega c = m_0.$ We construct minimizing sequences for different cases.

\noindent \textbf{Case 1, $\boldsymbol{m_0 = \mu_0}$ or $\boldsymbol{m_0 = \mu_1}$:} Without loss of generality, we treat the case that $m_0 = \mu_0.$ Note that in this case, the function $c = \mu_0$ and $e(u) = \mu_0 e_0$. Thus if $m_i = \mu_0,$ we may simply choose $c_i = c.$ Consequently, in the following construction, we assume that $m_{i}\neq \mu_0$ for all $i.$

We consider the energy functionals given by $$\bar I_{\epsilon_i}[c',\Omega] :=I_{\epsilon_i}[u,c',\Omega] = \int_{\Omega}\frac{1}{\epsilon}\Big( f(c') + \|(c'-\mu_0)e_0\|^2 \Big)+\epsilon\|\nabla c'\|^2 \ dz.$$ We condense notation by defining $W(s):=f(s) + \|(s-\mu_0)e_0\|^2.$ Note this is a single-well potential. 

\begin{adjustwidth}{1cm}{}

\noindent \textbf{Subcase 1, $\boldsymbol{\mu_0<m_i<\mu_1}$:} We define the sequence $\{E_\eta\}_{\epsilon_{z_0}>\eta>0}$ by $E_\eta:= B(z_0,\eta)^C$, for any fixed $z_0\in \Omega$ such that $B(z_0,2\epsilon_{z_0})\subset \Omega$. Define $E_i : = E_{\eta_i} = B(z_0,\eta_i)^C$, where $\eta_i>0$ is such that 
\begin{equation}\label{def:etaSeq}
\mu_0\mathcal{L}^2(E_{\eta_i}\cap \Omega)+\mu_1\mathcal{L}^2(E_{\eta_i}^C\cap \Omega) = m_i \mathcal{L}^2(\Omega).
\end{equation} This assumes that $m_i$ is sufficiently close to $\mu_0$ (as given by some relation to $\epsilon_{z_0}$), which we do.

Define $$\phi_i (s):= \int_{\mu_0}^{s}\frac{\epsilon_i}{\sqrt{\epsilon_i +W(r) }} \ dr.$$ Then, $|\phi_i(\mu_1)|\leq \epsilon_i^{1/2}.$
We note that $\phi_i$ is strictly increasing with differentiable inverse $\phi_{i}^{-1}:[0,\phi_i(\mu_1)]\to [\mu_0,\mu_1]$ satisfying $$\frac{d}{dt}\phi_i^{-1}(t) = \frac{\sqrt{\epsilon_i+W(\phi_i^{-1}(t))}}{\epsilon_i},$$ by the inverse function theorem. Extend $\phi_i^{-1}$ by constants at the boundary of $[0,\phi_i(\mu_1)]$.
We define 
\begin{equation}\label{def:g0MM}
g_0(t):=
\begin{cases} 
\mu_0 ,& t\leq 0, \\
\mu_1 ,& t> 0, 
\end{cases}
\end{equation}
and $$v_s(z) : = \phi_i^{-1}(d_{E_i}(z)+s),$$ where $$d_{E_i}(z):= \begin{cases} -d(z,\partial E_i)  & \text{if } z\in E_i, \\ \phantom{+} d(z,\partial E_i) & \text{otherwise,} \end{cases}$$ is the signed distance function of $E_i$ (negative in $E_i$).

We now wish to choose $s$ such that the $\intbar_{\Omega} v_{s_i} \ dz = m_i.$ To do this, we apply the Mean Value theorem to the function $s\mapsto \intbar_{\Omega} v_s \ dz$. We compute
\begin{align*}
\intbar_{\Omega} \phi_i^{-1}(d_{E_i}(z)) \ dz \leq & \intbar_{\Omega} g_0(d_{E_i}(z)) \ dz = m_i, \\
\intbar_{\Omega} \phi_i^{-1}(d_{E_i}(z)+\phi_i(\mu_1)) \ dz \geq &\intbar_{\Omega} g_0(d_{E_i}(z)) \ dz = m_i.&
\end{align*}
Thus, for some $s_i\in [0,\phi_i(\mu_1)],$ we have $\intbar_\Omega v_{s_i} \ dz = m_i.$ Define $c_i:=v_{s_i}.$ We now wish to perform a precise estimate on $c_i.$ Since $d_{E_i}$ is Lipschitz continuous and $\|\nabla d_{E_i} (z)\| = 1$ for a.e. $z\in \R^2\sm \partial E_i$, (see \cite{EvansGariepy}, \cite{Leoni2013-SummerSchool}, \cite{Bellettini-LectureMeanCurv}) we can apply the coarea formula (see \cite{EvansGariepy},\cite{LeoniBook}) to obtain
\begin{align*}
\bar I_{\epsilon_i}[c_i,\Omega] = & \int_\Omega \frac{1}{\epsilon_i}W(\phi_i^{-1}(d_{E_i}(z)+s_i))+\epsilon_i\|\nabla (\phi_i^{-1}(d_{E_i}(z)+s_i))\|^2 \\
= &\int_{-s_i}^{\eta_i} \Big(\frac{1}{\epsilon_i}W(\phi_i^{-1}(r+s_i))+\epsilon_i|(\phi_i^{-1})'(r+s_i)|^2 \Big)  \mathcal{H}^1(\{z\in \Omega: d_{E_i}(z) = r\}) \ dr \\
\leq & \sup_{-s_i < t<\eta_i} \mathcal{H}^1(\{z\in \Omega: d_{E_i}(z) = t\}) \int_{0}^{\eta_i+s_i} \frac{1}{\epsilon_i}W(\phi_i^{-1}(r))+\epsilon_i|(\phi_i^{-1})'(r)|^2 \  dr \\
\leq & \sup_{-s_i < t<\eta_i} \mathcal{H}^1(\{z \in \Omega: d_{E_i}(z) = t\}) \int_{0}^{\eta_i+s_i} \frac{\epsilon_i+W(\phi_i^{-1}(r))}{\epsilon_i}+\epsilon_i|(\phi_i^{-1})'(r)|^2 \  dr \\
= &\sup_{-s_i < t<\eta_i} \mathcal{H}^1(\{z\in \Omega: d_{E_i}(z) = t\}) \int_{0}^{\eta_i+s_i} 2\sqrt{\epsilon_i + W(\phi_i^{-1}(r))}|(\phi_i^{-1})'(r)| \  dr \\
\leq  & \sup_{-s_i < t<\eta_i} \mathcal{H}^1(\{z\in \R^2: d_{E_i}(z) = t\}) \int_{0}^{1} 2\sqrt{\epsilon_i + W(s)}\ ds \\
\leq & C(\epsilon_i^{1/2}+\eta_i)\int_{0}^{1} 2\sqrt{\epsilon_i + W(s)}\ ds \to  0
\end{align*}
as $i\to \infty.$
We now check convergence in $L^2(\Omega)$ by the same means:
\begin{align*}
\int_\Omega |c_i-\mu_0|^2 =& \int_\Omega |\phi_i^{-1}(d_{E_i}(z)+s_i) - \mu_0|^2 \\
=&  \int_{-s_i}^{\eta_i} |\phi_i^{-1}(r+s_i)-\mu_0|^2 \ \mathcal{H}^1(\{z\in \Omega: d_{E_i}(z) = r\}) \ dr \\
\leq & (|s_i|+|\eta_i|)\sup_{-s_i < t<\eta_i} \mathcal{H}^1(\{z\in \R^2: d_{E_i}(z) = t\}) \to  0
\end{align*}
With this, we have proven $\Gamma$-convergence.

\noindent \textbf{Subcase 2, $\boldsymbol{m_i<\mu_0}$:} The proof is predominantly the same as the previous subcase. We comment on the changes. To define $\eta_i,$ consider $\mu_0\mathcal{L}^2(E_\eta\cap \Omega) = m_i$ in place of (\ref{def:etaSeq}). We use $0$ in place of $\mu_1$ in the definition of (\ref{def:g0MM}). 

\end{adjustwidth}
\noindent \textbf{Case 2, $\boldsymbol{m_0 \in (\mu_0,\mu_1)}$:} In this case, we know that $J_c\neq \emptyset,$ and further, there must be a point $z_0 \in \Omega$ such that $B(z_0,2\epsilon_{z_0})\subset \Omega$ and $B(z_0,2\epsilon_{z_0})\cap J_{c} = \emptyset.$ Thus by the construction in Theorem \ref{limsupthm}, we can find a low energy sequence $\{(u_i,c_i)\}_i$ converging to $(u,c)$ such that $c_i|_{B(z_0,\epsilon_{z_0})}e_0 = e(u)|_{B(z_0,\epsilon_{z_0})} = \mu_0 e_0$ for all $i$. Likewise, we can find $z_1 \in \partial \Omega$ such that $c_i|_{B(z_1,\epsilon_{z_1})}e_0 = e(u)|_{B(z_1,\epsilon_{z_1})} = \mu_1 e_0$ with $B(z_1,2\epsilon_{z_1})\subset \Omega$ and $B(z_1,2\epsilon_{z_1})\cap J_{c} = \emptyset.$

We note that $m_i\to m_0$, and $\intbar_\Omega c_i \ dz \to m_0.$ Supposing $\intbar_\Omega c_i \ dz <m_i,$ we perform the same procedure from the preceding section on $B(z_0,\epsilon_{z_0})$ to construct $c_{\phi,i}:B(z_0,\epsilon_{z_0})\cap \Omega \to [0,1]$ (utilizing $E_\eta = B(z_0,\eta)^C$) with mass 
$$\intbar_{B(z_0,\epsilon_{z_0}) } c_{\phi,i} \ dz  = \frac{m_i\mathcal{L}^2(\Omega)- \int_\Omega c_i \ dz}{\mathcal{L}^2(B(z_0,\epsilon_{z_0}))}+\mu_0,$$ 
(which makes sense for sufficiently large $i$) and 

$$\lim_i I_{\epsilon_i}[c_{\phi,i},u_i,B(z_0,\epsilon_{z_0})] = 0.$$

 We define
$$ \bar c_i (z)  := \begin{cases}
c_i & \text{if } z\in \Omega \setminus B(z_0,\epsilon_{z_0}), \\
c_{\phi,i} & \text{if } z\in \Omega \cap B(z_0,\epsilon_{z_0}),
\end{cases}$$
which satisfies $\bar c_i\to c$ in $L^2(\Omega)$ and is directly shown to satisfy $\intbar_\Omega \bar c_i \ dz = m_i$.

We note by Theorem \ref{thm:liminf} the sequence $(u_i,c_i)$ is of minimal energy on every Lipschitz subset of $\Omega$, and it follows $I_{\epsilon_i}[u_i,c_i, B(z_0,\epsilon_{z_0})\cap \Omega]\to 0$. Thus, 
$$\lim_{i\to \infty} I_{\epsilon_i}[u_i,c_i, \Omega] = \lim_{i\to \infty} I_{\epsilon_i}[u_i,\bar c_i, \Omega].$$

Similarly, if $\intbar_\Omega c_i \ dz >m_i,$ we would perform the analogous calculation about $z_1$ to decrease the mass of $c_i.$ Consequently, we have shown the desired $\Gamma$-convergence result.

\end{proof}

\section*{Acknowledgments} This paper is part of the author's Ph.D. thesis at Carnegie Mellon University under the direction of Irene Fonseca and Giovanni Leoni. The author is deeply indebted to these two for their many hours spent watching the author scribble at a board and for expert guidance on many mathematical topics. Furthermore, the author is thankful for their many suggestions as to the organization of the paper and spotting a plethora of typos, which greatly improved the paper. 
The author was partially supported by National Science Foundation Grants DMS 1906238 and DMS 1714098.

\bibliographystyle{amsplain}
\bibliography{kstinsonGammaBattery}


\end{document}